\newtheorem{theorem}{Theorem}[section]
\newtheorem{lemma}[theorem]{Lemma}
\newtheorem{proposition}[theorem]{Proposition}
\newtheorem{corollary}[theorem]{Corollary}
\newtheorem{definition}[theorem]{Definition}
\newtheorem{assumption}[theorem]{Assumption}
\begin{document}
\setlength\arraycolsep{2pt}
\title{A generalized mean-reverting equation and applications}
\author{Nicolas MARIE}
\address{Laboratoire Modal'X, Universit\'e Paris-Ouest, 200 Avenue de la R\'epublique, 92000 Nanterre, France}
\email{nmarie@u-paris10.fr}
\keywords{Stochastic differential equations, rough paths, large deviation principle, mean-reversion, Gaussian processes}
\date{}
\maketitle
%


%
\begin{abstract}
Consider a mean-reverting equation, generalized in the sense it is driven by a $1$-dimensional centered Gaussian process with H\"older continuous paths on $[0,T]$ ($T > 0$). Taking that equation in rough paths sense only gives local existence of the solution because the non-explosion condition is not satisfied in general. Under natural assumptions, by using specific methods, we show the global existence and uniqueness of the solution, its integrability, the continuity and differentiability of the associated It\^o map, and we provide an $L^p$-converging approximation with a rate of convergence ($p\geqslant 1$). The regularity of the It\^o map ensures a large deviation principle, and the existence of a density with respect to Lebesgue's measure, for the solution of that generalized mean-reverting equation. Finally, we study a generalized mean-reverting pharmacokinetic model.
\end{abstract}
\tableofcontents
\noindent
\textbf{MSC2010 :} 60H10.
\\
\\
\textbf{Acknowledgements.} Many thanks to my Ph.D. supervisor Laure Coutin for her precious help and advices. This work was supported by A.N.R. Masterie.
%


%
\section{Introduction}
\noindent
Let $W$ be a $1$-dimensional centered Gaussian process with $\alpha$-H\"older continuous paths on $[0,T]$ ($T > 0$ and $\alpha\in ]0,1]$).
\\
\\
Consider the stochastic differential equation (SDE) :
\begin{equation}\label{EDS3}
X_t  =
x_0 +
\int_{0}^{t}
\left(a - bX_u\right)du +
\sigma\int_{0}^{t}
X_{u}^{\beta}dW_u
\textrm{ ; }
t\in [0,T]
\end{equation}
where, $x_0 > 0$ is a deterministic initial condition, $a,b,\sigma\geqslant 0$ are deterministic constants and $\beta$ satisfies the following assumption :
%


%
\begin{assumption}\label{Eass}
The exponent $\beta$ satisfies : $\beta\in ]1-\alpha,1]$.
\end{assumption}
\noindent
When the driving signal is a standard Brownian motion, equation (\ref{EDS3}) taken in the sense of It\^o, is used in many applications. For example, it is studied and applied in finance by J-P. Fouque et al. in \cite{FFK10} for $\beta\in [1/2,1[$. The cornerstone of their approach is the Markov property of diffusion processes. In particular, their proof of the global existence and uniqueness of the solution at Appendix A involves S. Karlin and H.M. Taylor \cite{KT81}, Lemma 6.1(ii). Still for $\beta\in [1/2,1[$, the convergence of the Euler approximation is proved by X. Mao et al. in \cite{MTY06} and \cite{WMC08}. For $\beta\geqslant 1$, equation (\ref{EDS3}) is studied by F. Wu et al. in \cite{WMC08}. Recently, in \cite{NTD11}, N. Tien Dung got an expression and shown the Malliavin's differentiability of a class of fractional geometric mean-reverting processes.
\\
\\
Equation (\ref{EDS3}) is a generalization of the mean-reverting equation. In this paper, we study various properties of (\ref{EDS3}) by taking it in the sense of rough paths (cf. T. Lyons and Z. Qian \cite{LQ02}). Note that Doss-Sussman's method could also be used since (\ref{EDS3}) is a $1$-dimensional equation (cf. H. Doss \cite{DOSS76} and H.J. Sussman \cite{SUSS78}). A priori, even in these senses, equation (\ref{EDS3}) admits only a local solution because it doesn't satisfy the non-explosion condition of \cite{FV08}, Exercice 10.56. 
\\
\\
At Section 2, we state useful results on rough differential equations (RDEs) and Gaussian rough paths coming from P. Friz and N. Victoir \cite{FV08}. Section 3 is devoted to study deterministic properties of (\ref{EDS3}). We show existence and uniqueness of the solution for equation (\ref{EDS3}), provide an explicit upper-bound for that solution and study the continuity and differentiability of the associated It\^o map. We also provide a converging approximation with a rate of convergence. Section 4 is devoted to study probabilistic properties of (\ref{EDS3}) ; properties of the solution's distribution, various integrability results, a large deviation principle and the existence of a density with respect to Lebesgue's measure on $(\mathbb R,\mathcal B(\mathbb R))$ for the solution of (\ref{EDS3}). Finally, at Section 5, we study a pharmacokinetic model based on a particular generalized mean-reverting (M-R) equation (inspired by K. Kalogeropoulos et al. \cite{KDP08}).
%


%
\section{Rough differential equations and Gaussian rough paths}
\noindent
Essentially inspired by P. Friz and N. Victoir \cite{FV08}, this section provides useful definitions and results on RDEs and Gaussian rough paths.
\\
\\
In a sake of completeness, results on rough differential equations are stated in the multidimensional case.
\\
\\
In the sequel, $\|.\|$ denotes the euclidean norm on $\mathbb R^d$ and $\|.\|_{\mathcal M}$ the usual norm on $\mathcal M_d(\mathbb R)$ $(d\in\mathbb N^*)$.
\\
\\
Consider $D_T$ the set of subdivisions for $[0,T]$ and
\begin{displaymath}
\Delta_T =
\left\{(s,t)\in\mathbb R_{+}^{2} :
0\leqslant s < t\leqslant T\right\}.
\end{displaymath}
Let $T^N(\mathbb R^d)$ be the step-$N$ tensor algebra over $\mathbb R^d$ ($N\in\mathbb N^*$) :
\begin{displaymath}
T^N\left(\mathbb R^d\right) =
\bigoplus_{i = 0}^{N}
\left(\mathbb R^d\right)^{\otimes i}.
\end{displaymath}
For $i = 1,\dots,N$, $(\mathbb R^d)^{\otimes i}$ is equipped with its euclidean norm $\|.\|_i$, $(\mathbb R^d)^{\otimes 0} = \mathbb R$ and the canonical projection on $(\mathbb R^d)^{\otimes i}$ for any $Y\in T^N(\mathbb R^d)$ is denoted by $Y^i$.
\\
\\
First, let's remind definitions of $p$-variation and $\alpha$-H\"older norms ($p\geqslant 1$ and $\alpha\in [0,1]$) :
%


%
\begin{definition}\label{VARtopo}
Consider $y : [0,T]\rightarrow\mathbb R^d$ :
\begin{enumerate}
 \item The function $y$ has finite $p$-variation if and only if,
 \begin{displaymath}
 \|y\|_{p\textrm{-var};T} =
 \sup_{D = \{r_k\}\in D_T}\left(
 \sum_{k = 1}^{|D| - 1}\|y_{r_{k + 1}} - y_{r_k}\|^p\right)^{1/p} < \infty.
 \end{displaymath}
 \item The function $y$ is $\alpha$-H\"older continuous if and only if,
 \begin{displaymath}
 \|y\|_{\alpha\textrm{-H\"ol};T} =
 \sup_{(s,t)\in\Delta_T}
 \frac{\|y_t - y_s\|}{|t - s|^{\alpha}} < \infty. 
 \end{displaymath}
\end{enumerate}
\end{definition}
\noindent
In the sequel, the space of continuous functions with finite $p$-variation will be denoted by :
\begin{displaymath}
C^{p\textrm{-var}}\left([0,T];\mathbb R^d\right).
\end{displaymath}
The space of $\alpha$-H\"older continuous functions will be denoted by :
\begin{displaymath}
C^{\alpha\textrm{-H\"ol}}\left([0,T];\mathbb R^d\right).
\end{displaymath}
If it is not specified, these spaces will always be equipped with norms $\|.\|_{p\textrm{-var};T}$ and $\|.\|_{\alpha\textrm{-H\"ol};T}$ respectively.
\\
\\
\textbf{Remark.} Note that :
\begin{displaymath}
C^{\alpha\textrm{-H\"ol}}\left([0,T];\mathbb R^d\right)
\subset
C^{1/\alpha\textrm{-var}}\left([0,T];\mathbb R^d\right).
\end{displaymath}
%


%
\begin{definition}\label{signature}
Let $y : [0,T]\rightarrow\mathbb R^d$ be a continuous function of finite $1$-variation. The step-$N$ signature of $y$ is the functional $S_N(y) : \Delta_T\rightarrow T^N(\mathbb R^d)$ such that for every $(s,t)\in\Delta_T$ and $i = 1,\dots,N$,
\begin{displaymath}
S_{N;s,t}^{0}(y) = 1
\textrm{ and }
S_{N;s,t}^{i}(y) =
\int_{s < r_1 < r_2 < \dots < r_i < t}
dy_{r_1}\otimes\dots\otimes dy_{r_i}.
\end{displaymath}
Moreover,
\begin{displaymath}
G^N(\mathbb R^d) =
\left\{S_{N;0,T}(y) ; y\in C^{1\textrm{-var}}([0,T];\mathbb R^d)\right\}
\end{displaymath}
is the step-$N$ free nilpotent group over $\mathbb R^d$.
\end{definition}
%


%
\begin{definition}\label{Npvar}
A map $Y : \Delta_T\rightarrow G^N(\mathbb R^d)$ is of finite $p$-variation if and only if,
\begin{displaymath}
\|Y\|_{p\textrm{-var};T} =
\sup_{
D = \left\{r_k\right\}\in D_T}
\left(\sum_{k = 1}^{|D| - 1}
\|Y_{r_k,r_{k + 1}}\|_{\mathcal C}^{p}\right)^{1/p} < \infty
\end{displaymath}
where, $\|.\|_{\mathcal C}$ is the Carnot-Caratheodory's norm such that for every $g\in G^N(\mathbb R^d)$,
\begin{displaymath}
\|g\|_{\mathcal C} =
\inf\left\{
\mathbf{length}(y) ;
y\in
C^{1\textrm{-var}}([0,T];\mathbb R^d)
\textrm{ and }
S_{N;0,T}(y) = g\right\}.
\end{displaymath}
\end{definition}
\noindent
In the sequel, the space of continuous functions from $\Delta_T$ into $G^N(\mathbb R^d)$ with finite $p$-variation will be denoted by :
\begin{displaymath}
C^{p\textrm{-var}}([0,T];G^N(\mathbb R^d)).
\end{displaymath}
If it is not specified, that space will always be equipped with $\|.\|_{p\textrm{-var};T}$.
\\
\\
Let's define the Lipschitz regularity in the sense of Stein :
%


%
\begin{definition}\label{LIPvect}
Consider $\gamma > 0$. A map $V : \mathbb R^d\rightarrow\mathbb R$ is $\gamma$-Lipschitz (in the sense of Stein) if and only if $V$ is $C^{\lfloor\gamma\rfloor}$ on $\mathbb R^d$, bounded, with bounded derivatives and such that the $\lfloor\gamma\rfloor$-th derivative of $V$ is $\{\gamma\}$-H\"older continuous ($\lfloor\gamma\rfloor$ is the largest integer strictly smaller than $\gamma$ and $\{\gamma\} = \gamma - \lfloor\gamma\rfloor$).
\\
\\
The least bound is denoted by $\|V\|_{\textrm{lip}^{\gamma}}$. The map $\|.\|_{\textrm{lip}^{\gamma}}$ is a norm on the vector space of collections of $\gamma$-Lipschitz vector fields on $\mathbb R^d$, denoted by $\textrm{Lip}^{\gamma}(\mathbb R^d)$.
\end{definition}
\noindent
In the sequel, $\textrm{Lip}^{\gamma}(\mathbb R^d)$ will always be equipped with $\|.\|_{\textrm{lip}^{\gamma}}$.
\\
\\
Let $w : [0,T]\rightarrow\mathbb R^d$ be a continuous function of finite $p$-variation such that a geometric $p$-rough path $\mathbb W$ exists over it. In other words, there exists an approximating sequence $(w^n,n\in\mathbb N)$ of functions of finite $1$-variation such that :
\begin{displaymath}
\lim_{n\rightarrow\infty}
d_{p\textrm{-var};T}\left[
S_{[p]}\left(w^n\right);\mathbb W\right] = 0.
\end{displaymath}
When $d = 1$, a \textit{natural} geometric $p$-rough path $\mathbb W$ over it is defined by :
\begin{equation}\label{GRP1dim}
\forall (s,t)\in\Delta_T
\textrm{, }
\mathbb W_{s,t} =
\left(1,w_t - w_s,\dots,
\frac{(w_t - w_s)^{[p]}}{[p]!}\right).
\end{equation}
We remind that if $V = (V_1,\dots,V_d)$ is a collection of Lipschitz continuous vector fields on $\mathbb R^d$, the ordinary differential equation $dy = V(y)dw^n$, with initial condition $y_0\in\mathbb R^d$, admits a unique solution.
\\
\\
That solution is denoted by $\pi_V(0,y_0;w^n)$.
\\
\\
Rigorously, a RDE's solution is defined as follow (cf. \cite{FV08}, Definition 10.17) :
%


%
\begin{definition}\label{RDEsol}
A continuous function $y : [0,T]\rightarrow\mathbb R^d$ is a solution of $dy = V(y)d\mathbb W$ with initial condition $y_0\in\mathbb R^d$ if and only if,
\begin{displaymath}
\lim_{n\rightarrow\infty}
\left\|\pi_V(0,y_0;w^n) - y\right\|_{\infty;T} = 0
\end{displaymath}
where, $\|.\|_{\infty;T}$ is the uniform norm on $[0,T]$. If there exists a unique solution, it is denoted by $\pi_V(0,y_0;\mathbb W)$.
\end{definition}
%


%
\begin{theorem}\label{RDEres}
Let $V = (V_1,\dots,V_d)$ be a collection of locally $\gamma$-Lipschitz vector fields on $\mathbb R^d$ ($\gamma > p$) such that : $V$ and $D^{[p]}V$ are respectively globally Lipschitz continuous and $(\gamma - [p])$-H\"older continuous on $\mathbb R^d$. With initial condition $y_0\in\mathbb R^d$, equation $dy = V(y)d\mathbb W$ admits a unique solution $\pi_V(0,y_0;\mathbb W)$.
\end{theorem}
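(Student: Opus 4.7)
The plan is to combine the local existence and uniqueness result of \cite{FV08} for locally Lipschitz vector fields with a non-explosion argument exploiting the global linear growth of $V$ inherited from its global Lipschitz property. Since $V$ is locally $\gamma$-Lipschitz with $\gamma > p$, the standard local RDE theory of \cite{FV08} provides, for the initial condition $y_0$, a unique solution defined on some maximal interval $[0, T^{\ast})$ with $T^{\ast} \in (0, T]$; the game is then to show $T^{\ast} = T$.

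The second step is a truncation and patching argument. For each $R > 0$, I would produce a collection $V^R \in \textrm{Lip}^{\gamma}(\mathbb{R}^d)$ that coincides with $V$ on $\overline{B}(0, R)$, obtained by multiplying $V|_{\overline{B}(0, R + 1)} \in \textrm{Lip}^{\gamma}(\overline{B}(0, R + 1))$ by a smooth cutoff equal to $1$ on $\overline{B}(0, R)$ and supported in $\overline{B}(0, R + 1)$. The standard global existence and uniqueness theorem of \cite{FV08} then produces $y^R = \pi_{V^R}(0, y_0; \mathbb{W}) \in C([0, T]; \mathbb{R}^d)$. Setting $\tau_R = \inf\{t \in [0, T] : \|y^R_t\| \geq R\} \wedge T$, local uniqueness ensures that $y^R$ and $y^{R'}$ coincide on $[0, \tau_R]$ for $R' > R$, and on this interval both solve the original RDE driven by $V$. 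Producing a finite bound $M$ depending only on $\|y_0\|$, $\|V(0)\|$, $\|V\|_{\textrm{Lip}}$ and $\|\mathbb{W}\|_{p\textrm{-var}; T}$ such that $\sup_R \|y^R\|_{\infty; [0, \tau_R]} \leq M$ then forces $\tau_R = T$ for $R > M$ and completes existence; uniqueness on $[0, T]$ follows from local uniqueness along the trajectory of the resulting global solution.

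The third and hardest step is this a priori bound. The idea is to partition $[0, T]$ into finitely many sub-intervals $[s_k, s_{k + 1}]$ on which the $p$-variation of $\mathbb{W}$ is smaller than a fixed threshold (possible by the standard super-additivity argument on the control function of $\mathbb{W}$), apply Davie's increment estimate for RDEs on each sub-interval, and iterate. The main obstacle is that in the standard theory, the constant in Davie's estimate depends on the full $\textrm{Lip}^{\gamma}$ norm of the vector field, which here is only locally finite. To handle this, one exploits the global hypotheses: $D^{[p]}V$ grows at most polynomially of exponent $\gamma - [p] < 1$ (from global $(\gamma - [p])$-H\"older continuity), and interpolation with $\|DV\|_{\infty} \leq \|V\|_{\textrm{Lip}}$ produces sub-linear growth of the intermediate derivatives. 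Combined with the linear growth $\|V(y)\| \leq \|V(0)\| + \|V\|_{\textrm{Lip}} \|y\|$, a discrete Gronwall iteration over the sub-intervals absorbs these sub-linear contributions into a linear closure and yields the required bound uniformly in $R$.
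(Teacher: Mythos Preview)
The paper does not prove this theorem at all: immediately after the statement it simply writes ``For a proof, see P.~Friz and N.~Victoir \cite{FV08}, Exercice 10.56.'' So there is no argument in the paper to compare your proposal against; the result is imported as a black box from the Friz--Victoir book.

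Your sketch is, in outline, the standard route to that exercise: truncate $V$ to obtain globally $\textrm{Lip}^{\gamma}$ fields $V^R$, invoke the global existence theorem for those, patch the solutions $y^R$ via local uniqueness, and close the argument with an a~priori bound independent of $R$. The first two steps are fine. The third step as you describe it has a soft spot: the interpolation claim that boundedness of $DV$ together with $(\gamma - [p])$-H\"older control of $D^{[p]}V$ yields sub-linear growth of the intermediate derivatives $D^2V,\dots,D^{[p]-1}V$ is not automatic on all of $\mathbb R^d$. Landau--Kolmogorov inequalities give such bounds on bounded domains with constants depending on the domain size, so the effective local $\textrm{Lip}^{\gamma}$ norm on $\overline B(0,R)$ may grow with $R$, and one has to check that this growth is mild enough to be absorbed into the discrete Gronwall iteration. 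In practice the cleaner way (and the one hinted at in \cite{FV08}) is to bypass control of the intermediate derivatives and instead exploit that the Davie-type local estimate for the increment $y_t - y_s$, on a step where $\|\mathbb W\|_{p\textrm{-var};[s,t]}$ is small, can be written so that only $\|V\|_{\infty}$ (linear growth), $\|DV\|_{\infty}$ (bounded), and the global $(\gamma-[p])$-H\"older seminorm of $D^{[p]}V$ enter with the correct homogeneities; the higher-order terms in the Euler scheme remainder are then sub-linear in $\|y\|_{\infty;[s,t]}$, which is exactly what the Gronwall closure needs. If you make that dependence explicit rather than appealing to a generic interpolation, the argument goes through.
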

\noindent
For a proof, see P. Friz and N. Victoir \cite{FV08}, Exercice 10.56.
\\
\\
For P. Friz and N. Victoir, the rough integral for a collection of $(\gamma - 1)$-Lipschitz vector fields $V = (V_1,\dots,V_d)$ along $\mathbb W$ is the projection of a particular full RDE's solution (cf. \cite{FV08}, Definition 10.34 for full RDEs) : $d\mathbb X = \Phi(\mathbb X)d\mathbb W$ where,
\begin{displaymath}
\forall i = 1,\dots,d\textrm{, }
\forall a,w\in\mathbb R^d\textrm{, }
\Phi_i(w,a) = (e_i,V_i(w))
\end{displaymath}
and $(e_1,\dots,e_d)$ is the canonical basis of $\mathbb R^d$.
\\
\\
In particular, if $y : [0,T]\rightarrow\mathcal M_d(\mathbb R)$ and $z : [0,T]\rightarrow\mathbb R^d$ are two continuous functions, respectively of finite $p$-variation and finite $q$-variation with $1/p + 1/q > 1$, the Young integral of $y$ with respect to $z$ is denoted by $\mathcal Y(y,z)$.
\\
\\
\textbf{Remark.} We are not developing the notion of full RDE in that paper because it is not useful in the sequel. As mentioned above, the reader can refer to \cite{FV08}, Definition 10.34 for details.
\\
\\
For a proof of the following change of variable formula for geometric rough paths, cf. \cite{COUTIN11}, Theorem 53 :
%


%
\begin{theorem}\label{CVformula}
Let $\Phi$ be a collection of $\gamma$-Lipschitz vector fields on $\mathbb R^d$ ($\gamma > p$) and let $\mathbb W$ be a geometric $p$-rough path. Then,
\begin{displaymath}
\forall (s,t)\in\Delta_T
\textrm{, }
\Phi\left(w_t\right) - \Phi\left(w_s\right) =
\left[
\int D\Phi(\mathbb W)d\mathbb W\right]_{s,t}^{1}.
\end{displaymath}
\end{theorem}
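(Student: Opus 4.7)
The plan is to prove the identity first when the driving signal is smooth, where it reduces to the classical chain rule, and then transfer it to the geometric $p$-rough path level by a density-and-continuity argument.

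By definition of a geometric $p$-rough path, one can pick a sequence $(w^n)_{n\in\mathbb N}$ of continuous paths of finite $1$-variation such that $d_{p\textrm{-var};T}(S_{[p]}(w^n),\mathbb W)\to 0$, which in particular entails $w^n\to w$ uniformly on $[0,T]$. Since $\Phi$ is $C^{[\gamma]}$ on $\mathbb R^d$ with bounded derivatives, the classical chain rule yields
\[\Phi(w_t^n)-\Phi(w_s^n)=\int_s^t D\Phi(w_u^n)\,dw_u^n,\qquad(s,t)\in\Delta_T,\]
as an ordinary Riemann--Stieltjes integral. By construction of the rough integral as the first-level projection of a full RDE, recalled just before the statement, this Riemann--Stieltjes integral coincides with $\left[\int D\Phi(S_{[p]}(w^n))\,dS_{[p]}(w^n)\right]_{s,t}^{1}$.

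It then remains to let $n\to\infty$ on both sides. The left-hand side converges to $\Phi(w_t)-\Phi(w_s)$ by continuity of $\Phi$ combined with the uniform convergence of $w^n$. For the right-hand side, $\Phi\in\textrm{Lip}^\gamma(\mathbb R^d)$ with $\gamma>p$ provides exactly the regularity needed to make the rough integration map
\[\mathbb X\longmapsto\int D\Phi(\mathbb X)\,d\mathbb X\]
a continuous map from $C^{p\textrm{-var}}([0,T];G^{[p]}(\mathbb R^d))$ into itself in the $p$-variation topology. Specialising this continuity to the convergent sequence $S_{[p]}(w^n)\to\mathbb W$ and looking at the first-level increment over $(s,t)$ yields the announced formula.

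The main technical obstacle is precisely the continuity of the rough integration operator in the $p$-variation topology; this is a version of Lyons' universal limit theorem, and it is the place where the full strength of the hypothesis $\gamma>p$ is consumed. Once this ingredient is available (as in \cite{FV08}), the argument reduces to a clean limiting procedure grafted onto the classical chain rule; an alternative would be a direct Davie-type verification using Taylor's formula for $\Phi$ and the shuffle relations satisfied by the levels of $\mathbb W$, but the approximation route is both shorter and more transparent.
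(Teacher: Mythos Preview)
Your approximation argument is correct and is the standard route to this formula in the Friz--Victoir framework: reduce to the chain rule for finite $1$-variation paths, then invoke continuity of the rough integral (equivalently, of the associated full RDE) in $p$-variation to pass to the limit. The only places that deserve a line of justification are the consistency of the rough integral with the Riemann--Stieltjes one for bounded-variation drivers (this is \cite{FV08}, Proposition~10.44 or the surrounding discussion) and the continuity statement itself, for which the relevant input is that $D\Phi\in\textrm{Lip}^{\gamma-1}$ with $\gamma-1>p-1$, so \cite{FV08}, Theorem~10.47 applies.

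Note, however, that the paper does not prove this theorem at all: it simply refers to \cite{COUTIN11}, Theorem~53. Coutin's approach there is via the sewing lemma rather than by approximation, i.e.\ one checks directly that the local germ $\Xi_{s,t}=\sum_{k=1}^{[p]}\frac{1}{k!}D^k\Phi(w_s)(\mathbb W_{s,t}^{1})^{\otimes k}$ (or the equivalent expansion using the higher levels of $\mathbb W$) satisfies the almost-additivity estimate and identifies its sewing with both $\Phi(w_t)-\Phi(w_s)$ and the rough integral. This is closer in spirit to the ``Davie-type verification'' you mention as an alternative. Both routes consume the hypothesis $\gamma>p$ in the same place---controlling the Taylor remainder of $\Phi$ at order $[p]$---but the sewing-lemma proof is self-contained and does not rely on the full RDE machinery, whereas your approximation proof is shorter once that machinery is taken for granted.
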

\noindent
Now, let state some results on $1$-dimensional Gaussian rough paths :
\\
\\
Consider a stochastic process $W$ defined on $[0,T]$ and satisfying the following assumption :
%


%
\begin{assumption}\label{HYPW}
$W$ is a $1$-dimensional centered Gaussian process with $\alpha$-H\"older continuous paths on $[0,T]$ ($\alpha\in ]0,1]$).
\end{assumption}
\noindent
In the sequel, we work on the probability space $(\Omega,\mathcal A,\mathbb P)$ where $\Omega = C^0([0,T];\mathbb R)$, $\mathcal A$ is the $\sigma$-algebra generated by cylinder sets and $\mathbb P$ is the probability measure induced by $W$ on $(\Omega,\mathcal A)$.
\\
\\
\textbf{Remark.} Since $W$ is a $1$-dimensional Gaussian process, the \textit{natural} geometric $1/\alpha$-rough path over it defined by (\ref{GRP1dim}) is matching with the enhanced Gaussian process for $W$ provided by P. Friz and N. Victoir at \cite{FV08}, Theorem 15.33 in the multidimensional case.
\\
\\
Finally, Cameron-Martin's space of $W$ is given by :
\begin{displaymath}
\mathcal H_{W}^{1} =
\left\{
h\in C^0([0,T];\mathbb R) :
\exists Z\in\mathcal A_W
\textrm{ s.t. }
\forall t\in [0,T]\textrm{, }
h_t =
\mathbb E(W_tZ)
\right\}
\end{displaymath}
with
\begin{displaymath}
\mathcal A_W =
\overline{\textrm{span}
\left\{W_t;
t\in [0,T]\right\}}^{L^2}.
\end{displaymath}
Let $\langle .,.\rangle_{\mathcal H_{W}^{1}}$ be the map defined on $\mathcal H_{W}^{1}\times\mathcal H_{W}^{1}$ by :
\begin{displaymath}
\langle h,\tilde h\rangle_{\mathcal H_{W}^{1}} =
\mathbb E(Z\tilde Z)
\end{displaymath}
where,
\begin{displaymath}
\forall t\in [0,T]\textrm{, }
h_t =
\mathbb E(W_tZ)
\textrm{ and }
\tilde h_t =
\mathbb E(W_t\tilde Z)
\end{displaymath}
with $Z,\tilde Z\in\mathcal A_W$.
\\
\\
That map is a scalar product on $\mathcal H_{W}^{1}$ and, equipped with it, $\mathcal H_{W}^{1}$ is a Hilbert space.
\\
\\
The triplet $(\Omega,\mathcal H_{W}^{1},\mathbb P)$ is called an abstract Wiener space (cf. M. Ledoux \cite{LED94}).
%


%
\begin{proposition}\label{BHcondition}
For $d = 1$, consider a random variable $F :\Omega\rightarrow\mathbb R$, continuously $\mathcal H_{W}^{1}$-differentiable (i.e. $h\mapsto F(\omega + h)$ is continuously differentiable from $\mathcal H_{W}^{1}$ into $\mathbb R$, for almost every $\omega\in\Omega$).
\\
\\
If $F$ satisfies Bouleau-Hirsch's condition (i.e. $|D_hF| > 0$ a.s. for at least one $h\in\mathcal H_{W}^{1}$ such that $h\not= 0$, where :
\begin{displaymath}
(D_{\eta}F)(\omega) =
\left.\frac{\partial}{\partial\varepsilon}
F(\omega +\varepsilon\eta)
\right|_{\varepsilon = 0}
\textrm{, }
\forall\eta\in\mathcal H_{W}^{1}),
\end{displaymath}
then $F$ admits a density with respect to Lebesgue's measure on $(\mathbb R,\mathcal B(\mathbb R))$.
\end{proposition}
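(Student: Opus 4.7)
The plan is to prove that $\mathbb{P}(F\in A)=0$ for every Lebesgue-null set $A\subset\mathbb{R}$. The key tool is Cameron-Martin quasi-invariance of $\mathbb{P}$ under the translation $\omega\mapsto\omega+th$, combined with a one-dimensional slicing argument along the distinguished direction $h$. Without loss of generality I assume $\|h\|_{\mathcal{H}_W^1}=1$, and I let $Z_h\in\mathcal{A}_W$ denote the associated standard Gaussian variable determined by $h_t=\mathbb{E}(W_tZ_h)$. The Cameron-Martin formula then reads
$$\mathbb{E}[\varphi(\omega+th)]=\mathbb{E}\!\left[\varphi(\omega)\exp\!\left(tZ_h(\omega)-t^2/2\right)\right],\qquad t\in\mathbb{R},$$
for every bounded measurable $\varphi$, so that for each $t\in\mathbb{R}$ the laws of $\omega$ and of $\omega+th$ are mutually absolutely continuous under $\mathbb{P}$.

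The heart of the argument is a slicing in the direction $h$. By quasi-invariance, $|D_hF(\omega+th)|>0$ $\mathbb{P}$-a.s. for every fixed $t$, and since $(t,\omega)\mapsto D_hF(\omega+th)$ is jointly measurable, Fubini ensures that for $\mathbb{P}$-almost every $\omega$ the exceptional set $N_\omega:=\{t\in\mathbb{R}:D_hF(\omega+th)=0\}$ has Lebesgue measure zero. For any such $\omega$ the continuous $\mathcal{H}_W^1$-differentiability of $F$ shows that $g_\omega(t):=F(\omega+th)$ is $C^1$ on $\mathbb{R}$ with $g_\omega'(t)=D_hF(\omega+th)$, so on each connected component $I$ of $\mathbb{R}\setminus N_\omega$ the function $g_\omega$ is a strictly monotone $C^1$-diffeomorphism onto $g_\omega(I)$. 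The one-dimensional change-of-variables formula
$$\int_I \mathbf{1}_A(g_\omega(t))|g_\omega'(t)|\,dt=\int_{g_\omega(I)}\mathbf{1}_A(y)\,dy=0,$$
combined with $|g_\omega'|>0$ on $I$, forces $\mathbf{1}_A\circ g_\omega=0$ Lebesgue-a.e.\ on $I$; since $N_\omega$ itself is null, the set $\{t\in\mathbb{R}:F(\omega+th)\in A\}$ has Lebesgue measure zero.

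To conclude I apply Fubini once more and close the loop with Cameron-Martin. Integrating the previous statement in $\omega$ yields, for every Lebesgue-null $A$,
$$\int_\mathbb{R}\mathbb{P}(F(\cdot+th)\in A)\,dt=\int_\Omega\!\int_\mathbb{R}\mathbf{1}_A(g_\omega(t))\,dt\,d\mathbb{P}(\omega)=0,$$
so $\mathbb{P}(F(\cdot+th)\in A)=0$ for Lebesgue-a.e.\ $t\in\mathbb{R}$. By the Cameron-Martin formula, $t\mapsto\mathbb{P}(F(\cdot+th)\in A)=\mathbb{E}[\mathbf{1}_A(F)\exp(tZ_h-t^2/2)]$ is continuous (in fact analytic) in $t$, hence vanishes identically, and evaluation at $t=0$ delivers $\mathbb{P}(F\in A)=0$. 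The main delicate point I foresee is the joint $(t,\omega)$-measurability of $D_hF(\omega+th)$ together with the $\mathbb{P}$-a.s.\ definedness of the $\mathcal{H}_W^1$-derivative on the shifted paths; both follow from the assumed continuity of $h\mapsto F(\omega+h)$ and the separability of $\mathcal{H}_W^1$. The remainder is a standard Cameron-Martin slicing argument.
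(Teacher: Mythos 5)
Your argument is correct, but it takes a genuinely different route from the paper's. The paper does not give a self-contained proof: its two remarks merely observe that $D_hF=\langle\mathbf{D}F,I^{-1}(h)\rangle_{\mathcal{H}}$, so by Cauchy--Schwarz the hypothesis $|D_hF|>0$ a.s.\ implies the classical Malliavin non-degeneracy $\|\mathbf{D}F\|_{\mathcal{H}}^2>0$ a.s., and then the existence of a density is delegated to Nualart's Theorem~2.1.2. You instead prove the statement directly by the Cameron--Martin slicing argument: quasi-invariance gives $|D_hF(\cdot+th)|>0$ a.s.\ for each $t$, Fubini passes this to a.e.\ $t$ along a.e.\ line $\{\omega+th\}$, the $C^1$ regularity of $g_\omega(t)=F(\omega+th)$ (which is where the continuous $\mathcal{H}_W^1$-differentiability is actually used, and which also ensures $g_\omega'(t)=D_hF(\omega+th)$ since the full-measure set of $\mathcal{H}_W^1$-differentiability points is $\mathcal{H}_W^1$-translation-invariant) plus the one-dimensional change of variables kills $\mathbf{1}_A\circ g_\omega$ Lebesgue-a.e.\ on each line, and a second Fubini together with the continuity (in fact analyticity) of $t\mapsto\mathbb{E}[\mathbf{1}_A(F)\exp(tZ_h-t^2/2)]$ closes the argument at $t=0$. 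In short, you have unwound the Bouleau--Hirsch criterion that the paper cites as a black box. What the paper's approach buys is brevity, at the cost of requiring the reader to import the Malliavin-calculus machinery (the derivative operator $\mathbf{D}$, the isometry $I$, the Sobolev spaces $\mathbb{D}^{1,p}$). What yours buys is self-containment and transparency: because the paper's hypothesis (continuous $\mathcal{H}_W^1$-differentiability) is strictly stronger than $\mathbb{D}^{1,p}$ membership, the pathwise slicing works directly with no approximation by smooth cylindrical functionals, which is where most of the technical work in the general Bouleau--Hirsch proof lies; the only non-elementary input is the Cameron--Martin density, and the rest is Fubini and one-variable calculus. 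Both routes are valid.
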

\noindent
\textbf{Remarks :}
\begin{enumerate}
 \item Classically, Bouleau-Hirsch's condition is not stated that way and involves Malliavin calculus framework. Consider the Malliavin derivative operator $\mathbf D$ (cf. D. Nualart \cite{NUAL05}, Section 1.2), the reproducing kernel Hilbert space $\mathcal H_W$ of the Gaussian process $W$ (cf. J. Neveu \cite{NEVEU68}), and the canonical isometry $I$ from $\mathcal H_W$ into $\mathcal H_{W}^{1}$ defined for example at N. Marie \cite{MARIE11}, Section 3.1. Bouleau-Hirsch's condition for $d = 1$ is $\|\mathbf DF\|_{\mathcal H}^{2} > 0$.
 \\
 \\
 On one hand, by Cauchy-Schwarz's inequality, it is sufficient to show that there exists $h\in\mathcal H_{W}^{1}$ satisfying $h\not= 0$ and $|\langle\mathbf DF, I^{-1}(h)\rangle_{\mathcal H}| > 0$. On the other hand, with Malliavin calculus methods, one can easily show that $\langle\mathbf DF,I^{-1}(h)\rangle_{\mathcal H} = D_hF$.
 \item About Bouleau-Hirsch's criterion for $d\geqslant 1$, please refer to \cite{NUAL05}, Theorem 2.1.2.
\end{enumerate}
%


%
\section{Deterministic properties of the generalized mean-reverting equation}
\noindent
In this section, we show existence and uniqueness of the solution for equation (\ref{EDS3}), provide an explicit upper-bound for that solution and, study the continuity and differentiability of the associated It\^o map. We also provide a converging approximation for equation (\ref{EDS3}).
\\
\\
Consider a function $w : [0,T]\rightarrow\mathbb R$ satisfying the following assumption :
%


%
\begin{assumption}\label{HYPw}
The function $w$ is $\alpha$-H\"older continuous ($\alpha\in ]0,1]$).
\end{assumption}
\noindent
Let $\mathbb W$ be the \textit{natural} geometric $1/\alpha$-rough path over $w$ defined by (\ref{GRP1dim}). Then, we put $\mathcal W = S_{[1/\alpha]}(\textrm{Id}_{[0,T]}\oplus\mathbb W)$, which is a geometric $1/\alpha$-rough path over
\begin{displaymath}
t\in [0,T]\longmapsto
\left(t,w_t\right)
\end{displaymath}
by \cite{FV08}, Theorem 9.26.
\\
\\
\textbf{Remark.} For a rigorous construction of Young pairing, the reader can refer to \cite{FV08}, Section 9.4.
\\
\\
Then, consider the rough differential equation :
\begin{equation}\label{DETmre}
dx = V(x)d\mathcal W
\textrm{ with initial condition $x_0\in\mathbb R$,}
\end{equation}
where $V$ is the map defined on $\mathbb R_+$ by :
\begin{displaymath}
\forall x\in\mathbb R_+\textrm{, }
\forall t,w\in\mathbb R\textrm{, }
V(x).(t,w) =
(a - bx)t +
\sigma x^{\beta}w.
\end{displaymath}
For technical reasons, we introduce another equation :
\begin{equation}\label{EDS4}
y_t =
y_0 +
a(1-\beta)
\int_{0}^{t}
y_{s}^{-\gamma}e^{bs}ds +
\tilde w_t
\textrm{ ; }
t\in [0,T]
\textrm{, }
y_0 > 0
\end{equation}
where, $\gamma = \frac{\beta}{1-\beta}$ and
\begin{displaymath}
\tilde w_t =
\int_{0}^{t}\vartheta_sdw_s
\textrm{ with }
\vartheta_t =
\sigma(1-\beta)e^{b(1-\beta)t}
\end{displaymath}
for every $t\in [0,T]$. The integral is taken in the sense of Young.
\\
\\
The map $u\in [\varepsilon,\infty[\mapsto u^{-\gamma}$ belongs to $C^{\infty}([\varepsilon,\infty[;\mathbb R)$ and is bounded with bounded derivatives on $[\varepsilon,\infty[$ for every $\varepsilon > 0$. Then, equation (\ref{EDS4}) admits a unique solution in the sense of Definition \ref{RDEsol} by applying Theorem \ref{RDEres} up to the time
\begin{displaymath}
\tau_{\varepsilon}^{1} =
\inf\left\{t\in [0,T] :
y_t = \varepsilon\right\}
\textrm{ ; }
\varepsilon\in
]0,y_0],
\end{displaymath}
by assuming that $\inf(\emptyset) = \infty$.
\\
\\
Consider also the time $\tau_{0}^{1} > 0$, such that $\tau_{\varepsilon}^{1}\uparrow\tau_{0}^{1}$ when $\varepsilon\rightarrow 0$.
%


%
\subsection{Existence and uniqueness of the solution}
As mentioned above, Section 2 ensures that equation (\ref{EDS4}) has, at least locally, a unique solution denoted $y$. At Lemma \ref{EDS3change}, we prove it ensures that equation (\ref{DETmre}) admits also, at least locally, a unique solution (in the sense of Definition \ref{RDEsol}) denoted $x$. In particular, we show that $x = y^{\gamma + 1}e^{-b.}$. At Proposition \ref{MRres}, we prove the global existence of $y$ by using the fact it never hits $0$ on $[0,T]$. These results together ensures the existence and uniqueness of $x$ on $[0,T]$.
%


%
\begin{lemma}\label{EDS3change}
Consider $y_0 > 0$ and $a,b\geqslant 0$. Under assumptions \ref{Eass} and \ref{HYPw}, up to the time $\tau_{\varepsilon}^{1}$ ($\varepsilon\in ]0,y_0]$), if $y$ is the solution of (\ref{EDS4}) with initial condition $y_0$, then
\begin{displaymath}
x : t\in \left[0,\tau_{\varepsilon}^{1}\right]\longmapsto
x_t = y_{t}^{\gamma + 1}e^{-bt}
\end{displaymath}
is the solution of (\ref{DETmre}) on $[0,\tau_{\varepsilon}^{1}]$, with initial condition $x_0 = y_{0}^{\gamma + 1}$.
\end{lemma}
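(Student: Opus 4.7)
The plan is to apply a time-dependent rough chain rule to the map $\phi(t,z) = z^{\gamma+1} e^{-bt}$ along the path $y$, and then verify by direct algebraic substitution that the resulting equation coincides with (\ref{DETmre}).

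First, I fix $\varepsilon \in ]0, y_0]$ and work on $[0, \tau_\varepsilon^1]$, where by definition $y_s \geq \varepsilon$; continuity of the $1/\alpha$-variation solution on a compact interval also provides an upper bound $M_\varepsilon$. On a neighbourhood of $[\varepsilon/2, M_\varepsilon + 1]$, the map $z \mapsto z^{\gamma+1}$ is $C^\infty$ with bounded derivatives of every order, so after multiplication by a smooth cutoff, $\phi$ can be extended to an element of $\textrm{Lip}^\gamma(\mathbb R^2)$ for any $\gamma > 1/\alpha$ without modifying its values along $(s, y_s)$. This is what makes the rough change of variable formula applicable.

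Second, I apply the chain rule, which follows from Theorem \ref{CVformula} once the time component is incorporated into the driver (precisely as in the construction of $\mathcal W = S_{[1/\alpha]}(\textrm{Id}_{[0,T]} \oplus \mathbb W)$ used to define (\ref{DETmre})). This yields $dx_t = \partial_t \phi(t, y_t)\, dt + \partial_z \phi(t, y_t)\, dy_t = -b x_t\, dt + (\gamma+1) y_t^\gamma e^{-bt}\, dy_t$, where the rough differential $dy_t$ is given by $dy_t = a(1-\beta) y_t^{-\gamma} e^{bt}\, dt + \sigma(1-\beta) e^{b(1-\beta)t}\, dw_t$, read off from (\ref{EDS4}) and the definition of $\tilde w$.

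Third, I substitute this decomposition and simplify. The drift contribution of the last term is $(\gamma+1)(1-\beta)\, a\, dt$ and the diffusion contribution is $(\gamma+1)(1-\beta)\, \sigma y_t^\gamma e^{-b\beta t}\, dw_t$. The identities $(\gamma+1)(1-\beta) = 1$ and $\beta(\gamma+1) = \gamma$, both immediate from $\gamma = \beta/(1-\beta)$, together with $y_t^\gamma e^{-b\beta t} = (y_t^{\gamma+1} e^{-bt})^\beta = x_t^\beta$, reduce the above to $dx_t = (a - b x_t)\, dt + \sigma x_t^\beta\, dw_t$. This is precisely equation (\ref{DETmre}) in the sense of Definition \ref{RDEsol}, and the initial value $x_0 = y_0^{\gamma+1}$ is immediate from $\phi(0, y_0)$.

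The main obstacle is not the computation itself but the rigorous application of the chain rule in the presence of a drift $y^{-\gamma}$ which is singular at $0$; the cutoff/extension argument above handles this by restricting to $[0, \tau_\varepsilon^1]$, where $y$ stays in a compact subset of $]0,\infty[$. Uniqueness of $x$ on $[0,\tau_\varepsilon^1]$ then follows from the uniqueness statement for (\ref{EDS4}) (provided by Theorem \ref{RDEres} applied with the $\textrm{Lip}^\gamma$ extension of $z \mapsto z^{-\gamma}$) via the inverse change of variable $y = (x e^{b\cdot})^{1-\beta}$, which is smooth on the range of values taken by $x$ and to which the same chain rule argument applies in reverse.
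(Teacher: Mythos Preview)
Your proof is correct and follows essentially the same route as the paper's: apply the rough change of variable formula (Theorem \ref{CVformula}) on $[0,\tau_\varepsilon^1]$, where $y$ stays in a compact subset of $]0,\infty[$, and then use the algebraic identities $(\gamma+1)(1-\beta)=1$ and $\beta(\gamma+1)=\gamma$ to recognise (\ref{DETmre}). The only cosmetic difference is that the paper introduces the intermediate process $z_t = y_t e^{-b(1-\beta)t}$ so that $x = z^{\gamma+1}$ becomes a \emph{one-variable} transformation, which lets Theorem \ref{CVformula} be applied verbatim without first incorporating time into the driver; you instead apply the chain rule directly to the two-variable map $(t,z)\mapsto z^{\gamma+1}e^{-bt}$, which requires the time-augmentation you mention. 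Both lead to the same computation.
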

%


%
\begin{proof}
Consider the solution $y$ of (\ref{EDS4}) on $[0,\tau_{\varepsilon}^{1}]$, with initial condition $y_0 > 0$.
\\
\\
The continuous function $z = ye^{-b(1-\beta).}$ takes its values in $[m_{\varepsilon},M_{\varepsilon}]\subset\mathbb R_{+}^{*}$ on $[0,\tau_{\varepsilon}^{1}]$.
\\
\\
Since $\gamma > 0$, the map $\Phi : u\in [m_{\varepsilon},M_{\varepsilon}]\mapsto u^{\gamma + 1}$ is $C^{\infty}$, bounded and with bounded derivatives.
\\
\\
Then, by applying the change of variable formula (Theorem \ref{CVformula}) to $z$ and to the map $\Phi$ between $0$ and $t\in [0,\tau_{\varepsilon}^{1}]$ :
\begin{eqnarray*}
 x_t & = &
 z_{0}^{\gamma + 1} +
 (\gamma + 1)
 \int_{0}^{t}z_{s}^{\gamma}dz_s\\
 & = &
 y_{0}^{\gamma + 1} +
 \int_{0}^{t}\left(a - bx_s\right)ds +
 \sigma\int_{0}^{t}y_{s}^{\gamma}e^{-b\beta s}dw_s.
\end{eqnarray*}
Since $\gamma = \beta(\gamma + 1)$, in the sense of Definition \ref{RDEsol}, $x$ is the solution of (\ref{DETmre}) on $[0,\tau_{\varepsilon}^{1}]$ with initial condition $x_0 = y_{0}^{\gamma + 1}$.
\end{proof}
%


%
\begin{proposition}\label{MRres}
Under assumptions \ref{Eass} and \ref{HYPw}, for $a > 0$ and $b\geqslant 0$, with initial condition $x_0 > 0$ ; $\tau_{0}^{1} > T$ and then, equation (\ref{DETmre}) admits a unique solution $\tilde\pi_V(0,x_0;w)$ on $[0,T]$, satisfying :
\begin{displaymath}
\tilde\pi_V(0,x_0;w) =
\pi_V(0,x_0;\mathcal W).
\end{displaymath}
Moreover, since $T > 0$ is chosen arbitrarily, that notion of solution extends to $\mathbb R_+$.
\end{proposition}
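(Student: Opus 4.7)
The plan is to prove the statement in two stages. The heart of the argument is to show that $\tau_0^1 > T$, i.e.\ that the local solution $y$ of (\ref{EDS4}) stays strictly positive on $[0,T]$. Once this is done, taking any $\varepsilon \in\, ]0, m]$ with $m := \inf_{t \in [0,T]} y_t > 0$ gives $\tau_\varepsilon^1 \geqslant T$, so Lemma \ref{EDS3change} immediately provides $x := y^{\gamma+1} e^{-b\cdot}$ as a solution of (\ref{DETmre}) on $[0,T]$ in the sense of Definition \ref{RDEsol}, and by construction this $x$ coincides with $\pi_V(0,x_0;\mathcal{W})$; one then sets $\tilde{\pi}_V(0,x_0;w) := x$. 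Uniqueness of $x$ is inherited from the uniqueness of $y$ via the bijection $y \leftrightarrow x$, and the extension to $\mathbb{R}_+$ follows from the arbitrariness of $T$.

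To show $\tau_0^1 > T$, I would first apply the change of variables formula (Theorem \ref{CVformula}) to the smooth map $\phi(u) = u^{\gamma+1}/(\gamma+1)$ along $y$ on $[0, \tau_\varepsilon^1]$, where $y$ takes values in a compact subset of $\,]0, +\infty[$. Combining this with (\ref{EDS4}) and using $(\gamma+1)(1-\beta) = 1$, one obtains the key identity
\begin{displaymath}
y_t^{\gamma+1} = y_0^{\gamma+1} + a \int_0^t e^{bs}\,ds + (\gamma+1) \int_0^t y_s^{\gamma}\,d\tilde{w}_s, \qquad t \in [0, \tau_\varepsilon^1].
\end{displaymath}

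The next step is to extract from this identity, uniformly in $\varepsilon$, both an upper bound on $\|y\|_{\infty;[0,\tau_\varepsilon^1]}$ and a strict positive lower bound on $\inf_{[0,\tau_\varepsilon^1]} y$. Both bounds come from a subdivision of $[0,T]$ into short intervals on which the Young--L\"ove estimate controls the integral term by $\sup|y|$, $\|y^{\gamma}\|_{\alpha\textrm{-H\"ol}}$ and $\|\tilde{w}\|_{\alpha\textrm{-H\"ol};[0,T]}$, combined with a step-by-step induction that propagates the bounds across the subdivision. The main obstacle is the lower bound: on an interval of length $\delta$, the strictly positive drift $a \int_s^t e^{bu}\,du$ is exactly of order $\delta$, while the integral contribution is at most of order $\delta^{\alpha + \alpha\min(\gamma,1)}$; Assumption \ref{Eass} is precisely what forces this exponent to exceed $1$, so on sufficiently short subintervals the drift dominates the integral term and no iteration can drive $y^{\gamma+1}$ to zero. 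This yields $\inf_{[0,T]} y > 0$, and hence $\tau_0^1 > T$.
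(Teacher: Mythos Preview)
Your proposal contains a genuine gap at the crucial step. You claim that on an interval of length $\delta$ the Young integral $(\gamma+1)\int_s^t y_u^{\gamma}\,d\tilde w_u$ is at most of order $\delta^{\alpha+\alpha\min(\gamma,1)}$, and then compare this with the drift of order $\delta$. But $\delta^{\alpha+\alpha\min(\gamma,1)}$ is only the size of the \emph{remainder} in the Young--Love estimate; the integral itself has leading term $(\gamma+1)\,y_s^{\gamma}(\tilde w_t-\tilde w_s)$, which is of order $\delta^{\alpha}$ and therefore \emph{dominates} the drift on short intervals since $\alpha<1$. Consequently the inequality ``drift beats noise on short subintervals'' does not follow, and the induction you sketch cannot be launched. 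A second, related difficulty: your Young--Love constants involve $\|y^{\gamma}\|_{\alpha\min(\gamma,1)\textrm{-H\"ol}}$, which is controlled only once you already know an upper bound on $y$ \emph{and} (when $\gamma>1$) a lower bound away from zero --- precisely what you are trying to establish. Finally, even the exponent claim fails in part of the admissible range: when $\alpha\leqslant 1/2$ Assumption~\ref{Eass} forces $\beta>1-\alpha\geqslant 1/2$, hence $\gamma>1$, and then $\alpha+\alpha\min(\gamma,1)=2\alpha\leqslant 1$.

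The paper's argument avoids all of this by working directly with equation~(\ref{EDS4}) rather than with $y^{\gamma+1}$. Assuming $\tau_0^1\leqslant T$ and writing, for $t<\tau_0^1$,
\[
y_t + a(1-\beta)\int_t^{\tau_0^1} y_s^{-\gamma}e^{bs}\,ds
= \tilde w_t-\tilde w_{\tau_0^1},
\]
both summands on the left are nonnegative, so each is bounded by $\|\tilde w\|_{\alpha\textrm{-H\"ol};T}(\tau_0^1-t)^{\alpha}$. The first bound gives $y_s^{-\gamma}\geqslant \|\tilde w\|_{\alpha\textrm{-H\"ol};T}^{-\gamma}(\tau_0^1-s)^{-\alpha\gamma}$; feeding this back into the second bound forces an integral of $(\tau_0^1-s)^{-\alpha\gamma}$ to remain finite, which contradicts Assumption~\ref{Eass}. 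The mechanism is thus the \emph{blow-up of $y^{-\gamma}$} near $\tau_0^1$, not a short-time comparison of powers of $\delta$.
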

%


%
\begin{proof}
Suppose that $\tau_{0}^{1}\leqslant T$, put $y_0 = x_{0}^{1-\beta}$ and consider the solution $y$ of (\ref{EDS4}) on $[0,\tau_{\varepsilon}^{1}]$ ($\varepsilon\in ]0,y_0]$), with initial condition $y_0$.
\\
\\
On one hand, note that by definition of $\tau_{\varepsilon}^{1}$ :
\begin{eqnarray*}
 y_{\tau_{\varepsilon}^{1}} - y_t & = & \varepsilon - y_t\textrm{ and}\\
 y_{\tau_{\varepsilon}^{1}} - y_t & = &
 a(1-\beta)
 \int_{t}^{\tau_{\varepsilon}^{1}}y_{s}^{-\gamma}e^{bs}ds +
 \tilde w_{\tau_{\varepsilon}^{1}} -
 \tilde w_t
\end{eqnarray*}
for every $t\in [0,\tau_{\varepsilon}^{1}]$. Then, since $\tau_{\varepsilon}^{1}\uparrow\tau_{0}^{1}$ when $\varepsilon\rightarrow 0$ :
\begin{equation}\label{EGMRres}
y_t +
a(1-\beta)
\int_{t}^{\tau_{0}^{1}}y_{s}^{-\gamma}e^{bs}ds =
\tilde w_t -
\tilde w_{\tau_{0}^{1}}
\end{equation}
for every $t\in [0,\tau_{0}^{1}[$.
\\
\\
Moreover, since $\tilde w$ is the Young integral of $\vartheta\in C^{\infty}([0,T];\mathbb R_+)$ against $w$, and $w$ is $\alpha$-H\"older continuous, $\tilde w$ is also $\alpha$-H\"older continuous (cf. \cite{FV08}, Theorem 6.8).
\\
\\
Together, equality (\ref{EGMRres}) and the $\alpha$-H\"older continuity of $\tilde w$ imply :
\begin{displaymath}
-\|\tilde w\|_{\alpha\textrm{-H\"ol};T}(\tau_{0}^{1} - t)^{\alpha}
\leqslant
y_t +
a(1-\beta)
\int_{t}^{\tau_{0}^{1}}y_{s}^{-\gamma}e^{bs}ds
\leqslant
\|\tilde w\|_{\alpha\textrm{-H\"ol};T}(\tau_{0}^{1} - t)^{\alpha}.
\end{displaymath}
On the other hand, the two terms of that sum are positive. Then,
\begin{eqnarray}
 \label{MAJ1MRres}
 y_t & \leqslant & \|\tilde w\|_{\alpha\textrm{-H\"ol};T}(\tau_{0}^{1} - t)^{\alpha}
 \textrm{ and}\\
 \label{MAJ2MRres}
 a(1-\beta)
 \int_{t}^{\tau_{0}^{1}}y_{s}^{-\gamma}e^{bs}ds
 & \leqslant &
 \|\tilde w\|_{\alpha\textrm{-H\"ol};T}(\tau_{0}^{1} - t)^{\alpha}.
\end{eqnarray}
Since $t\in [0,\tau_{0}^{1}[$ has been chosen arbitrarily, inequality (\ref{MAJ1MRres}) is true for every $s\in [t,\tau_{0}^{1}[$ and implies :
\begin{displaymath}
 y_{s}^{-\gamma}\geqslant
 \|\tilde w\|_{\alpha\textrm{-H\"ol};T}^{-\gamma}\left(\tau_{0}^{1} - s\right)^{-\alpha\gamma}.
\end{displaymath}
So
\begin{eqnarray}
 a(1-\beta)
 \int_{t}^{\tau_{0}^{1}}
 y_{s}^{-\gamma}e^{bs}ds
 & \geqslant &
 a(1-\beta)\|\tilde w\|_{\alpha\textrm{-H\"ol};T}^{-\gamma}
 \int_{t}^{\tau_{0}^{1}}
 (\tau_{0}^{1} - s)^{-\alpha\gamma}e^{bs}ds
 \nonumber\\
 \label{MAJ3MRres}
 & \geqslant &
 \frac{a(1-\beta)}{1-\alpha\gamma}
 \|\tilde w\|_{\alpha\textrm{-H\"ol};T}^{-\gamma}\left[
 (\tau_{0}^{1} - t)^{1-\alpha\gamma} -
 \lim_{s\rightarrow\tau_{0}^{1}}
 (\tau_{0}^{1} - s)^{1-\alpha\gamma}\right].
\end{eqnarray}
By inequalities (\ref{MAJ2MRres}) and (\ref{MAJ3MRres}) together :
\begin{equation}\label{MAJ4MRres}
\frac{a(1-\beta)}{1-\alpha\gamma}\left[
(\tau_{0}^{1} - t)^{1-\alpha\gamma} -
\lim_{s\rightarrow\tau_{0}^{1}}
(\tau_{0}^{1} - s)^{1-\alpha\gamma}\right]
\leqslant
\|\tilde w\|_{\alpha\textrm{-H\"ol};T}^{\gamma + 1}(\tau_{0}^{1} - t)^{\alpha}.
\end{equation}
If $\beta\geqslant 1/(1+\alpha) > 1-\alpha$, then $1 -\alpha\gamma\leqslant 0$ and
\begin{displaymath}
\lim_{s\rightarrow\tau_{0}^{1}}
-\frac{1}{1-\alpha\gamma}
\left(\tau_{0}^{1} - s\right)^{1-\alpha\gamma} =
\infty.
\end{displaymath}
If $1/(1+\alpha) > \beta > 1-\alpha$, inequality (\ref{MAJ4MRres}) can be rewritten as
\begin{displaymath}
\frac{a(1-\beta)}{1-\alpha\gamma}
(\tau_{0}^{1} - t)^{1-\alpha(\gamma + 1)}
\leqslant
\|\tilde w\|_{\alpha\textrm{-H\"ol};T}^{\gamma + 1},
\end{displaymath}
but $1-\alpha(\gamma + 1) < 0$ and
\begin{displaymath}
\lim_{t\rightarrow\tau_{0}^{1}}
\frac{1}{1-\alpha\gamma}
\left(\tau_{0}^{1} - t\right)^{1-\alpha(\gamma + 1)} =
\infty.
\end{displaymath}
Therefore, if $\beta > 1-\alpha$, $\tau_{0}^{1}\not\in [0,T]$.
\\
\\
An immediate consequence is that :
\begin{displaymath}
\bigcup_{\varepsilon\in ]0,y_0]}
[0,\tau_{\varepsilon}^{1}]\cap
[0,T] =
[0,T].
\end{displaymath}
Then, (\ref{EDS4}) admits a unique solution on $[0,T]$ by putting :
\begin{displaymath}
y = y^{\varepsilon}
\textrm{ on }
[0,\tau_{\varepsilon}^{1}]\cap
[0,T]
\end{displaymath}
where, $y^{\varepsilon}$ denotes the solution of (\ref{EDS4}) on $[0,\tau_{\varepsilon}^{1}]\cap[0,T]$ for every $\varepsilon\in ]0,y_0]$.
\\
\\
By Lemma \ref{EDS3change}, equation (\ref{DETmre}) admits a unique solution $\tilde\pi_V(0,x_0;w)$ on $[0,T]$, matching with $y^{\gamma + 1}e^{-b.}$.
\\
\\
Finally, since $T > 0$ is chosen arbitrarily, for $w :\mathbb R_+\rightarrow\mathbb R$ locally $\alpha$-H\"older continuous, equation (\ref{DETmre}) admits a unique solution $\tilde\pi_V(0,x_0;w)$ on $\mathbb R_+$ by putting :
\begin{displaymath}
\tilde\pi_V(0,x_0;w) =
\tilde\pi_V(0,x_0;w_{\left|[0,T]\right.})
\textrm{ on $[0,T]$}
\end{displaymath}
for every $T > 0$.
\end{proof}
\noindent
\textbf{Remarks and partial extensions :}
\begin{enumerate}
 \item Note that the statement of Lemma \ref{EDS3change} holds true when $a = 0$, and up to the time $\tau_{0}^{1}$, equation (\ref{DETmre}) has a unique explicit solution :
 \begin{displaymath}
 \forall t\in [0,\tau_{0}^{1}]
 \textrm{, }
 x_t =
 \left(x_{0}^{1-\beta} +
 \tilde w_t\right)^{\gamma + 1}
 e^{-bt}.
 \end{displaymath}
 However, in that case, $\tau_{0}^{1}$ can belong to $[0,T]$. Then, $x$ is matching with the solution of equation (\ref{DETmre}) only locally. It is sufficient for the application in pharmacokinetic provided at Section 5.
 \item For every $\alpha\in ]0,1[$, equation (\ref{EDS4}) admits a unique solution $y$ on $[0,T]$ \mbox{when :}
 \begin{equation}\label{HYPLcase}
 \inf_{s\in [0,T]}
 \tilde w_s > -y_0.
 \end{equation}
 Indeed, for every $t\in [0,\tau_{0}^{1}]$,
 \begin{displaymath}
 y_t -
 a(1-\beta)\int_{0}^{t}
 y_{s}^{-\gamma}e^{bs}ds =
 y_0 +\tilde w_t.
 \end{displaymath}
 Then,
 \begin{displaymath}
 \inf_{s\in [0,\tau_{0}^{1}]}y_s -
 a(1-\beta)\sup_{s\in [0,\tau_{0}^{1}]}\int_{0}^{s}
 y_{u}^{-\gamma}e^{bu}du
 \geqslant
 y_0 +
 \inf_{s\in [0,T]}
 \tilde w_s.
 \end{displaymath}
 Since $y$ is continuous from $[0,\tau_{0}^{1}]$ into $\mathbb R$ with $y_0 > 0$ :
 \begin{displaymath}
 \sup_{s\in [0,\tau_{0}^{1}]}
 \int_{0}^{s}
 y_{u}^{-\gamma}e^{bu}du > 0.
 \end{displaymath}
 Therefore,
 \begin{eqnarray}
  y_t & \geqslant & \inf_{s\in [0,\tau_{0}^{1}]} y_s
  \nonumber\\
  \label{Lcase}
  & \geqslant &
  y_0 +
  \inf_{s\in [0,T]}\tilde w_s > 0
 \end{eqnarray}
 by inequality (\ref{HYPLcase}). Since the right-hand side of inequality (\ref{Lcase}) is not depending on $\tau_{0}^{1}$, that hitting time is not belonging to $[0,T]$.
 \\
 \\
 By Lemma \ref{EDS3change}, equation (\ref{DETmre}) admits also a unique solution on $[0,T]$ when (\ref{HYPLcase}) is true.
 \item If $\tau_{0}^{1}\in [0,T]$, necessarily :
 \begin{displaymath}
 a(1-\beta)
 \|\tilde w\|_{\alpha\textrm{-H\"ol};T}^{-\gamma}
 \int_{t}^{\tau_{0}^{1}}
 (\tau_{0}^{1} - s)^{-\alpha\gamma}ds
 \leqslant
 \|\tilde w\|_{\alpha\textrm{-H\"ol};T}
 (\tau_{0}^{1} - t)^{\alpha}
 \end{displaymath}
 for every $t\in [0,\tau_{0}^{1}[$.
 \\
 \\
 Then, when $\beta = 1 -\alpha$, $1 -\alpha\gamma = \alpha$ and by \cite{FV08}, Theorem 6.8 :
 \begin{eqnarray*}
  a & \leqslant &
  \|\tilde w\|_{\alpha\textrm{-H\"ol};T}\\
  & \leqslant &
  C(\sigma,\alpha,b)
  \|w\|_{\alpha\textrm{-H\"ol};T}^{1/\alpha}
 \end{eqnarray*}
 with $C(\sigma,\alpha,b) = (\sigma b\alpha^2)^{1/\alpha}e^{bT}$.
 \\
 \\
 Therefore, $\tilde\pi_V(0,x_0;w)$ is defined on $[0,T]$ when $a > C(\sigma,\alpha,b)\|w\|_{\alpha\textrm{-H\"ol};T}^{1/\alpha}$.
\end{enumerate}
%


%
\subsection{Upper-bound for the solution and regularity of the It\^o map}
Under assumptions \ref{Eass} and \ref{HYPw}, we provide an explicit upper-bound for $\|\tilde\pi_V(0,x_0; w)\|_{\infty;T}$ and, show continuity and differentiability results for the It\^o map :
%


%
\begin{proposition}\label{MRint}
Under assumptions \ref{Eass} and \ref{HYPw}, for $a > 0$ and $b\geqslant 0$, with any initial condition $x_0 > 0$,
\begin{eqnarray*}
 \|\tilde\pi_V(0,x_0;w)\|_{\infty;T}
 & \leqslant &
 \left[
 x_{0}^{1-\beta} + a(1-\beta)e^{bT}x_{0}^{-\beta}T +\right.\\
 & &
 \left.
 \sigma(b\vee 2)(1-\beta)(1 + T)e^{b(1-\beta)T}\|w\|_{\infty;T}
 \right]^{\gamma + 1}.
\end{eqnarray*}
\end{proposition}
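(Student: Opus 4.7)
The natural approach is to reduce everything to a bound on $y$, the solution of (\ref{EDS4}) with $y_0 = x_0^{1-\beta}$. Since $b\geq 0$, Lemma \ref{EDS3change} gives $x_t = y_t^{\gamma+1}e^{-bt} \leq y_t^{\gamma+1}$, so it suffices to produce an additive upper bound on $\sup_{t\in[0,T]} y_t$; the claim then follows by raising to the $(\gamma+1)$-th power and using the identities $y_0 = x_0^{1-\beta}$ and $y_0^{-\gamma} = x_0^{-\beta}$ (since $\gamma(1-\beta)=\beta$).

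Write $y_t = y_0 + \phi(t) + \tilde w_t$, where
\[
\phi(t) = a(1-\beta)\int_0^t y_s^{-\gamma} e^{bs}\, ds
\]
is continuous, nondecreasing, and vanishes at $0$. The main difficulty is that $y_s^{-\gamma}$ may blow up when $y_s$ is small, so a crude estimate of $\phi(T)$ by $\sup_s y_s^{-\gamma}$ fails. The key idea is a splitting argument: introduce the threshold time
\[
\tau = \inf\bigl\{s \in [0,T] : \phi(s) \geq -\inf_{u\in [0,T]} \tilde w_u\bigr\},
\]
noting $-\inf_u\tilde w_u \geq 0$ since $\tilde w_0 = 0$. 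For $s \geq \tau$, one has $\phi(s) \geq -\inf_u \tilde w_u \geq -\tilde w_s$, so $y_s \geq y_0$ and hence $y_s^{-\gamma} \leq y_0^{-\gamma}$. Handling the case $\tau > T$ separately (where $\phi(T) < -\inf_u \tilde w_u$), one obtains in all cases
\[
\phi(T) \leq -\inf_{u \in [0,T]} \tilde w_u + a(1-\beta) y_0^{-\gamma} e^{bT} T,
\]
and combining with $\tilde w_t \leq \sup_u \tilde w_u$ yields, for every $t\in[0,T]$,
\[
y_t \leq y_0 + \bigl(\sup_u \tilde w_u - \inf_u \tilde w_u\bigr) + a(1-\beta) y_0^{-\gamma} e^{bT} T.
\]

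It remains to bound the oscillation of $\tilde w$. Since $\vartheta$ is smooth, Young's integration by parts yields $\tilde w_t - \tilde w_s = \vartheta_t w_t - \vartheta_s w_s - \int_s^t w_u \vartheta'_u\, du$, from which a direct estimate together with $\int_s^t \vartheta'_u du = \vartheta_t - \vartheta_s$ gives $|\tilde w_t - \tilde w_s| \leq 2\vartheta_t \|w\|_{\infty;T} \leq 2\sigma(1-\beta) e^{b(1-\beta)T}\|w\|_{\infty;T}$. Since $(b \vee 2)(1+T) \geq 2$, the oscillation is dominated by $\sigma(b\vee 2)(1-\beta)(1+T) e^{b(1-\beta)T}\|w\|_{\infty;T}$, and substituting back and raising to the $(\gamma+1)$-th power yields the claimed estimate on $\tilde\pi_V(0,x_0;w)_t$.

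The main obstacle is in the splitting step: bounding $\phi(T)$ and $\tilde w_t$ separately by $\|\tilde w\|_{\infty;T}$ would produce an extra factor $2$ and take the coefficient outside the target form. Estimating the full range $\sup_u\tilde w_u - \inf_u\tilde w_u$ as a single quantity (whose direct integration-by-parts bound is $2\vartheta_T\|w\|_{\infty;T}$, not $2\|\tilde w\|_{\infty;T}\leq 4\vartheta_T\|w\|_{\infty;T}$) is what absorbs the loss exactly into the factor $(b\vee 2)(1+T)$ of the statement.
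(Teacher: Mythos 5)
Your proof is correct. You take the same high-level strategy as the paper — reduce to a bound on $y$, isolate the region where $y_s^{-\gamma}$ could be large via a hitting time, and bound the oscillation of $\tilde w$ by integration by parts — but your hitting time is different. The paper uses $\tau_{y_0}^{2} = \sup\{t\in[0,T]: y_t\leqslant y_0\}$, the last crossing of the level $y_0$ by $y$ itself, and then treats $t < \tau_{y_0}^{2}$ and $t\geqslant\tau_{y_0}^{2}$ separately, getting in both cases $y_t\leqslant y_0 + a(1-\beta)e^{bT}y_0^{-\gamma}T + |\tilde w_t - \tilde w_{\tau_{y_0}^{2}}|$; you instead use $\tau = \inf\{s: \phi(s)\geqslant -\inf_u\tilde w_u\}$, a threshold for the drift integral $\phi$, which forces $y_s\geqslant y_0$ for $s\geqslant\tau$ and lets you bound $\phi(T)$ once and for all, then add $\tilde w_t\leqslant\sup_u\tilde w_u$. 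Your variant has the small virtue of making the "avoid a spurious factor $2$" issue explicit — you see the oscillation $\sup\tilde w-\inf\tilde w$ appear as a single quantity — while the paper achieves the same effect by always measuring $\tilde w$ relative to the crossing point $\tau_{y_0}^{2}$ rather than to $0$. Your integration-by-parts bound $2\vartheta_T\|w\|_{\infty;T}$ for the oscillation is actually slightly sharper than the paper's $[2+b(1-\beta)T]\vartheta_T\|w\|_{\infty;T}$ (the paper's includes the $\int\vartheta'$ term explicitly without cancelling it against the boundary terms), but both are dominated by the claimed $(b\vee 2)(1+T)$ factor, so the discrepancy is immaterial.
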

%


%
\begin{proof}
Consider $y_0 = x_{0}^{1-\beta}$, $y$ the solution of (\ref{EDS4}) with initial condition $y_0$ and
\begin{displaymath}
\tau_{y_0}^{2} =
\sup\left\{
t\in [0,T] :
y_t\leqslant y_0\right\}.
\end{displaymath}
On one hand, we consider the two following cases :
\begin{enumerate}
 \item If $t < \tau_{y_0}^{2}$ :
 \begin{displaymath}
 y_{\tau_{y_0}^{2}} - y_t =
 a(1-\beta)\int_{t}^{\tau_{y_0}^{2}}
 y_{s}^{-\gamma}e^{bs}ds +
 \tilde w_{\tau_{y_0}^{2}} -
 \tilde w_t. 
 \end{displaymath}
 Then, by definition of $\tau_{y_0}^{2}$ :
 \begin{equation}\label{EQMRint}
 y_t +
 a(1-\beta)\int_{t}^{\tau_{y_0}^{2}}
 y_{s}^{-\gamma}e^{bs}ds =
 y_0 +
 \tilde w_t -
 \tilde w_{\tau_{y_0}^{2}}.
 \end{equation}
 Therefore, since each term of the sum in the left-hand side of equality (\ref{EQMRint}) are positive from Proposition \ref{MRres} :
 \begin{displaymath}
 0 < y_t\leqslant
 y_0 +
 |\tilde w_t -
 \tilde w_{\tau_{y_0}^{2}}|.
 \end{displaymath}
 \item If $t\geqslant\tau_{y_0}^{2}$ ; by definition of $\tau_{y_0}^{2}$, $y_t\geqslant y_0$ and then, $y_{t}^{-\gamma}\leqslant y_{0}^{-\gamma}$. Therefore,
 \begin{displaymath}
 y_0\leqslant y_t
 \leqslant
 y_0 + a(1-\beta)e^{bT}y_{0}^{-\gamma}T + |\tilde w_t - \tilde w_{\tau_{y_0}^{2}}|.
 \end{displaymath}
\end{enumerate}
On the other hand, by using the integration by parts formula, for every $t\in [0,T]$,
\begin{eqnarray*}
 |\tilde w_t - \tilde w_{\tau_{y_0}^{2}}|
 & = &
 \sigma(1-\beta)\left|\int_{\tau_{y_0}^{2}}^{t}
 e^{b(1-\beta)s}dw_s\right|\\
 & = &
 \sigma(1-\beta)\left|
 e^{b(1-\beta)t}w_t -
 e^{b(1-\beta)\tau_{y_0}^{2}}w_{\tau_{y_0}^{2}} -
 b(1-\beta)\int_{\tau_{y_0}^{2}}^{t}
 e^{b(1-\beta)s}w_sds
 \right|\\
 & \leqslant &
 \sigma(1-\beta)\left[
 2 + b(1-\beta)T\right]
 e^{b(1-\beta)T}\|w\|_{\infty;T}\\
 & \leqslant &
 \sigma(b\vee 2)(1-\beta)(1 + T)e^{b(1-\beta)T}\|w\|_{\infty;T},
\end{eqnarray*}
because $(1-\beta)^2\leqslant 1-\beta\leqslant 1$.
\\
\\
Therefore, by putting cases 1 and 2 together ; for every $t\in [0,T]$,
\begin{equation}\label{MAJMRint}
0 < y_t
\leqslant
y_0 + a(1-\beta)e^{bT}y_{0}^{-\gamma}T +
\sigma(b\vee 2)(1-\beta)(1 + T)e^{b(1-\beta)T}\|w\|_{\infty;T}.
\end{equation}
That achieves the proof because, $\tilde\pi_V(0,x_0;w) = y^{\gamma + 1}e^{-b.}$ and the right hand side of inequality (\ref{MAJMRint}) is not depending on $t$.
\end{proof}
\noindent
\textbf{Remark.} In particular, by Proposition \ref{MRint}, $\|\tilde\pi_V(0,x_0;w)\|_{\infty;T}$ does not explode when $a\rightarrow 0$ or/and $b\rightarrow 0$.
\\
\\
\textbf{Notation.} In the sequel, for every $R > 0$,
\begin{displaymath}
B_{\alpha}(0,R) :=
\left\{w\in C^{\alpha\textrm{-H\"ol}}([0,T];\mathbb R) :
\left\|w\right\|_{\alpha\textrm{-H\"ol};T}\leqslant R\right\}.
\end{displaymath}
%


%
\begin{proposition}\label{MRcont}
Under Assumption \ref{Eass}, for $a > 0$ and $b\geqslant 0$, $\tilde\pi_V(0,.)$ is a continuous map from $\mathbb R_{+}^{*}\times C^{\alpha\textrm{-H\"ol}}([0,T];\mathbb R)$ into $C^0([0,T];\mathbb R)$. Moreover, $\tilde\pi_V(0,.)$ is Lipschitz continuous from $[r,R_1]\times B_{\alpha}(0,R_2)$ into $C^0([0,T];\mathbb R)$ for every $R_1 > r > 0$ and $R_2 > 0$.
\end{proposition}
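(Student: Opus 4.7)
The plan is to reduce the problem to the auxiliary equation (\ref{EDS4}) via Lemma \ref{EDS3change}: since $\tilde\pi_V(0,x_0;w)_t = y_t^{\gamma+1}e^{-bt}$ with $y$ solving (\ref{EDS4}) and initial condition $y_0 = x_0^{1-\beta}$, and since the maps $x_0\mapsto x_0^{1-\beta}$ on $[r,R_1]$ and $y\mapsto y^{\gamma+1}e^{-b\cdot}$ on bounded subsets of positive continuous functions are Lipschitz, it suffices to prove that the solution map $\Psi:(y_0,w)\mapsto y$ of (\ref{EDS4}) is continuous from $\mathbb R_+^*\times C^{\alpha\textrm{-H\"ol}}([0,T];\mathbb R)$ into $C^0([0,T];\mathbb R)$ and Lipschitz on every set of the form $[r',R_1']\times B_\alpha(0,R_2)$ with $R_1'>r'>0$. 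For such a compact $K$, Proposition \ref{MRint} provides a uniform upper bound $y_t\leqslant M(K)$.

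The heart of the argument is the matching uniform lower bound $y_t\geqslant m(K)>0$, which I would obtain by replaying the estimate of Proposition \ref{MRres}, with $\tau_0^1$ replaced by the time $t^*\in[0,T]$ at which $y$ attains its minimum $\mu$. If $t^*=0$ then $\mu=y_0\geqslant r'$; otherwise, the identity
\[
y_t + a(1-\beta)\int_t^{t^*}y_s^{-\gamma}e^{bs}\,ds = \mu + \tilde w_t - \tilde w_{t^*}
\]
together with the $\alpha$-H\"older regularity of $\tilde w$ yields
\[
a(1-\beta)\int_0^{t^*-t}\left[\mu + \|\tilde w\|_{\alpha\textrm{-H\"ol};T}u^\alpha\right]^{-\gamma}du \leqslant \mu + \|\tilde w\|_{\alpha\textrm{-H\"ol};T}(t^*-t)^\alpha
\]
for every $t\in[0,t^*]$. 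Since Assumption \ref{Eass} gives $\alpha(\gamma+1)>1$, the same case analysis as in Proposition \ref{MRres} (rescaling $u$ by $(\mu/\|\tilde w\|_{\alpha\textrm{-H\"ol};T})^{1/\alpha}$) forces $\mu$ to be bounded below by a positive constant depending only on $a$, $b$, $\sigma$, $\beta$, $T$ and $R_2$, using also $\|\tilde w\|_{\alpha\textrm{-H\"ol};T}\leqslant C(\sigma,\beta,b,T)R_2$.

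With $m\leqslant y\leqslant M$ uniformly on $K$, the map $u\mapsto u^{-\gamma}$ is globally Lipschitz on $[m,M]$ with constant $\gamma m^{-\gamma-1}$. Subtracting equation (\ref{EDS4}) for two inputs $(y_0^i,w^i)\in K$ and invoking Gronwall's lemma then yields $\|y^1-y^2\|_{\infty;T}\leqslant C_K\bigl(|y_0^1-y_0^2| + \|\tilde w^1-\tilde w^2\|_{\infty;T}\bigr)$, and the Young integral continuity estimate (\cite{FV08}, Theorem 6.8) provides $\|\tilde w^1-\tilde w^2\|_{\infty;T}\leqslant C(\sigma,\beta,b,T)\|w^1-w^2\|_{\alpha\textrm{-H\"ol};T}$; chaining these with the reduction of the first paragraph gives Lipschitz continuity of $\tilde\pi_V(0,\cdot)$ on $K$. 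Continuity at an arbitrary point $(x_0,w)\in\mathbb R_+^*\times C^{\alpha\textrm{-H\"ol}}([0,T];\mathbb R)$ follows at once, since any such point has a neighborhood contained in a compact of the above form. The main obstacle is the uniform lower bound, where the singular drift forces me to strengthen Proposition \ref{MRres}'s pointwise positivity into a quantitative bound uniform over $K$.
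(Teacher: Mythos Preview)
Your approach is correct, but it differs substantively from the paper's. The paper never establishes a uniform lower bound on $y$ and never invokes Gronwall. Instead it exploits the monotonicity of the singular drift $u\mapsto u^{-\gamma}$ through a crossing-time comparison: on any subinterval where $y^1\geqslant y^2$, the integrand $(y^1)^{-\gamma}-(y^2)^{-\gamma}$ is nonpositive, so the drift contribution to $y^1-y^2$ has the right sign to be discarded. Decomposing $[0,T]$ at the first crossing time $\tau^3$ and, for $t\geqslant\tau^3$, at the last crossing time before $t$, the paper obtains directly
\[
\|y^1-y^2\|_{\infty;T}\leqslant |y_0^1-y_0^2|+2\|\tilde w^1-\tilde w^2\|_{\infty;T},
\]
a global Lipschitz estimate for the auxiliary solution map $I$ with an explicit constant that does not depend on $K$. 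Only the outer map $y\mapsto e^{-b\cdot}y^{\gamma+1}$ then requires the upper bound from Proposition~\ref{MRint} and introduces the dependence on $r,R_1,R_2$.

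Your route via a quantitative lower bound and Gronwall is sound: the rescaling you sketch does force $\mu\geqslant m(K)>0$, because $\alpha(\gamma+1)>1$ under Assumption~\ref{Eass} makes the drift integral dominate as $\mu\to 0$ (one small point: the bound also depends on $r'$, through the case $t^*<(\mu/\|\tilde w\|_{\alpha\textrm{-H\"ol}})^{1/\alpha}$, which forces $\mu>y_0/2\geqslant r'/2$). Your argument has the merit of producing the lower bound as a byproduct; the paper's comparison argument is shorter, avoids the Gronwall exponential in the Lipschitz constant, and sidesteps entirely what you correctly identify as the main obstacle.
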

%


%
\begin{proof}
Consider $(x_{0}^{1},w^1)$ and $(x_{0}^{2},w^2)$ belonging to $\mathbb R_{+}^{*}\times C^{\alpha\textrm{-H\"ol}}([0,T];\mathbb R)$.
\\
\\
For $i = 1,2$, we put $y_{0}^{i} = (x_{0}^{i})^{1-\beta}$ and $y^i = I(y_{0}^{i},\tilde w^i)$ where,
\begin{displaymath}
\forall t\in [0,T]
\textrm{, }
\tilde w_{t}^{i} =
\int_{0}^{t}\vartheta_sdw_{s}^{i}
\end{displaymath}
and, with notations of equation (\ref{EDS4}), $I$ is the map defined by :
\begin{displaymath}
I(y_0,\tilde w) = y_0 + a(1-\beta)\int_{0}^{.}I_{s}^{-\gamma}(y_0,\tilde w)e^{bs}ds + \tilde w.
\end{displaymath}
We also put :
\begin{displaymath}
\tau^3 =
\inf\left\{s\in [0,T] : y_{s}^{1} = y_{s}^{2}\right\}.
\end{displaymath}
On one hand, we consider the two following cases :
\begin{enumerate}
 \item Consider $t\in [0,\tau^3]$ and suppose that $y_{0}^{1}\geqslant y_{0}^{2}$.
 \\
 \\
 Since $y^1$ and $y^2$ are continuous on $[0,T]$ by construction, for every $s\in [0,\tau^3]$, $y_{s}^{1}\geqslant y_{s}^{2}$ and then,
 \begin{displaymath}
 \left(y_{s}^{1}\right)^{-\gamma} -
 \left(y_{s}^{2}\right)^{-\gamma}\leqslant 0.
 \end{displaymath}
 Therefore,
 \begin{eqnarray*}
  \left|y_{t}^{1} - y_{t}^{2}\right| & = &
  y_{t}^{1} - y_{t}^{2}\\
  & = &
  y_{0}^{1} - y_{0}^{2} +
  a(1-\beta)\int_{0}^{t}e^{bs}[
  \left(y_{s}^{1}\right)^{-\gamma} -
  \left(y_{s}^{2}\right)^{-\gamma}]ds +
  \tilde w_{t}^{1} - \tilde w_{t}^{2}\\
  & \leqslant &
  |y_{0}^{1} - y_{0}^{2}| +
  \|\tilde w^1 - \tilde w^2\|_{\infty;T}.
 \end{eqnarray*}
 Symmetrically, one can show that this inequality is still true when $y_{0}^{1}\leqslant y_{0}^{2}$.
 \item Consider $t\in [\tau_3,T]$,
 \begin{displaymath}
 \tau^3(t) =
 \sup\left\{s\in\left[\tau^3,t\right] : y_{s}^{1} = y_{s}^{2}\right\}
 \end{displaymath}
 and suppose that $y_{t}^{1}\geqslant y_{t}^{2}$.
 \\
 \\
 Since $y^1$ and $y^2$ are continuous on $[0,T]$ by construction, for every $s\in [\tau^3(t),t]$, $y_{s}^{1}\geqslant y_{s}^{2}$ and then,
 \begin{displaymath}
 \left(y_{s}^{1}\right)^{-\gamma} -
 \left(y_{s}^{2}\right)^{-\gamma}\leqslant 0.
 \end{displaymath}
 Therefore,
 \begin{eqnarray*}
  \left|y_{t}^{1} - y_{t}^{2}\right| & = &
  y_{t}^{1} - y_{t}^{2}\\
  & = &
  a(1-\beta)
  \int_{\tau^3(t)}^{t}e^{bs}[
  \left(y_{s}^{1}\right)^{-\gamma} -
  \left(y_{s}^{2}\right)^{-\gamma}]ds +
  \tilde w_{t}^{1} - \tilde w_{t}^{2} -
  [\tilde w_{\tau^3(t)}^{1} - \tilde w_{\tau^3(t)}^{2}]\\
  & \leqslant &
  2\|\tilde w^1 - \tilde w^2\|_{\infty;T}.
 \end{eqnarray*}
 Symmetrically, one can show that this inequality is still true when $y_{t}^{1}\leqslant y_{t}^{2}$.
\end{enumerate}
By putting these cases together and since the obtained upper-bounds are not depending on $t$ :
\begin{equation}\label{yLip}
\|y^1 - y^2\|_{\infty;T}
\leqslant
|y_{0}^{1} - y_{0}^{2}| +
2T^{\alpha}\|\tilde w^1 - \tilde w^2\|_{\alpha\textrm{-H\"ol};T}. 
\end{equation}
Then, $I$ is continuous from $\mathbb R_{+}^{*}\times C^{\alpha\textrm{-H\"ol}}([0,T];\mathbb R)$ into $C^0([0,T];\mathbb R)$.
\\
\\
For any $\alpha$-H\"older continuous function $w : [0,T]\rightarrow\mathbb R$, from Lemma \ref{EDS3change} and Proposition \ref{MRres} :
\begin{displaymath}
\tilde\pi_V(0,x_0;w) =
e^{-b.}I^{\gamma + 1}\left[x_{0}^{1-\beta},\mathcal Y(\vartheta,w)\right].
\end{displaymath}
Moreover, by \cite{FV08}, Proposition 6.12, $\mathcal Y(\vartheta,.)$ is continuous from $C^{\alpha\textrm{-H\"ol}}([0,T];\mathbb R)$ into itself. Therefore, $\tilde\pi_V(0,.)$ is continuous from $\mathbb R_{+}^{*}\times C^{\alpha\textrm{-H\"ol}}([0,T];\mathbb R)$ into $C^0([0,T];\mathbb R)$ by composition.
\\
\\
On the other hand, consider $R_1 > r > 0$ and $R_2 > 0$. By Proposition \ref{MRint}, there exists $C > 0$ such that :
\begin{displaymath}
\forall (x_0,w)\in [r,R_1]\times B_{\alpha}(0,R_2)
\textrm{, }
\|I[x_{0}^{1-\beta},\mathcal Y(\vartheta,w)]\|_{\infty;T}
\leqslant
C(r^{-\gamma} + R_1 + R_2).
\end{displaymath}
Then, for every $(x_{0}^{1},w^1),(x_{0}^{2},w^2)\in [r,R_1]\times B_{\alpha}(0,R_2)$,
\begin{eqnarray*}
 \|\tilde\pi_V(0,x_0;w^1) - \tilde\pi_V(0,x_0;w^2)\|_{\infty;T}
 & \leqslant &
 (\gamma + 1)C^{\gamma}(r^{-\gamma} + R_1 + R_2)^{\gamma}\times\\
 & &
 \left[(1-\beta)r^{-\beta}|x_{0}^{1} - x_{0}^{2}|\right. +\\
 & &
 \left.
 2T^{\alpha}\|\mathcal Y(\vartheta,w^1) -\mathcal Y(\vartheta,w^2)\|_{\alpha\textrm{-H\"ol};T}\right]
\end{eqnarray*}
by inequality (\ref{yLip}). Since $\mathcal Y(\vartheta,.)$ is Lipschitz continuous from bounded sets of $C^{\alpha\textrm{-H\"ol}}([0,T];\mathbb R)$ into $C^{\alpha\textrm{-H\"ol}}([0,T];\mathbb R)$ (cf. \cite{FV08}, Proposition 6.11), that achieves the proof.
\end{proof}
\noindent
In order to study the regularity of the solution of equation (\ref{DETmre}) with respect to parameters $a,b\geqslant 0$ characterizing the vector field $V$, let's denote by $x(a,b)$ (resp. $y(a,b)$) the solution of equation (\ref{DETmre}) (resp. (\ref{EDS4})) up to $\tau_{0}^{1}\wedge T$.
%


%
\begin{proposition}\label{ITOmono}
Under assumptions \ref{Eass} and \ref{HYPw}, for every $a,b\geqslant 0$, $x(0,b)\leqslant x(a,b)\leqslant x(a,0)$.
\end{proposition}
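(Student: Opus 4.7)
The plan is to exploit the Doss--Sussmann substitution of Lemma \ref{EDS3change}, writing $x(a,b)_t = y(a,b)_t^{\gamma+1}e^{-bt}$ where $y(a,b)$ is the solution of (\ref{EDS4}) driven by the parameters $(a,b)$ and the common initial value $y_0 = x_0^{1-\beta}$. Proposition \ref{MRres} guarantees that $y(a,b)$ is strictly positive on $[0,T]$ as soon as $a > 0$, which legitimizes all the exponents and reciprocals below. The two sides of the claimed sandwich are then handled by independent subtraction arguments.

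For the lower bound $x(0,b) \leqslant x(a,b)$, the key observation is that the noise $\tilde w$ appearing in (\ref{EDS4}) depends only on $b$, not on $a$. Writing (\ref{EDS4}) for $y(a,b)$ and for $y(0,b)$ and subtracting cancels the common term $y_0 + \tilde w_t$ and leaves
\begin{displaymath}
y(a,b)_t - y(0,b)_t = a(1-\beta)\int_0^t y(a,b)_s^{-\gamma} e^{bs}\,ds \geqslant 0,
\end{displaymath}
by positivity of $y(a,b)$. Since $\gamma+1 \geqslant 1$ and both functions are positive on the common domain, raising to the power $\gamma+1$ and multiplying by $e^{-bt} > 0$ preserves the inequality and gives $x(0,b) \leqslant x(a,b)$.

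For the upper bound $x(a,b) \leqslant x(a,0)$ the situation is more subtle, because $b$ enters (\ref{EDS4}) both through the drift weight $e^{bs}$ and through the noise $\tilde w^{(b)}_t = \sigma(1-\beta)\int_0^t e^{b(1-\beta)s}\,dw_s$, so a direct subtraction is not signed. I would strip the $b$-dependence from the noise by introducing
\begin{displaymath}
\chi(a,b)_t := y(a,b)_t\, e^{-b(1-\beta)t}.
\end{displaymath}
A chain-rule computation (valid since $e^{-b(1-\beta)t}$ is a smooth deterministic factor), together with the identities $(1-\beta)\gamma = \beta$ and $(1-\beta)(\gamma+1) = 1$, yields
\begin{displaymath}
\chi(a,b)_t = y_0 + \int_0^t \left[a(1-\beta)\chi(a,b)_s^{-\gamma} - b(1-\beta)\chi(a,b)_s\right]\,ds + \sigma(1-\beta)\,w_t,
\end{displaymath}
together with the clean identity $x(a,b)_t = \chi(a,b)_t^{\gamma+1}$. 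The crucial point is that the rough-path integral has collapsed into the elementary function $\sigma(1-\beta)w_t$, which no longer depends on $b$.

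It then suffices to show $\chi(a,b) \leqslant \chi(a,0)$. Subtracting the two $\chi$-equations kills the common $y_0 + \sigma(1-\beta)w_t$, so $\Delta := \chi(a,0) - \chi(a,b)$ satisfies the \emph{pure} ODE
\begin{displaymath}
\dot{\Delta}_t = a(1-\beta)\left[\chi(a,0)_t^{-\gamma} - \chi(a,b)_t^{-\gamma}\right] + b(1-\beta)\chi(a,b)_t,\quad \Delta_0 = 0.
\end{displaymath}
A classical comparison argument then finishes the proof: if $\Delta$ ever became negative, the last zero $t_1$ of $\Delta$ before that would satisfy $\chi(a,0)_{t_1} = \chi(a,b)_{t_1}$, hence $\dot{\Delta}_{t_1} = b(1-\beta)\chi(a,b)_{t_1} > 0$ when $b > 0$; continuity of $\dot\Delta$ then forces $\Delta$ to be strictly increasing through $t_1$, contradicting its immediate negativity just after. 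The case $b = 0$ is trivial. Raising $\chi(a,b) \leqslant \chi(a,0)$ to the power $\gamma+1$ concludes. The main obstacle is really the derivation of the $\chi$-equation: once one notices that a second Doss--Sussmann-type change of variable absorbs the exponential factor into a noise term independent of $b$, the comparison itself is routine ODE analysis.
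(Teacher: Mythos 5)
Your proposal is correct and follows essentially the same route as the paper. For the lower bound you subtract the two $y$-equations exactly as the paper does; for the upper bound your auxiliary process $\chi(a,b) = y(a,b)\,e^{-b(1-\beta)\cdot}$ is precisely the paper's $z(a,b) = x^{1-\beta}(a,b)$ (the identities $(1-\beta)(\gamma+1)=1$ and $(1-\beta)\gamma=\beta$ confirm this), the $\chi$-equation you derive is the paper's $z$-equation obtained via Theorem \ref{CVformula}, and your first-crossing/derivative-sign argument is the differential form of the paper's integral-form contradiction at the crossing interval $[t_1,t_2]$.
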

%


%
\begin{proof}
On one hand, consider $a\geqslant 0$, $b > 0$ and $t\in [0,\tau_{0}^{1}\wedge T]$ :
\begin{displaymath}
y_t(a,b) -y_t(0,b) =
a(1-\beta)\int_{0}^{t}
y_{s}^{-\gamma}(a,b)e^{bs}ds
\geqslant 0,
\end{displaymath}
because $y_s(a,b)\geqslant 0$ for every $s\in [0,T]$ by Proposition \ref{MRres}.
\\
\\
Then, by Lemma \ref{EDS3change} :
\begin{displaymath}
x(0,b)\leqslant x(a,b).
\end{displaymath}
On the other hand, consider $a > 0$, $b\geqslant 0$, $z(a,b) = x^{1-\beta}(a,b)$ and $t_1,t_2\in [0,\tau_{0}^{1}\wedge T]$ such that : $t_1 < t_2$, $x_{t_1}(a,0) = x_{t_1}(a,b)$ and $x_s(a,0) < x_s(a,b)$ for every $s\in [t_1,t_2]$. As at Lemma \ref{EDS3change}, by the change of variable formula (Theorem \ref{CVformula}), for every $t\in [t_1,t_2]$,
\begin{eqnarray*}
 z_t(a,0) - z_t(a,b) & = &
 z_t(a,0) - z_{t_1}(a,0) -
 [z_t(a,b) - z_{t_1}(a,b)]\\
 & = &
 a(1 -\beta)
 \int_{t_1}^{t}
 [x_{s}^{-\beta}(a,0) - x_{s}^{-\beta}(a,b)]ds +\\
 & &
 b(1-\beta)
 \int_{t_1}^{t}x_{s}^{-\beta}(a,b)ds\\
 & \geqslant &
 a(1 -\beta)
 \int_{t_1}^{t}
 [x_{s}^{-\beta}(a,0) - x_{s}^{-\beta}(a,b)]ds,
\end{eqnarray*}
because $x_s(a,b)\geqslant 0$ for every $s\in [t_1,t]$ by Proposition \ref{MRres}.
\\
\\
Since $x_s(a,0) < x_s(a,b)$ for every $s\in [t_1,t_2]$ by assumption, necessarily :
\begin{displaymath}
z_t(a,0) - z_t(a,b) < 0
\end{displaymath}
and
\begin{displaymath}
\int_{t_1}^{t}\left[
x_{s}^{-\beta}(a,0) -x_{s}^{-\beta}(a,b)
\right]ds
\geqslant 0.
\end{displaymath}
Therefore, it's impossible, and for every $t\in [0,\tau_{0}^{1}\wedge T]$, $x_t(a,0)\geqslant x_t(a,b)$.
\end{proof}
%


%
\begin{proposition}\label{ITOVFcont}
Under assumptions \ref{Eass} and \ref{HYPw}, $(a,b)\mapsto x(a,b)$ is a continuous map from $(\mathbb R_{+}^{*})^2$ into $C^0([0,T];\mathbb R)$.
\end{proposition}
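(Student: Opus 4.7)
The plan is to first reduce continuity of $x$ to continuity of $y$: by Lemma \ref{EDS3change} and Proposition \ref{MRres}, $x(a,b) = y(a,b)^{\gamma+1}e^{-b\cdot}$, and since Proposition \ref{MRint} yields a uniform upper bound for $y$ on any compact subset of $(\mathbb R_{+}^{*})^2$, the map $(y,b)\mapsto y^{\gamma+1}e^{-b\cdot}$ is Lipschitz on the relevant range; continuity of $(a,b)\mapsto y(a,b)$ from $(\mathbb R_{+}^{*})^2$ into $C^0([0,T];\mathbb R)$ therefore suffices. Fix $(a_0,b_0)\in(\mathbb R_{+}^{*})^2$. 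The driving signal $\tilde w^b_t = \sigma(1-\beta)\int_0^t e^{b(1-\beta)s}dw_s$ itself depends on $b$, but since $b\mapsto e^{b(1-\beta)\cdot}$ is continuous from $\mathbb R_+$ into $C^{\alpha\textrm{-H\"ol}}([0,T];\mathbb R)$, the continuity of the Young integral (\cite{FV08}, Theorem 6.8) implies that $b\mapsto\tilde w^b$ is continuous into $C^0([0,T];\mathbb R)$.

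The main obstacle is that $u\mapsto u^{-\gamma}$ is only locally Lipschitz and explodes at $0$, so a Gronwall estimate requires a positive lower bound on $y(a,b)$ that is uniform for $(a,b)$ near $(a_0,b_0)$. To secure this, let $m_0 = \inf_{t\in[0,T]}y_t(a_0,b_0)$, which is strictly positive since, by Proposition \ref{MRres}, $y(a_0,b_0)$ is continuous and never vanishes on the compact $[0,T]$. Set $\varepsilon = m_0/2$ and, for $(a,b)$ ranging over a compact neighborhood $K$ of $(a_0,b_0)$ inside $(\mathbb R_{+}^{*})^2$, introduce the hitting time
$$\tau(a,b) = \inf\{t\in[0,T] : y_t(a,b)\leqslant\varepsilon\},\quad\inf\emptyset := +\infty.$$
On $[0,\tau(a,b)\wedge T]$, both $y(a,b)$ and $y(a_0,b_0)$ stay above $\varepsilon$, and $u\mapsto u^{-\gamma}$ is Lipschitz there with constant $\gamma\varepsilon^{-\gamma-1}$.

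Decomposing $y_t(a,b)-y_t(a_0,b_0)$ via the algebraic identity
$$ae^{bs}u^{-\gamma}-a_0e^{b_0s}v^{-\gamma} = ae^{bs}(u^{-\gamma}-v^{-\gamma}) + (ae^{bs}-a_0e^{b_0s})v^{-\gamma},$$
and applying Gronwall's lemma on $[0,\tau(a,b)\wedge T]$, one gets an estimate of the form
$$\sup_{t\in[0,\tau(a,b)\wedge T]}|y_t(a,b)-y_t(a_0,b_0)|\leqslant C_K\bigl(|a-a_0|+|b-b_0|+\|\tilde w^b-\tilde w^{b_0}\|_{\infty;T}\bigr),$$
with $C_K$ depending only on $K$, $\varepsilon$, $T$ and $y(a_0,b_0)$. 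Shrinking $K$ so that the right-hand side remains strictly below $\varepsilon$ forces $y_t(a,b)\geqslant y_t(a_0,b_0)-\varepsilon\geqslant\varepsilon$ on $[0,\tau(a,b)\wedge T]$, which by continuity of $y(a,b)$ rules out $\tau(a,b)\leqslant T$. The estimate therefore extends to the whole of $[0,T]$ and tends to $0$ as $(a,b)\to(a_0,b_0)$, yielding continuity of $y$ and hence of $x$.
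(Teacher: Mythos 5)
Your proof is correct, but it takes a genuinely different route from the paper's. The paper reuses the crossing-time device from Proposition \ref{MRcont}: fixing $t$ and looking at the last time $\tau^3(t)$ before $t$ at which $y(a,b)$ and $y(a_0,b_0)$ agree, one of them dominates the other on $[\tau^3(t),t]$, so by the monotonicity of $u\mapsto u^{-\gamma}$ the term $a(1-\beta)\int_{\tau^3(t)}^t e^{bs}\bigl[y_s^{-\gamma}(a,b)-y_s^{-\gamma}(a_0,b_0)\bigr]ds$ has a definite (favorable) sign and can simply be dropped. This yields directly
\begin{displaymath}
\|y(a,b)-y(a_0,b_0)\|_{\infty;T}\leqslant (1-\beta)T\|ae^{b\cdot}-a_0e^{b_0\cdot}\|_{\infty;T}\|y^{-\gamma}(a_0,b_0)\|_{\infty;T}+2T^{\alpha}\|\tilde w^b-\tilde w^{b_0}\|_{\alpha\textrm{-H\"ol};T},
\end{displaymath}
with no Gronwall step and, crucially, no need for a lower bound on $y(a,b)$: only $\|y^{-\gamma}(a_0,b_0)\|_{\infty;T}$ enters, and that is finite by Proposition \ref{MRres} applied to the reference solution alone. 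Your Gronwall-based argument instead needs the Lipschitz constant of $u\mapsto u^{-\gamma}$ on $[\varepsilon,\infty)$ for both trajectories, and hence requires the hitting-time $\tau(a,b)$ together with the bootstrap (shrink $K$, conclude $\tau(a,b)>T$) to keep $y(a,b)$ uniformly bounded away from $0$. Both arguments are sound; the paper's is shorter because the sign structure of the drift does the work that Gronwall plus bootstrap does in yours, while your approach is more routine and would extend to drifts with the same local-Lipschitz behaviour but without the monotonicity that the crossing-time trick exploits.
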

%


%
\begin{proof}
Consider $a^0,a,b^0,b > 0$ and $\tilde w^0,\tilde w : [0,T]\rightarrow\mathbb R$ two functions defined by :
\begin{displaymath}
\forall t\in [0,T]\textrm{, }
\tilde w_{t}^{0} =
\sigma(1-\beta)\int_{0}^{t}
e^{b^0(1-\beta)s}dw_s
\textrm{ and }
\tilde w_t =
\sigma(1-\beta)\int_{0}^{t}
e^{b(1-\beta)s}dw_s.
\end{displaymath}
For every $t\in [0,T]$,
\begin{eqnarray*}
 y_t(a,b) - y_t(a^0,b^0)
 & = &
 a(1-\beta)
 \int_{0}^{t}
 y_{s}^{-\gamma}(a,b)e^{bs}ds -\\
 & &
 a^0(1-\beta)
 \int_{0}^{t}
 y_{s}^{-\gamma}(a^0,b^0)e^{b^0s}ds +
 \tilde w_t -
 \tilde w_{t}^{0}\\
 & = &
 a(1-\beta)
 \int_{0}^{t}\left[y_{s}^{-\gamma}(a,b) - y_{s}^{-\gamma}(a^0,b^0)\right]e^{bs}ds +\\
 & &
 (1-\beta)\int_{0}^{t}
 (ae^{bs} - a^0e^{b^0s})y_{s}^{-\gamma}(a^0,b^0)ds +
 \tilde w_t - \tilde w_{t}^{0}.
\end{eqnarray*}
As at Proposition \ref{MRcont}, by using the monotonicity of $u\in\mathbb R_{+}^{*}\mapsto u^{-\gamma}$ together with appropriate crossing times :
\begin{eqnarray*}
 \|y(a,b) - y(a^0,b^0)\|_{\infty;T}
 & \leqslant &
 (1-\beta)T\|ae^{b.} - a^0e^{b^0.}\|_{\infty;T}
 \|y^{-\gamma}(a^0,b^0)\|_{\infty;T} +\\
 & &
 2T^{\alpha}
 \|\tilde w - \tilde w^0\|_{\alpha\textrm{-H\"ol};T}\\
 & \leqslant &
 (1-\beta)T\left[
 |a - a_0|e^{bT} +
 a_0e^{(b\vee b_0)T}T|b - b_0|\right]\times\\
 & &
 \|y^{-\gamma}(a^0,b^0)\|_{\infty;T} +
 2T^{\alpha}
 \|\tilde w - \tilde w^0\|_{\alpha\textrm{-H\"ol};T}.
\end{eqnarray*}
Moreover, by \cite{FV08}, Theorem 6.8 :
\begin{eqnarray*}
 \|\tilde w - \tilde w^0\|_{\alpha\textrm{-H\"ol};T}
 & \leqslant &
 \sigma(1-\beta)
 \|w\|_{\alpha\textrm{-H\"ol};T}\|e^{b^0(1-\beta).} - e^{b(1-\beta).}\|_{1\textrm{-H\"ol};T}\\
 & \leqslant &
 \sigma(1-\beta)^2
 |b - b^0|\times\\
 & &
 \|w\|_{\alpha\textrm{-H\"ol};T}
 \left[e^{b^0(1-\beta)T} + b(1-\beta)e^{(b\vee b^0)(1-\beta)T}T\right].
\end{eqnarray*}
These inequalities imply that :
\begin{displaymath}
\lim_{(a,b)\rightarrow (a^0,b^0)}
\left\|y(a,b) - y(a^0,b^0)\right\|_{\infty;T} = 0.
\end{displaymath}
Therefore, $(a,b)\mapsto x(a,b) = e^{-b.}y^{\gamma + 1}(a,b)$ is a continuous map from $(\mathbb R_{+}^{*})^2$ into $C^0([0,T];\mathbb R)$.
\end{proof}
\noindent
Let's now show the continuous differentiability of the It\^o map with respect to the initial condition and the driving signal :
%


%
\begin{proposition}\label{ITOdiff}
Under Assumption \ref{Eass}, for $a > 0$ and $b\geqslant 0$, $\tilde\pi_V(0,.)$ is continuously differentiable from $\mathbb R_{+}^{*}\times C^{\alpha\textrm{-H\"ol}}([0,T];\mathbb R)$ into $C^0([0,T];\mathbb R)$.
\end{proposition}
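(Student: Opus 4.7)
The plan is to reduce to a single differentiability statement for the intermediate solution map $I$ introduced in the proof of Proposition \ref{MRcont}, and then to recover $\tilde\pi_V(0,\cdot)$ by composition. Recall that
\begin{displaymath}
\tilde\pi_V(0,x_0;w) =
e^{-b\cdot}\bigl[I(x_{0}^{1-\beta},\mathcal Y(\vartheta,w))\bigr]^{\gamma+1}.
\end{displaymath}
The map $x_0\mapsto x_{0}^{1-\beta}$ is $C^\infty$ on $\mathbb R_+^*$, $w\mapsto\mathcal Y(\vartheta,w)$ is bounded linear (continuous by \cite{FV08}, Proposition 6.11), and the pointwise exponentiation $u\mapsto u^{\gamma+1}$ composed with multiplication by $e^{-b\cdot}$ is smooth on any neighborhood of the range of $I(x_{0}^{1-\beta},\mathcal Y(\vartheta,w))$, which by Proposition \ref{MRres} is a compact subset of $\mathbb R_+^*$. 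So it suffices to prove that $I:\mathbb R_+^*\times C^{\alpha\textrm{-H\"ol}}([0,T];\mathbb R)\to C^0([0,T];\mathbb R)$ is continuously Fr\'echet differentiable, and then invoke the chain rule.

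For the differentiability of $I$, I would fix $(y_0,\tilde w)$ and set $y=I(y_0,\tilde w)$. By Proposition \ref{MRres} there exist $0<m\leqslant M<\infty$ such that $y_s\in[m,M]$ for every $s\in[0,T]$, so $u\mapsto u^{-\gamma}$ is smooth with bounded derivatives on $[m/2,2M]$. The candidate Fr\'echet derivative $DI(y_0,\tilde w)\cdot(\delta y_0,\delta\tilde w)=\eta$ should then satisfy the linear Volterra equation
\begin{displaymath}
\eta_t =
\delta y_0 -
a(1-\beta)\gamma
\int_{0}^{t}
y_{s}^{-\gamma-1}e^{bs}\eta_s\,ds +
\delta\tilde w_t,
\qquad t\in[0,T].
\end{displaymath}
The kernel $K(s):=a(1-\beta)\gamma\,y_s^{-\gamma-1}e^{bs}$ is continuous and bounded on $[0,T]$, so this equation has a unique solution $\eta\in C^0([0,T];\mathbb R)$, with $\|\eta\|_{\infty;T}\leqslant C(|\delta y_0|+\|\delta\tilde w\|_{\alpha\textrm{-H\"ol};T})$ by Gr\"onwall's inequality. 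This gives a bounded linear map $(\delta y_0,\delta\tilde w)\mapsto\eta$.

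To verify that it actually is the derivative, I would consider the remainder $r^\varepsilon=I(y_0+\varepsilon\delta y_0,\tilde w+\varepsilon\delta\tilde w)-y-\varepsilon\eta$. Subtracting the two implicit equations and using a first order Taylor expansion of $u\mapsto u^{-\gamma}$ around $y_s$, which is justified on $[m/2,2M]$ once $\varepsilon$ is small enough (this uses the continuity of $I$ proved in Proposition \ref{MRcont}), one obtains
\begin{displaymath}
r_t^\varepsilon =
-a(1-\beta)\gamma
\int_0^t y_s^{-\gamma-1}e^{bs}r_s^\varepsilon\,ds +
R^\varepsilon(t),
\end{displaymath}
where $R^\varepsilon=o(\varepsilon)$ uniformly in $t$ by the quadratic Taylor remainder bound. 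Another Gr\"onwall estimate then yields $\|r^\varepsilon\|_{\infty;T}=o(\varepsilon)$, proving Fr\'echet differentiability.

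Finally, continuity of the differential $(y_0,\tilde w)\mapsto DI(y_0,\tilde w)$ follows because the kernel $K$ above depends continuously on $(y_0,\tilde w)$ via the continuity of $I$ (Proposition \ref{MRcont}) and the smoothness of $u\mapsto u^{-\gamma-1}$ on $[m/2,2M]$; a further Gr\"onwall argument applied to the Volterra equation then transfers this continuity to the solution map $(\delta y_0,\delta\tilde w)\mapsto\eta$ in operator norm. The main technical obstacle is the second step, namely obtaining the uniform $o(\varepsilon)$ bound on the Taylor remainder of $u\mapsto u^{-\gamma}$ along the perturbed trajectories; this is where one really uses the uniform bound $y_s\geqslant m>0$ from Proposition \ref{MRres} combined with the continuous dependence from Proposition \ref{MRcont} to trap the perturbed solutions in a compact subset of $\mathbb R_+^*$.
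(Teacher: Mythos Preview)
Your proof is correct, but it proceeds quite differently from the paper's. The paper localises the original RDE solution $x=\tilde\pi_V(0,x_0;w)$ to a compact interval $[m_0,M_0]\subset\mathbb R_{+}^{*}$ using Proposition~\ref{MRcont}, observes that the vector field $V$ is $C^\infty$ there, and then invokes the black-box differentiability theorems for rough differential equations (\cite{FV08}, Theorems~11.3 and~11.6) to conclude. In dimension~$1$ the usual constraint $\alpha+\kappa>1$ on the direction of differentiation is dropped via~(\ref{GRP1dim}).

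You instead exploit the specific factorisation through the transformed equation~(\ref{EDS4}), where the driving signal enters additively. This reduces the problem to differentiating the map $I$, which is governed by an ordinary Volterra equation with a smooth nonlinearity on the relevant range; the derivative is then obtained by linearisation and Gr\"onwall, with no rough-path machinery needed. Your route is more elementary and self-contained, and it yields directly the explicit variational equation for $\eta$ (which the paper only writes down later, in the proof of Proposition~\ref{BHmre}). The paper's route, by contrast, is shorter once the Friz--Victoir theory is taken as given and would transfer more readily to vector fields that do not admit such a convenient change of variables. Both arguments rest on the same essential ingredient you correctly isolate: the strict positivity of the solution from Proposition~\ref{MRres} together with the continuity from Proposition~\ref{MRcont}, which confines the perturbed trajectories to a domain where $u\mapsto u^{-\gamma}$ is smooth.
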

%


%
\begin{proof}
In a sake of readability, the space $\mathbb R_{+}^{*}\times C^{\alpha\textrm{-H\"ol}}([0,T];\mathbb R)$ is denoted by $E$.
\\
\\
Consider $(x_{0}^{0},w^0)\in E$, $x^0 := \tilde\pi_V(0,x_{0}^{0};w^0)$,
\begin{displaymath}
m_0\in
\left]0,\min_{t\in [0,T]}x_{t}^{0}\right[
\textrm{ and }
\varepsilon_0 :=
-m_0 + \min_{t\in [0,T]}x_{t}^{0}.
\end{displaymath}
Since $\tilde\pi_V(0,.)$ is continuous from $E$ into $C^0([0,T];\mathbb R)$ by Proposition \ref{MRcont} :
\begin{eqnarray}
 \forall\varepsilon\in ]0,\varepsilon_0]
 \textrm{, }
 \exists\eta > 0 & : &
 \forall (x_0,w)\in E,
 \nonumber\\
 \label{continuity_z}
 (x_0,w) & \in & B_E((x_{0}^{0},w^0);\eta)
 \Longrightarrow
 \|\tilde\pi_V(0,x_0;w) - x^0\|_{\infty;T} < \varepsilon\leqslant\varepsilon_0.
\end{eqnarray}
In particular, for every $(x_0,w)\in B_E((x_{0}^{0},w^0);\eta)$, the function $\tilde\pi_V(0,x_0;w)$ is $[m_0,M_0]$-valued with $[m_0,M_0]\subset\mathbb R_{+}^{*}$ and
\begin{displaymath}
M_0 :=
-m_0 +
\min_{t\in [0,T]}
x_{t}^{0} +
\max_{t\in [0,T]}
x_{t}^{0}.
\end{displaymath}
In \cite{FV08}, the continuous differentiability of the It\^o map with respect to the initial condition and the driving signal is established at theorems 11.3 and 11.6. In order to derive the It\^o map with respect to the driving signal at point $w^0$ in the direction $h\in C^{\kappa\textrm{-H\"ol}}([0,T];\mathbb R^d)$, $\kappa\in ]0,1[$ has to satisfy the condition $\alpha +\kappa > 1$ to ensure the existence of the geometric $1/\alpha$-rough path over $w^0 + \varepsilon h$ ($\varepsilon > 0$) provided at \cite{FV08}, Theorem 9.34 when $d > 1$. When $d = 1$, that condition can be dropped by (\ref{GRP1dim}). Therefore, since the vector field $V$ is $C^{\infty}$ on $[m_0,M_0]$, $\tilde\pi_V(0,.)$ is continuously differentiable from $B_E((x_{0}^{0},w^0);\eta)$ into $C^0([0,T];\mathbb R)$.
\\
\\
In conclusion, since $(x_{0}^{0},w^0)$ has been arbitrarily chosen, $\tilde\pi_V(0,.)$ is continuously differentiable from $\mathbb R_{+}^{*}\times C^{\alpha\textrm{-H\"ol}}([0,T];\mathbb R)$ into $C^0([0,T];\mathbb R)$.
\end{proof}
%


%
\subsection{A converging approximation}
In order to provide a converging approximation for equation (\ref{DETmre}), we first prove the convergence of the implicit Euler approximation $(y^n,n\in\mathbb N^*)$ for equation (\ref{EDS4}) :
\begin{equation}\label{EULdef}
\left\{
\begin{array}{rcl}
y_{0}^{n} & = & y_0 > 0\\
y_{k + 1}^{n} & = &
\displaystyle{y_{k}^{n} +\frac{a(1-\beta)T}{n}(y_{k + 1}^{n})^{-\gamma}e^{bt_{k + 1}^{n}} +
\tilde w_{t_{k + 1}^{n}} - \tilde w_{t_{k}^{n}}}
\end{array}
\right.
\end{equation}
where, for $n\in\mathbb N^*$, $t_{k}^{n} = kT/n$ and $k\leqslant n$ while $y_{k + 1}^{n} > 0$.
\\
\\
\textbf{Remark.} On the implicit Euler approximation in stochastic analysis, cf. F. Malrieu \cite{FM00} and, F. Malrieu and D. Talay \cite{MT06} for example.
\\
\\
The following proposition shows that the implicit step-$n$ Euler approximation $y^n$ is defined on $\{1,\dots,n\}$ :
%


%
\begin{proposition}\label{EULres}
Under Assumption \ref{HYPw}, for $a > 0$ and $b\geqslant 0$, equation (\ref{EULdef}) admits a unique solution $(y^n,n\in\mathbb N^*)$. Moreover,
\begin{displaymath}
\forall n\in\mathbb N^*\textrm{, }
\forall k = 0,\dots, n\textrm{, }
y_{k}^{n} > 0.
\end{displaymath}
\end{proposition}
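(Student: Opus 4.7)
The plan is an induction on $k \in \{0,\ldots,n-1\}$ that both constructs $y_{k+1}^n$ and verifies $y_{k+1}^n > 0$. The base case $y_0^n = y_0 > 0$ is given, and under Assumption \ref{HYPw} the Young integral $\tilde w$ is well defined and continuous on $[0,T]$, so the increments $\tilde w_{t_{k+1}^n} - \tilde w_{t_k^n}$ appearing in (\ref{EULdef}) are finite real numbers.

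For the inductive step, fix $k$ and assume $y_k^n > 0$. Introducing the unknown $u = y_{k+1}^n$, the relation (\ref{EULdef}) rewrites as $f_k(u) = R_k$, where
$$f_k(u) := u - c_k u^{-\gamma}, \quad c_k := \frac{a(1-\beta)T}{n}\,e^{b t_{k+1}^n}, \quad R_k := y_k^n + \tilde w_{t_{k+1}^n} - \tilde w_{t_k^n}.$$
Since $a > 0$, $b \geqslant 0$ and $\beta \in ]1-\alpha,1[$ (so that $\gamma = \beta/(1-\beta) > 0$ is well defined), one has $c_k > 0$. The map $f_k$ is $C^\infty$ on $]0,\infty[$ with
$$f_k'(u) = 1 + c_k \gamma u^{-\gamma-1} > 0,$$
so $f_k$ is strictly increasing on $]0,\infty[$; moreover
$$\lim_{u \to 0^+} f_k(u) = -\infty \quad\text{and}\quad \lim_{u\to\infty} f_k(u) = +\infty.$$
Hence $f_k$ is a continuous strictly increasing bijection from $]0,\infty[$ onto $\mathbb R$, and for every $R_k \in \mathbb R$ there is a unique $u > 0$ with $f_k(u) = R_k$. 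Setting $y_{k+1}^n := u$ gives the unique positive solution of the implicit equation, completing the induction.

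The only point worth flagging is that the positivity of $y_{k+1}^n$ is not in jeopardy even when $R_k < 0$ (which can occur if the Gaussian-driven increment $\tilde w_{t_{k+1}^n} - \tilde w_{t_k^n}$ is sufficiently negative to outweigh $y_k^n$): because $f_k$ is surjective onto all of $\mathbb R$, negativity of the right-hand side is absorbed by the singular drift term $-c_k u^{-\gamma}$ near $u = 0$. This is precisely the implicit nature of the scheme that prevents the numerical trajectory from leaving the positive half-line, in contrast to what an explicit Euler scheme would do, and is really the only substantive point in the argument; everything else is a direct application of the intermediate value theorem to the explicit one-variable function $f_k$.
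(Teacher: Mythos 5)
Your proof is correct and follows essentially the same route as the paper: both arguments reduce the implicit step to solving $u - c u^{-\gamma} = R$ for $u > 0$, observe that the left-hand side is a continuous, strictly increasing bijection from $]0,\infty[$ onto $\mathbb R$ (via the sign of the derivative and the limits at $0^+$ and $+\infty$), and conclude by induction on $k$. The only cosmetic difference is that the paper phrases this with a three-variable function $f(x,A,B)=x-Bx^{-\gamma}-A$ and you specialize it step by step as $f_k$; the mathematical content is identical.
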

%


%
\begin{proof}
Let $f$ be the function defined on $\mathbb R_{+}^{*}\times\mathbb R\times\mathbb R_{+}^{*}$ by :
\begin{displaymath}
\forall A\in\mathbb R\textrm{, }
\forall x,B > 0\textrm{, }
f\left(x,A,B\right) =
x - Bx^{-\gamma} - A.
\end{displaymath}
On one hand, for every $A\in\mathbb R$ and $B > 0$, $f(.,A,B)\in C^{\infty}(\mathbb R_{+}^{*};\mathbb R)$ and for every $x > 0$,
\begin{displaymath}
\partial_x f\left(x,A,B\right) = 1 + B\gamma x^{-(\gamma + 1)} > 0.
\end{displaymath}
Then, $f(.,A,B)$ increase on $\mathbb R_{+}^{*}$. Moreover,
\begin{displaymath}
\lim_{x\rightarrow 0^+}
f\left(x,A,B\right) = -\infty
\textrm{ and }
\lim_{x\rightarrow\infty}
f\left(x,A,B\right) = \infty.
\end{displaymath}
Therefore, since $f$ is continuous on $\mathbb R_{+}^{*}\times\mathbb R\times\mathbb R_{+}^{*}$ :
\begin{equation}\label{EULres1}
\forall A\in\mathbb R\textrm{, }
\forall B > 0\textrm{, }
\exists ! x > 0 :
f\left(x,A,B\right) = 0.
\end{equation}
On the other hand, for every $n\in\mathbb N^*$, equation (\ref{EULdef}) can be rewritten as follow :
\begin{equation}\label{EULres2}
f\left[y_{k + 1}^{n},
y_{k}^{n} + \tilde w_{t_{k + 1}^{n}} -
\tilde w_{t_{k}^{n}},
\frac{a(1-\beta)T}{n}e^{bt_{k + 1}^{n}}\right] = 0.
\end{equation}
In conclusion, by recurrence, equation (\ref{EULres2}) admits a unique strictly positive solution $y_{k + 1}^{n}$.
\\
\\
Necessarily, $y_{k}^{n} > 0$ for $k = 0,\dots, n$.
\\
\\
That achieves the proof.
\end{proof}
\noindent
For every $n\in\mathbb N^*$, consider the function $y^n : [0,T]\rightarrow\mathbb R_{+}^{*}$ such that :
\begin{displaymath}
y_{t}^{n} =
\sum_{k = 0}^{n - 1}
\left[
y_{k}^{n} +
\frac{y_{k + 1}^{n} - y_{k}^{n}}{t_{k + 1}^{n} - t_{k}^{n}}
(t - t_{k}^{n})\right]
\mathbf 1_{[t_{k}^{n},t_{k + 1}^{n}[}(t)
\end{displaymath}
for every $t\in [0,T]$.
\\
\\
The following lemma provides an explicit upper-bound for $(n,t)\in\mathbb N^*\times [0,T]\mapsto y_{t}^{n}$. It is crucial in order to prove probabilistic convergence results at Section 4.
%


%
\begin{lemma}\label{EULcv}
Under Assumption \ref{HYPw}, for $a > 0$ and $b\geqslant 0$ :
\begin{eqnarray*}
 \sup_{n\in\mathbb N^*}
 \left\|y^n\right\|_{\infty;T}
 & \leqslant &
 y_0 + a(1-\beta)e^{bT}y_{0}^{-\gamma}T +\\
 & &
 \sigma(b\vee 2)(1-\beta)(1 + T)e^{b(1-\beta)T}\|w\|_{\infty;T}.
\end{eqnarray*}
\end{lemma}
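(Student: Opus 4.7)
The plan is to transpose the argument used for Proposition \ref{MRint} to the implicit Euler scheme, exploiting the explicit discrete relation (\ref{EULdef}) in place of the continuous integral equation (\ref{EDS4}). Fix $n\in\mathbb N^*$ and introduce the discrete analogue of the crossing time $\tau_{y_0}^{2}$, namely
\begin{displaymath}
k_{0}^{n}:=\max\left\{k\in\{0,\dots,n\}\,:\,y_{k}^{n}\leqslant y_{0}\right\},
\end{displaymath}
which is well-defined because $y_{0}^{n}=y_{0}$. By Proposition \ref{EULres}, every $y_{j}^{n}$ is strictly positive, so the iteration (\ref{EULdef}) may be telescoped freely between any two indices.

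First I would handle the case $k\leqslant k_{0}^{n}$. Telescoping (\ref{EULdef}) between $k$ and $k_{0}^{n}$ yields
\begin{displaymath}
y_{k_{0}^{n}}^{n}-y_{k}^{n}
=\frac{a(1-\beta)T}{n}\sum_{j=k}^{k_{0}^{n}-1}(y_{j+1}^{n})^{-\gamma}e^{bt_{j+1}^{n}}+\tilde w_{t_{k_{0}^{n}}^{n}}-\tilde w_{t_{k}^{n}},
\end{displaymath}
so using $y_{k_{0}^{n}}^{n}\leqslant y_{0}$ and the positivity of each $(y_{j+1}^{n})^{-\gamma}e^{bt_{j+1}^{n}}$, one gets $0<y_{k}^{n}\leqslant y_{0}+|\tilde w_{t_{k}^{n}}-\tilde w_{t_{k_{0}^{n}}^{n}}|$, exactly as in Case~1 of Proposition \ref{MRint}. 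For $k>k_{0}^{n}$, the definition of $k_{0}^{n}$ forces $y_{j}^{n}>y_{0}$ for every $j\in\{k_{0}^{n}+1,\dots,n\}$, hence $(y_{j}^{n})^{-\gamma}\leqslant y_{0}^{-\gamma}$. Telescoping between $k_{0}^{n}$ and $k$ then gives
\begin{displaymath}
y_{k}^{n}\leqslant y_{k_{0}^{n}}^{n}+a(1-\beta)e^{bT}y_{0}^{-\gamma}T+|\tilde w_{t_{k}^{n}}-\tilde w_{t_{k_{0}^{n}}^{n}}|\leqslant y_{0}+a(1-\beta)e^{bT}y_{0}^{-\gamma}T+|\tilde w_{t_{k}^{n}}-\tilde w_{t_{k_{0}^{n}}^{n}}|,
\end{displaymath}
which is the discrete version of Case~2.

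Next I would bound $|\tilde w_{t_{k}^{n}}-\tilde w_{t_{k_{0}^{n}}^{n}}|$ independently of $n$ and $k$ by reproducing verbatim the integration by parts estimate of Proposition \ref{MRint}: writing $\tilde w_{t}-\tilde w_{s}=\sigma(1-\beta)[e^{b(1-\beta)t}w_{t}-e^{b(1-\beta)s}w_{s}-b(1-\beta)\int_{s}^{t}e^{b(1-\beta)u}w_{u}\,du]$ and using $(1-\beta)^{2}\leqslant 1-\beta\leqslant 1$ delivers the uniform upper-bound $\sigma(b\vee 2)(1-\beta)(1+T)e^{b(1-\beta)T}\|w\|_{\infty;T}$. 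Combining the two cases yields the claimed inequality for every value $y_{k}^{n}$, $k\in\{0,\dots,n\}$; finally, because the piecewise linear interpolant $y_{t}^{n}$ is a convex combination of $y_{k}^{n}$ and $y_{k+1}^{n}$ on each subinterval $[t_{k}^{n},t_{k+1}^{n}[$, the bound transfers immediately to $\|y^{n}\|_{\infty;T}$, and then to the supremum over $n$.

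The only genuine subtlety is verifying that $k_{0}^{n}$ really plays the role of $\tau_{y_{0}}^{2}$: one must check that the telescoping relations remain valid in the degenerate situations $k_{0}^{n}=n$ (in which Case~2 is empty) and $k_{0}^{n}=0$ (in which Case~1 reduces to $y_{0}^{n}=y_{0}$), but both are harmless. Apart from this bookkeeping, the argument is routine, since the implicit scheme preserves the two structural features used in the continuous proof — strict positivity and the monotonicity of $u\mapsto u^{-\gamma}$.
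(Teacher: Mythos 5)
Your proposal is correct and follows exactly the same route as the paper: define the discrete crossing index (the paper calls it $k_{y_0}$, you call it $k_0^n$), telescope the recursion (\ref{EULdef}) to obtain the discrete analogue (\ref{EULRdef}), split into the two cases $k\leqslant k_0^n$ and $k>k_0^n$ using positivity and monotonicity of $u\mapsto u^{-\gamma}$, bound the increment of $\tilde w$ by the same Young integration-by-parts estimate from Proposition~\ref{MRint}, and pass from $\max_k y_k^n$ to $\|y^n\|_{\infty;T}$ via the piecewise-linear interpolation. No meaningful deviation.
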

%


%
\begin{proof}
Similar to the proof of Proposition \ref{MRint}.
\\
\\
First of all, by applying (\ref{EULdef}) recursively between integers $0\leqslant l < k\leqslant n$ and a change of variable :
\begin{equation}\label{EULRdef}
y_{k}^{n} - y_{l}^{n}=
\frac{a(1-\beta)T}{n}
\sum_{i = l + 1}^{k}
\left(y_{i}^{n}\right)^{-\gamma}e^{bt_{i}^{n}} +
\tilde w_{t_{k}^{n}} -
\tilde w_{t_{l}^{n}}.
\end{equation}
Consider $n\in\mathbb N^*$ and
\begin{displaymath}
k_{y_0} = \max\left\{k = 0,\dots,n : y_{k}^{n}\leqslant y_0\right\}.
\end{displaymath}
For each $k = 1,\dots,n$, we consider the two following cases :
\begin{enumerate}
 \item If $k < k_{y_0}$, from equality (\ref{EULRdef}) :
 \begin{displaymath}
 y_{k_{y_0}}^{n} - y_{k}^{n} =
 \frac{a(1-\beta)T}{n}
 \sum_{i = k + 1}^{k_{y_0}}
 \left(y_{i}^{n}\right)^{-\gamma}e^{bt_{i}^{n}} +
 \tilde w_{t_{k_{y_0}}^{n}} -
 \tilde w_{t_{k}^{n}}.
 \end{displaymath}
 Then,
 \begin{equation}\label{EQEULcv}
 y_{k}^{n} +
 \frac{a(1-\beta)T}{n}
 \sum_{i = k + 1}^{k_{y_0}}
 \left(y_{i}^{n}\right)^{-\gamma}e^{bt_{i}^{n}} =
 y_{k_{y_0}}^{n} +
 \tilde w_{t_{k}^{n}} -
 \tilde w_{t_{k_{y_0}}^{n}}.
 \end{equation}
 Therefore, since each term of the sum in the left-hand side of equality (\ref{EQEULcv}) are positive from Proposition \ref{EULres} :
 \begin{eqnarray*}
  0 < y_{k}^{n} & \leqslant &
  y_{k}^{n} +
  \frac{a(1-\beta)T}{n}
  \sum_{i = k + 1}^{k_{y_0}}
  \left(y_{i}^{n}\right)^{-\gamma}e^{bt_{i}^{n}}\\
  & \leqslant &
  y_0 + |\tilde w_{t_{k}^{n}} -
  \tilde w_{t_{k_{y_0}}^{n}}|
 \end{eqnarray*}
 because $y_{k_{y_0}}^{n}\leqslant y_0$.
 \item If $k > k_{y_0}$ ; by definition of $k_{y_0}$, for $i = k_{y_0} + 1,\dots, k$, $y_{i}^{n} > y_0$ and then, $(y_{i}^{n})^{-\gamma}\leqslant y_{0}^{-\gamma}$. Therefore, from equality (\ref{EULRdef}) :
 \begin{eqnarray*}
  y_0\leqslant y_{k}^{n} & = &
  y_{k_{y_0}}^{n} +
  \frac{a(1-\beta)T}{n}
  \sum_{i = k_{y_0} + 1}^{k}
  \left(y_{i}^{n}\right)^{-\gamma}e^{bt_{i}^{n}} +
  \tilde w_{t_{k}^{n}} -
  \tilde w_{t_{k_{y_0}}^{n}}\\
  & \leqslant &
  y_0 + a(1-\beta)e^{bT}y_{0}^{-\gamma}T +
  |\tilde w_{t_{k}^{n}} -
  \tilde w_{t_{k_{y_0}}^{n}}|.
 \end{eqnarray*}
\end{enumerate}
As at Proposition \ref{MRint} :
\begin{eqnarray}
 \sup_{t\in [0,T]}y_{t}^{n}
 & \leqslant &
 \max_{k = 0,\dots,n} y_{k}^{n}
 \nonumber\\
 & \leqslant &
 \label{MAJEULcv}
 y_0 + a(1-\beta)e^{bT}y_{0}^{-\gamma}T +
 \sigma(b\vee 2)(1-\beta)(1 + T)e^{b(1-\beta)T}\|w\|_{\infty;T}.
\end{eqnarray}
That achieves the proof because the right hand side of inequality (\ref{MAJEULcv}) is not depending on $n$.
\end{proof}
\noindent
With ideas of A. Lejay \cite{LEJ10}, Proposition 5, we show that $(y^n,n\in\mathbb N^*)$ converges and provide a rate of convergence :
%


%
\begin{theorem}\label{EULlim}
Under assumptions \ref{Eass} and \ref{HYPw}, for $a > 0$ and $b\geqslant 0$ ; $(y^n,n\in\mathbb N^*)$ is uniformly converging on $[0,T]$ to $y$, the solution of equation (\ref{EDS4}) with initial condition $y_0$, with rate $n^{-\alpha\min(1,\gamma)}$.
\end{theorem}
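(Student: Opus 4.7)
The plan is to compare the discrete scheme with the continuous equation node by node, derive a discrete Gr\"onwall-type recursion for $e_k^n := y_{t_k^n} - y_k^n$, and then extend the bound to the piecewise-linear interpolant. Throughout I would use the uniform-in-$n$ upper bounds of Proposition \ref{MRint} and Lemma \ref{EULcv}, the $\alpha$-H\"older regularity of $\tilde w$ inherited from $w$, the positivity of $y$ given by Proposition \ref{MRres}, and the nodal positivity of $y_k^n$ given by Proposition \ref{EULres}.

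First, I would upgrade the nodal positivity to a uniform-in-$n$ lower bound $y_k^n \geq m' > 0$. Proposition \ref{MRres} gives $y_t \geq m > 0$ on $[0,T]$; for $y_k^n$ I would argue by contradiction using the implicit nature of (\ref{EULdef}): if $y_{k+1}^n$ were very close to zero, then the positive implicit term $(a(1-\beta)T/n)(y_{k+1}^n)^{-\gamma}e^{bt_{k+1}^n}$ would blow up, forcing $y_k^n+\tilde w_{t_{k+1}^n}-\tilde w_{t_k^n}$ to be comparably large and negative, which is incompatible with Lemma \ref{EULcv} combined with the $\alpha$-H\"older control of $\tilde w$-increments. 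This mirrors the continuous argument that $\tau_0^1 > T$ in Proposition \ref{MRres}.

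Next, subtracting (\ref{EULdef}) from the identity obtained by integrating (\ref{EDS4}) between $t_k^n$ and $t_{k+1}^n$ cancels the noise increments and leaves
\begin{displaymath}
e_{k+1}^n - e_k^n = a(1-\beta)\int_{t_k^n}^{t_{k+1}^n}\bigl[y_s^{-\gamma}e^{bs} - (y_{k+1}^n)^{-\gamma}e^{bt_{k+1}^n}\bigr]\,ds.
\end{displaymath}
Decompose the bracket as $[y_s^{-\gamma}-y_{t_{k+1}^n}^{-\gamma}]e^{bs}$, plus $y_{t_{k+1}^n}^{-\gamma}[e^{bs}-e^{bt_{k+1}^n}]$, plus $[y_{t_{k+1}^n}^{-\gamma}-(y_{k+1}^n)^{-\gamma}]e^{bt_{k+1}^n}$. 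On $[m',M]$ the inequality $|u^{-\gamma}-v^{-\gamma}| \leq C_{m'}|u-v|^{\min(1,\gamma)}$, combined with $|y_s-y_{t_{k+1}^n}|\leq\|y\|_{\alpha\textrm{-H\"ol};T}(T/n)^{\alpha}$ (the H\"older norm being finite thanks to (\ref{EDS4}) and the boundedness of $y^{-\gamma}$ on $[m,M]$), makes the first piece $O(n^{-1-\alpha\min(1,\gamma)})$ after integration. The second piece is $O(n^{-2})$ by smoothness of $s\mapsto e^{bs}$. The third piece is the propagating term, bounded by $C(T/n)|e_{k+1}^n|^{\min(1,\gamma)}$.

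Summing over $k$, absorbing the implicit contribution of $e_{k+1}^n$ for $n$ large, and applying a discrete Gr\"onwall inequality (when $\gamma\geq 1$, the propagating term is linear) or a Bihari-type discrete comparison (when $\gamma<1$) yields $\max_{k}|e_k^n| \leq C n^{-\alpha\min(1,\gamma)}$. Finally, on each subinterval $[t_k^n,t_{k+1}^n]$ the piecewise-linear $y^n$ and the $\alpha$-H\"older $y$ oscillate by $O(n^{-\alpha})$, so the supremum error between nodes is dominated by the nodal estimate, giving $\|y^n-y\|_{\infty;T}\leq C n^{-\alpha\min(1,\gamma)}$. The main obstacle is the $\gamma<1$ regime: preserving the sharp rate forces the H\"older-in-itself nonlinear estimate on $u\mapsto u^{-\gamma}$ and a Bihari-type iteration in place of a plain discrete Gr\"onwall, and the lower bound $m'$ from the first step must be quantitative and independent of $n$ so as to keep the associated H\"older constants under control.
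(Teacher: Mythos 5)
Your proposal diverges from the paper at the key step and, as written, has a circular gap. The paper's proof does \emph{not} establish a uniform-in-$n$ lower bound on $y_k^n$ before the error estimate, and does \emph{not} use a Gr\"onwall or Bihari iteration. Instead it exploits the monotonicity of $u\mapsto u^{-\gamma}$: writing $z_i^n = y_{t_i^n}$ and noting that $y$ satisfies the implicit scheme up to a local truncation term $\varepsilon_k^n$, the one-step identity for $y_{k+1}^n - z_{k+1}^n$ is split into the two cases $y_{k+1}^n \gtrless z_{k+1}^n$; in either case the implicit term $\frac{a(1-\beta)T}{n}e^{bt_{k+1}^n}\bigl[(y_{k+1}^n)^{-\gamma}-(z_{k+1}^n)^{-\gamma}\bigr]$ has the favourable sign and can simply be dropped, giving $|y_{k+1}^n - z_{k+1}^n|\leqslant |y_k^n - z_k^n| + |\varepsilon_k^n|$ outright. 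Summing over $k$ gives the nodal bound with no iteration lemma and no lower bound on $y_k^n$ whatsoever; a lower bound $y_{k+1}^n\geqslant \tfrac12\min_t y_t$ is then deduced \emph{a posteriori}, for $n$ large, and is only used to control the interpolation term $|y_t^n-y_k^n|$.

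This is precisely what your step~1 cannot deliver. The one-step contradiction you sketch only gives, after isolating the implicit term in (\ref{EULdef}) and using Lemma \ref{EULcv} together with the H\"older bound on the $\tilde w$-increment,
\begin{displaymath}
\frac{a(1-\beta)T}{n}\,(y_{k+1}^n)^{-\gamma}e^{bt_{k+1}^n}
\leqslant y_{k+1}^n + \|\tilde w\|_{\alpha\textrm{-H\"ol};T}\Bigl(\frac{T}{n}\Bigr)^{\alpha},
\end{displaymath}
hence $y_{k+1}^n \geqslant c\, n^{-1/\gamma}$ for some constant $c>0$ --- a bound that vanishes as $n\to\infty$. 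A genuine uniform lower bound can only be extracted after the nodal error estimate, but you need such a bound to make the H\"older control of $u\mapsto u^{-\gamma}$ (and hence the propagating term in your Gr\"onwall recursion) quantitative, so the plan is circular as stated.

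Two secondary remarks. First, even granting $m'$, invoking a Bihari-type comparison for $\gamma<1$ is unnecessary: on $[m',M]$ the map $u\mapsto u^{-\gamma}$ is $C^1$ with bounded derivative, hence Lipschitz, so your propagating term is linear in $|e_{k+1}^n|$ and ordinary discrete Gr\"onwall would suffice. Second, the exponent $\min(1,\gamma)$ in the announced rate enters through the local truncation error $\varepsilon_k^n$: the paper controls the $\alpha\min(1,\gamma)$-H\"older norm of $s\mapsto e^{bs}y_s^{-\gamma}$ by rewriting $y^{-\gamma}$ as $1/y^{\gamma}$ and using that the \emph{positive} power $s\mapsto s^{\gamma}$ is $\gamma$-H\"older when $\gamma\in(0,1]$; it does not arise from the propagating term as your decomposition suggests.
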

%


%
\begin{proof}
It follows the same pattern that Proof of \cite{LEJ10}, Proposition 5.
\\
\\
Consider $n\in\mathbb N^*$, $t\in [0,T]$ and $y$ the solution of equation (\ref{EDS4}) with initial condition $y_0 > 0$. Since $(t_{k}^{n};k = 0,\dots,n)$ is a subdivision of $[0,T]$, there exists an integer $0\leqslant k\leqslant n - 1$ such that $t\in [t_{k}^{n},t_{k + 1}^{n}[$.
\\
\\
First of all, note that :
\begin{equation}\label{EULmaj1}
|y_{t}^{n} - y_t|\leqslant
|y_{t}^{n} - y_{k}^{n}| +
|y_{k}^{n} - z_{k}^{n}| + 
|z_{k}^{n} - y_t|
\end{equation}
where, $z_{i}^{n} = y_{t_{i}^{n}}$ for $i = 0,\dots,n$. Since $y$ is the solution of equation (\ref{EDS4}), $z_{k}^{n}$ and $z_{k + 1}^{n}$ satisfy :
\begin{displaymath}
z_{k + 1}^{n} =
z_{k}^{n} +
\frac{a(1-\beta)T}{n}(z_{k + 1}^{n})^{-\gamma}e^{bt_{k + 1}^{n}} +
\tilde w_{t_{k + 1}^{n}} -
\tilde w_{t_{k}^{n}} +
\varepsilon_{k}^{n}
\end{displaymath}
where,
\begin{displaymath}
\varepsilon_{k}^{n} =
a(1-\beta)\int_{t_{k}^{n}}^{t_{k + 1}^{n}}
(y_{s}^{-\gamma}e^{bs} -
y_{t_{k + 1}^{n}}^{-\gamma}e^{bt_{k + 1}^{n}})ds.
\end{displaymath}
In order to conclude, we have to show that $|y_{k}^{n} - z_{k}^{n}|$ is bounded by a quantity not depending on $k$ and converging to $0$ when $n$ goes to infinity :
\\
\\
On one hand, for every $(u,v)\in\Delta_T$,
\begin{eqnarray*}
 \left|e^{bv}y_{v}^{-\gamma} - e^{bu}y_{u}^{-\gamma}\right|
 & = &
 \left|\frac{e^{bv}y_{u}^{\gamma} - e^{bu}y_{v}^{\gamma}}{y_{v}^{\gamma}y_{u}^{\gamma}}\right|\\
 & \leqslant &
 \frac{1}{|y_uy_v|^{\gamma}}
 \left(e^{bv}|y_{u}^{\gamma} - y_{v}^{\gamma}| +
 |y_v|^{\gamma}|e^{bu} - e^{bv}|\right)\\
 & \leqslant &
 e^{bT}\|y^{-\gamma}\|_{\infty;T}^{2}
 \left(\|y\|_{\alpha\textrm{-H\"ol};T}^{\min(1,\gamma)}|v - u|^{\alpha\min(1,\gamma)} +
 b\|y\|_{\infty;T}^{\gamma}|v - u|\right)
\end{eqnarray*}
because $s\in\mathbb R_+\mapsto s^{\gamma}$ is $\gamma$-H\"older continuous with constant $1$ if $\gamma\in ]0,1]$ and locally Lipschitz continuous otherwise, $y$ is $\alpha$-H\"older continuous and admits a strictly positive minimum on $[0,T]$, and $s\in [0,T]\mapsto e^{bs}$ is Lipschitz continuous with constant $be^{bT}$. In particular, if $|v - u|\leqslant 1$,
\begin{displaymath}
|e^{bv}y_{v}^{-\gamma} - e^{bu}y_{u}^{-\gamma}|
\leqslant
e^{bT}\|y^{-\gamma}\|_{\infty;T}^{2}
\left(\|y\|_{\alpha\textrm{-H\"ol};T}^{\mu} +
b\|y\|_{\infty;T}^{\gamma}\right)
|v - u|^{\alpha\mu}
\end{displaymath}
where $\mu = \min(1,\gamma)$.
\\
\\
Then, for $i = 0,\dots,k$,
\begin{eqnarray}
 |\varepsilon_{i}^{n}|
 & \leqslant &
 a(1 -\beta)
 \int_{t_{i}^{n}}^{t_{i + 1}^{n}}
 |y_{s}^{-\gamma}e^{bs} -
 y_{t_{i + 1}^{n}}^{-\gamma}e^{bt_{i + 1}^{n}}|ds
 \nonumber\\
 & \leqslant &
 a(1 -\beta)\left\|e^{b.}y^{-\gamma}\right\|_{\alpha\mu\textrm{-H\"ol};T}
 \int_{t_{i}^{n}}^{t_{i + 1}^{n}}
 (t_{i + 1}^{n} - s)^{\alpha\mu}ds
 \nonumber\\
 & \leqslant &
 \label{EULmaj2}
 \frac{a(1 -\beta)}{\alpha\mu + 1}T^{\alpha\mu + 1}\left\|e^{b.}y^{-\gamma}\right\|_{\alpha\mu\textrm{-H\"ol};T}
 \frac{1}{n^{\alpha\mu + 1}}.
\end{eqnarray}
On the other hand, for each integer $i$ between $0$ and $k - 1$, we consider the two following cases (which are almost symmetric) :
\begin{enumerate}
 \item Suppose that $y_{i + 1}^{n}\geqslant z_{i + 1}^{n}$. Then,
 \begin{displaymath}
 \left(y_{i + 1}^{n}\right)^{-\gamma} -
 \left(z_{i + 1}^{n}\right)^{-\gamma}
 \leqslant 0.
 \end{displaymath}
 Therefore,
 \begin{eqnarray*}
  |y_{i + 1}^{n} - z_{i + 1}^{n}| & = &
  y_{i + 1}^{n} - z_{i + 1}^{n}\\
  & = &
  y_{i}^{n} - z_{i}^{n} +
  \frac{a(1-\beta)T}{n}e^{bt_{i + 1}^{n}}\left[
  (y_{i + 1}^{n})^{-\gamma} -
  (z_{i + 1}^{n})^{-\gamma}\right] -
  \varepsilon_{i}^{n}\\
  & \leqslant &
  |y_{i}^{n} - z_{i}^{n}| + 
  |\varepsilon_{i}^{n}|.
 \end{eqnarray*}
 \item Suppose that $z_{i + 1}^{n} > y_{i + 1}^{n}$. Then,
 \begin{displaymath}
 \left(z_{i + 1}^{n}\right)^{-\gamma} -
 \left(y_{i + 1}^{n}\right)^{-\gamma} < 0.
 \end{displaymath}
 Therefore,
 \begin{eqnarray*}
  |z_{i + 1}^{n} - y_{i + 1}^{n}| & = &
  z_{i + 1}^{n} - y_{i + 1}^{n}\\
  & = &
  z_{i}^{n} - y_{i}^{n} +
  \frac{a(1-\beta)T}{n}e^{bt_{i + 1}^{n}}\left[
  (z_{i + 1}^{n})^{-\gamma} -
  (y_{i + 1}^{n})^{-\gamma}\right] +
  \varepsilon_{i}^{n}\\
  & \leqslant &
  |y_{i}^{n} - z_{i}^{n}| + 
  |\varepsilon_{i}^{n}|.
 \end{eqnarray*}
\end{enumerate}
By putting these cases together :
\begin{equation}\label{EULDmaj3}
\forall i = 0,\dots, k - 1\textrm{, }
|z_{i + 1}^{n} - y_{i + 1}^{n}|
\leqslant
|z_{i}^{n} - y_{i}^{n}| + 
|\varepsilon_{i}^{n}|.
\end{equation}
By applying (\ref{EULDmaj3}) recursively from $k - 1$ down to $0$ :
\begin{eqnarray}
 |y_{k}^{n} - z_{k}^{n}| & \leqslant &
 |y_0 - z_0| +
 \sum_{i = 0}^{k - 1}
 |\varepsilon_{i}^{n}|
 \nonumber\\
 \label{EULmaj3}
 & \leqslant &
 \frac{a(1 -\beta)}{\alpha\mu + 1}T^{\alpha\mu + 1}\left\|e^{b.}y^{-\gamma}\right\|_{\alpha\mu\textrm{-H\"ol};T}
 \frac{1}{n^{\alpha\mu}}
 \xrightarrow[n\rightarrow\infty]{} 0
\end{eqnarray}
because $y_0 = z_0$ and by inequality (\ref{EULmaj2}).
\\
\\
Moreover, from inequality (\ref{EULmaj3}), there exists $N\in\mathbb N^*$ such that for every integer $n > N$,
\begin{displaymath}
|y_{k + 1}^{n} -
z_{k + 1}^{n}|\leqslant
\max_{i = 1,\dots,n}|y_{i}^{n} - z_{i}^{n}|\leqslant
m_y
\end{displaymath}
where,
\begin{displaymath}
m_y =
\frac{1}{2}\min_{s\in [0,T]} y_s.
\end{displaymath}
In particular,
\begin{displaymath}
y_{k + 1}^{n}\geqslant z_{k + 1}^{n} - m_y\geqslant m_y.
\end{displaymath}
Then $(y_{k + 1}^{n})^{-\gamma}\leqslant m_{y}^{-\gamma}$, and
\begin{eqnarray*}
 |y_{t}^{n} - y_{k}^{n}| & = &
 |y_{k + 1}^{n} - y_{k}^{n}|\frac{t - t_{k}^{n}}{t_{k + 1}^{n} - t_{k}^{n}}\\
 & \leqslant &
 \left[a(1-\beta)Te^{bT}
 m_{y}^{-\gamma} +
 T^{\alpha}\|\tilde w\|_{\alpha\textrm{-H\"ol};T}\right]\frac{1}{n^{\alpha}}
 \xrightarrow[n\rightarrow\infty]{} 0.
\end{eqnarray*}
In conclusion, from inequality (\ref{EULmaj1}) :
\begin{eqnarray}
 \label{EULmaj4}
 |y_{t}^{n} - y_t| & \leqslant &
 \left[a(1-\beta)Te^{bT}
 m_{y}^{-\gamma} +
 T^{\alpha}\|\tilde w\|_{\alpha\textrm{-H\"ol};T} +  \|y\|_{\alpha\textrm{-H\"ol};T}\right]\frac{1}{n^{\alpha}} +\\
 & &
 \frac{a(1 -\beta)}{\alpha\mu + 1}T^{\alpha\mu + 1}\left\|e^{b.}y^{-\gamma}\right\|_{\alpha\mu\textrm{-H\"ol};T}
 \frac{1}{n^{\alpha\mu}}
 \xrightarrow[n\rightarrow\infty]{} 0
  \nonumber.
\end{eqnarray}
That achieves the proof because the right hand side of inequality (\ref{EULmaj4}) is not depending on $k$ and $t$.
\end{proof}
\noindent
Finally, for every $n\in\mathbb N^*$ and $t\in [0,T]$, consider $x_{t}^{n} = e^{-bt}(y_{t}^{n})^{\gamma + 1}$.
\\
\\
The following corollary shows that $(x^n,n\in\mathbb N^*)$ is a converging approximation for $x = \tilde\pi(0,x_0;w)$ with $x_0 > 0$. Moreover, as the Euler approximation, it is just necessary to know $x_0$, $w$ and, parameters $a,b,\sigma$ and $\beta > 1 - \alpha$ to approximate the whole path $x$ by $x^n$ :
%


%
\begin{corollary}\label{CVapx}
Under assumptions \ref{Eass} and \ref{HYPw}, for $a > 0$ and $b\geqslant 0$, $(x^n,n\in\mathbb N^*)$ is uniformly converging on $[0,T]$ to $x$ with rate $n^{-\alpha\min(1,\gamma)}$.
\end{corollary}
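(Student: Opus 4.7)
The plan is to reduce the corollary to Theorem \ref{EULlim} by exploiting the explicit relationship $x_t = e^{-bt}y_t^{\gamma+1}$ and $x_t^n = e^{-bt}(y_t^n)^{\gamma+1}$, so that the uniform convergence of $x^n$ to $x$ follows from the uniform convergence of $y^n$ to $y$ together with a Lipschitz-type estimate on the map $u \mapsto u^{\gamma+1}$ over a suitable bounded interval.

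First, write
\begin{displaymath}
x_t^n - x_t = e^{-bt}\bigl[(y_t^n)^{\gamma+1} - y_t^{\gamma+1}\bigr],
\end{displaymath}
so $|x_t^n - x_t| \leq |(y_t^n)^{\gamma+1} - y_t^{\gamma+1}|$ for every $t\in[0,T]$. Next, I would invoke the mean value theorem applied to $u\mapsto u^{\gamma+1}$ on $\mathbb{R}_+$: for $u,v\geqslant 0$,
\begin{displaymath}
|u^{\gamma+1} - v^{\gamma+1}| \leqslant (\gamma+1)\max(u,v)^{\gamma}|u-v|.
\end{displaymath}
The key point is then to bound $\max(y_t^n, y_t)^{\gamma}$ by a constant independent of $n$ and $t$. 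This uses the uniform upper bound on $\|y^n\|_{\infty;T}$ provided by Lemma \ref{EULcv}, together with the analogous bound (already established in the proof of Proposition \ref{MRint}) on $\|y\|_{\infty;T}$. Setting
\begin{displaymath}
M := y_0 + a(1-\beta)e^{bT}y_0^{-\gamma}T + \sigma(b\vee 2)(1-\beta)(1+T)e^{b(1-\beta)T}\|w\|_{\infty;T},
\end{displaymath}
one has $\sup_{n\in\mathbb{N}^*}\|y^n\|_{\infty;T}\vee \|y\|_{\infty;T} \leqslant M$, whence $\max(y_t^n,y_t)^{\gamma} \leqslant M^{\gamma}$ for every $(n,t)$.

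Combining these observations yields
\begin{displaymath}
\|x^n - x\|_{\infty;T} \leqslant (\gamma+1)M^{\gamma}\,\|y^n - y\|_{\infty;T},
\end{displaymath}
and the conclusion with rate $n^{-\alpha\min(1,\gamma)}$ is immediate from Theorem \ref{EULlim}. There is no genuine obstacle here; the only point worth verifying is that the Lipschitz constant $M^{\gamma}$ is indeed uniform in $n$, which is exactly the reason why Lemma \ref{EULcv} was proved with an upper bound independent of $n$. The factor $e^{-bt}\leqslant 1$ contributes nothing, so the rate of convergence is entirely inherited from that of $y^n$ to $y$.
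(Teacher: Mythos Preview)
Your proof is correct and follows essentially the same approach as the paper: both reduce to Theorem \ref{EULlim} via the local Lipschitz property of $u\mapsto u^{\gamma+1}$ on a bounded interval containing the ranges of $y$ and all $y^n$, with the uniform bound supplied by Lemma \ref{EULcv} and Proposition \ref{MRint}. The paper phrases the Lipschitz constant slightly differently (as the constant on $[0,\|y\|_{\infty;T}+\sup_n\|y^n\|_{\infty;T}]$) but the argument is the same.
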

%


%
\begin{proof}
For a given initial condition $x_0 > 0$, it has been shown that $x = e^{-b.}y^{\gamma + 1}$ is the solution of equation (\ref{DETmre}) by putting $y_0 = x_{0}^{1 - \beta}$, where $y$ is the solution of equation (\ref{EDS4}) with initial condition $y_0$.
\\
\\
From Theorem \ref{EULlim} :
\begin{eqnarray*}
 \|x - x^n\|_{\infty;T} & \leqslant &
 C\|y - y^n\|_{\infty;T}\\
 & \leqslant &
 C\left[a(1-\beta)Te^{bT}
 m_{y}^{-\gamma} +
 T^{\alpha}\|\tilde w\|_{\alpha\textrm{-H\"ol};T} +  \|y\|_{\alpha\textrm{-H\"ol};T}\right]\frac{1}{n^{\alpha}} +\\
 & &
 C\frac{a(1 -\beta)}{\alpha\mu + 1}T^{\alpha\mu + 1}\left\|e^{b.}y^{-\gamma}\right\|_{\alpha\mu\textrm{-H\"ol};T}
 \frac{1}{n^{\alpha\mu}}
 \xrightarrow[n\rightarrow\infty]{} 0
\end{eqnarray*}
where, $C$ is the Lipschitz constant of $s\mapsto s^{\gamma + 1}$ on
\begin{displaymath}
\left[0,\|y\|_{\infty;T} + \sup_{n\in\mathbb N^*}\|y^n\|_{\infty;T}\right].
\end{displaymath}
Then, $(x^n,n\in\mathbb N^*)$ is uniformly converging to $x$ with rate $n^{-\alpha\min(1,\gamma)}$.
\end{proof}
\noindent
\textbf{Remark.} When $\alpha > 1/2$ ; $\beta > 1-\alpha > 1/2$ and then $\gamma > 1$. Therefore, $(x^n,n\in\mathbb N^*)$ is uniformly converging with rate $n^{-\alpha} < n^{1 - 2\alpha}$. In other words, the approximation of Corollary \ref{CVapx} converges faster than the classic Euler approximation for equations satisfying assumptions of \cite{LEJ10}, Propositions 5. It is related to the specific form of the vector field $V$.
%


%
\section{Probabilistic properties of the generalized mean-reverting equation}
\noindent
Consider the Gaussian process $W$ and the probability space $(\Omega,\mathcal A,\mathbb P)$ introduced at Section 2. Under Assumption \ref{HYPW}, almost every paths of $W$ are satisfying Assumption \ref{HYPw}. Then, under assumptions \ref{Eass} and \ref{HYPW}, results of Section 3 hold true for $\tilde\pi_V(0,x_0;W)$, with deterministic initial condition $x_0 > 0$.
\\
\\
This section is essentially devoted to complete them on probabilistic side. In particular, we prove that $\tilde\pi_V(0,x_0;W)$ belongs to $L^p(\Omega)$ for every $p\geqslant 1$. We also show that the approximation introduced at Section 3 for $\tilde\pi_V(0,x_0;W)$ is converging in $L^p(\Omega)$ for every $p\geqslant 1$.
\\
\\
\textbf{Remark.} Since $W$ is a $1$-dimensional process, as mentioned at Section 2, there exists an explicit geometric $1/\alpha$-rough path $\mathbb W$ over it, matching with the enhanced Gaussian process provided by P. Friz and N. Victoir at \cite{FV08}, Theorem 15.33. That explains why Assumption \ref{HYPW} is sufficient to extend deterministic results of Section 3 to $\tilde\pi_V(0,x_0;W)$.
%


%
\subsection{Extension of existence results and properties of the solution's distribution}
On one hand, when $\beta\not\in ]1-\alpha,1]$, Proposition \ref{MRprob} extend remark 2 of Proposition \ref{MRres} on probabilistic side. On the other hand, we study properties of the distribution of $X =\tilde\pi_V(0,x_0;W)$ defined on $\mathbb R_+$, when $W = (W_t,t\in\mathbb R_+)$ is a $1$-dimensional Gaussian process with locally $\alpha$-H\"older continuous paths, stationary increments and satisfies a self-similar property.
%


%
\begin{proposition}\label{MRprob}
Consider $a > 0$, $b\geqslant 0$, $\alpha\in ]0,1[$, a process $W$ satisfying Assumption \ref{HYPW}, $x_0 > 0$, $y_0 = x_{0}^{1-\beta}$,
\begin{displaymath}
\sigma^2 =
\sup_{t\in [0,T]}
\mathbb E\left(\tilde W_{t}^{2}\right)
\end{displaymath}
and
\begin{displaymath}
A = \left\{\textrm{$\tilde\pi_V(0,x_0;W)$ is defined on $[0,T]$}\right\}.
\end{displaymath}
If $2\sigma^2\ln(2) < y_{0}^{2}$, then $\mathbb P(A) > 0$.
\end{proposition}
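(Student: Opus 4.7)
The plan is to reduce $\mathbb{P}(A)>0$ to a Gaussian small-ball estimate for the centered process $\tilde W$ and to apply a Borell-type concentration bound whose constants match the hypothesis.

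First I would invoke Remark 2 following Proposition \ref{MRres} pathwise: on the event $\{\inf_{s\in[0,T]}\tilde W_s>-y_0\}$, the hypothesis (\ref{HYPLcase}) is satisfied pathwise and therefore $\tilde\pi_V(0,x_0;W)(\omega)$ is defined on all of $[0,T]$. Consequently
\[
A\;\supseteq\;\{\inf_{s\in[0,T]}\tilde W_s>-y_0\}\;\supseteq\;\{\|\tilde W\|_{\infty;T}<y_0\},
\]
and the problem reduces to bounding $\mathbb{P}(\|\tilde W\|_{\infty;T}<y_0)$ from below.

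Since $\vartheta\in C^\infty([0,T];\mathbb R_+)$ is deterministic and $W$ satisfies Assumption \ref{HYPW}, the Young integral $\tilde W_t=\int_0^t\vartheta_s\,dW_s$ is a centered continuous Gaussian process, symmetric in law (i.e.\ $\tilde W\stackrel{d}{=}-\tilde W$), with $\sup_{t\in[0,T]}\mathbb{E}[\tilde W_t^2]=\sigma^2$. Using the symmetry together with a Gaussian concentration estimate (Borell's inequality, or equivalently a Chernoff bound at the variance-maximizing time), I would establish an inequality of the shape
\[
\mathbb{P}(\|\tilde W\|_{\infty;T}\geq y_0)\;\leq\;2\exp\!\Big(-\frac{y_0^2}{2\sigma^2}\Big),
\]
the factor $2$ coming from the union bound $\|\tilde W\|_{\infty;T}=\max(\sup_s\tilde W_s,\,\sup_s(-\tilde W_s))$ combined with a Gaussian tail bound on each half.

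The conclusion is then immediate: the hypothesis $2\sigma^2\ln 2<y_0^2$ is equivalent to $2\exp(-y_0^2/(2\sigma^2))<1$, and hence
\[
\mathbb{P}(A)\;\geq\;1-2\exp\!\Big(-\frac{y_0^2}{2\sigma^2}\Big)\;>\;0.
\]
The main obstacle is the concentration bound in the middle step: the textbook Borell--Sudakov--Tsirelson inequality naturally produces estimates involving the expected supremum $\mathbb{E}\|\tilde W\|_{\infty;T}$, so obtaining the clean exponent $y_0^2/(2\sigma^2)$ that the hypothesis is calibrated to will likely require either an a priori control on $\mathbb{E}\|\tilde W\|_{\infty;T}$ or a direct Chernoff-type argument that exploits the symmetry of $\tilde W$ and a single-time Gaussian tail estimate at the variance-maximizing point.
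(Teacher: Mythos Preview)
Your approach is essentially the same as the paper's, with one simplification you can adopt: the paper stops at the one-sided inclusion $A\supset\{\inf_{t}\tilde W_t>-y_0\}=\{\sup_t(-\tilde W_t)<y_0\}$ and applies Borell's inequality directly to the centered Gaussian process $-\tilde W$ (citing Adler \cite{ADLER90}, Theorem~2.1) to get $\mathbb P(\sup_t(-\tilde W_t)>y_0)\le 2\exp(-y_0^2/(2\sigma^2))$. There is no need to pass to the two-sided $\|\tilde W\|_{\infty;T}$ or to invoke the symmetry $\tilde W\stackrel{d}{=}-\tilde W$; the ``obstacle'' you flag is handled in the paper simply by invoking that version of Borell's inequality (with the factor $2$ and exponent $y_0^2/(2\sigma^2)$) as a black box.
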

%


%
\begin{proof}
On one hand, by Remark 2 of Proposition \ref{MRres} :
\begin{eqnarray*}
 A & \supset & \{\inf_{t\in [0,T]}\tilde W_t > -y_0\}\\
 & = &
 \{\sup_{t\in [0,T]} -\tilde W_t < y_0\}.
\end{eqnarray*}
On the other hand, since $-\tilde W$ is a $1$-dimensional centered Gaussian process with continuous paths by construction, by Borell's inequality (cf. \cite{ADLER90}, Theorem 2.1) :
\begin{displaymath}
\mathbb P\left(
\sup_{t\in [0,T]}
-\tilde W_t > y_0\right)
\leqslant
2\exp\left(-\frac{y_{0}^{2}}{2\sigma^2}\right)
\end{displaymath}
with $\sigma^2 < \infty$. Therefore,
\begin{eqnarray*}
 \mathbb P(A) & \geqslant &
 1 - \mathbb P\left(
 \sup_{t\in [0,T]}
 -\tilde W_t > y_0\right)\\
 & \geqslant &
 1 - 2\exp\left(-\frac{y_{0}^{2}}{2\sigma^2}\right) > 0.
\end{eqnarray*}
\end{proof}
%


%
\begin{proposition}\label{EDS3si}
Assume that $W = (W_t,t\in\mathbb R_+)$ is a $1$-dimensional centered Gaussian process with locally $\alpha$-H\"older continuous paths, and there exists $h > 0$ such that :
\begin{displaymath}
W_{. + h} - W_h
\stackrel{\mathcal D}{=}
W.
\end{displaymath}
Under Assumption \ref{Eass}, for $a > 0$ and $b\geqslant 0$, with any deterministic initial condition $x_0 > 0$ :
\begin{displaymath}
\tilde\pi_{V;0,t + h}(0,x_0;W)
\stackrel{\mathcal D}{=}
\tilde\pi_{V;0,t}(0,X_h;W)
\end{displaymath}
for every $t\in\mathbb R_+$.
\end{proposition}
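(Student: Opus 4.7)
The plan is to combine a pathwise restart (flow) identity for the generalized M-R equation at the deterministic time $h$ with the stationary-increments hypothesis on $W$. First, I would set $\tilde W_s := W_{s+h} - W_h$ for $s\geqslant 0$. Almost every path of $\tilde W$ inherits the local $\alpha$-H\"older regularity of $W$, and by the hypothesis $\tilde W \stackrel{\mathcal D}{=} W$ as continuous centered Gaussian processes. In particular $\tilde W$ satisfies Assumption \ref{HYPW}, so all the pathwise results of Section 3 apply to the equation driven by $\tilde W$.

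The second step is to establish the pathwise flow identity
\begin{equation*}
X_{t+h} = \tilde\pi_{V;0,t}(0,X_h;\tilde W) \quad\textrm{a.s.}
\end{equation*}
To see this, start from (\ref{EDS3}) at time $t+h$, split the integrals at $u=h$, and change variable $u = s+h$ in the pieces over $[h,t+h]$ to obtain, pathwise,
\begin{equation*}
X_{t+h} = X_h + \int_0^t(a-bX_{s+h})\,ds + \sigma\int_0^t X_{s+h}^\beta\,d\tilde W_s.
\end{equation*}
Thus the process $s\mapsto X_{s+h}$ satisfies equation (\ref{DETmre}) on $[0,t]$ with initial condition $X_h > 0$ (positivity being inherited from the representation $X = y^{\gamma+1}e^{-b\cdot}$ used in Proposition \ref{MRres}) and driving signal $\tilde W$, so uniqueness provided by Proposition \ref{MRres} yields the identity.

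Finally, the distributional conclusion follows from the identity $\tilde W \stackrel{\mathcal D}{=} W$ together with the Borel measurability of the It\^o map $(y_0,w)\mapsto\tilde\pi_{V;0,t}(0,y_0;w)$ from $\mathbb R_+^*\times C^{\alpha\textrm{-H\"ol}}([0,t];\mathbb R)$ into $\mathbb R$ (Proposition \ref{MRcont} composed with evaluation at $t$), once the joint law of $(X_h,\tilde W)$ is identified with that of $(X_h,W)$ via the stationarity hypothesis. The main obstacle is the pathwise translation identity $\sigma\int_h^{t+h} X_u^\beta dW_u = \sigma\int_0^t X_{s+h}^\beta d\tilde W_s$ invoked in the second step: for the Young reading of the equation accessible through (\ref{EDS4}) it is classical, while for the rough-integral reading of (\ref{DETmre}) it reduces, in the present one-dimensional setting, to the fact that the natural step-$[1/\alpha]$ rough path (\ref{GRP1dim}) over $W$ is entirely determined by the increments of $W$, so that its restriction to $[h,t+h]$ re-indexed by $s\in[0,t]$ is exactly the natural lift of $\tilde W$.
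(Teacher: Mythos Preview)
Your approach is correct and parallels the paper's, but with a more direct execution. The paper first applies the change-of-variable formula (Theorem \ref{CVformula}) to pass to $X^{1-\beta}$, which satisfies
\[
X_t^{1-\beta} = x_0^{1-\beta} + (1-\beta)\int_0^t X_u^{-\beta}(a-bX_u)\,du + \sigma(1-\beta)W_t,
\]
so the driving signal enters \emph{additively}; the restart at time $h$ then produces only the bare increment $W_{.+h}-W_h$, the stationarity hypothesis applies at once with no integral manipulation needed, and a second change of variable $u\mapsto u^{\gamma+1}$ returns the conclusion to $X$. You instead keep the original equation and justify the shift of the rough integral directly, using the fact that the one-dimensional natural lift (\ref{GRP1dim}) is a function of increments alone; this is correct and saves the two applications of Theorem \ref{CVformula}, at the cost of having to discuss the integral translation explicitly. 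Both routes close with the same replacement of $\tilde W$ by $W$ in law while retaining the random initial value $X_h$; the paper justifies this in one line (``because $W_{.+h}-W_h\stackrel{\mathcal D}{=}W$''), and your sentence about the joint law of $(X_h,\tilde W)$ versus $(X_h,W)$ is at the same level of detail.
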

%


%
\begin{proof}
By Proposition \ref{MRres}, $X$ has almost surely continuous and strictly positive paths on $\mathbb R_+$. Then, by Theorem \ref{CVformula} applied to almost every paths of $X$ and to the map $u\mapsto u^{1-\beta}$ between $0$ and $t\in\mathbb R_+$ :
\begin{displaymath}
X_{t}^{1-\beta} =
x_{0}^{1-\beta} +
(1-\beta)\int_{0}^{t}
X_{u}^{-\beta}(a - bX_u)du +
\sigma(1-\beta)W_t.
\end{displaymath}
Therefore, $X_{. + h}^{1-\beta}\stackrel{\mathcal D}{=} Z(h)$ where,
\begin{displaymath}
Z_t(h) =
X_{h}^{1-\beta} +
(1-\beta)\int_{0}^{t}Z_{u}^{-\gamma}(h)
\left[a -bZ_{u}^{\gamma + 1}(h)\right]du +
\sigma(1-\beta)W_t
\textrm{ ; }
t\in\mathbb R_+
\end{displaymath}
because $W_{. + h} - W_h\stackrel{\mathcal D}{=}W$.
\\
\\
In conclusion, by applying Theorem \ref{CVformula} to almost every paths of $Z(h)$ and to the map $u\mapsto u^{\gamma + 1}$ :
\begin{displaymath}
X_{t + h} - X_h
\stackrel{\mathcal D}{=}
\int_{0}^{t}
\left(a - bX_{u + h}\right)du +
\sigma\int_{0}^{t}X_{u + h}^{\beta}dW_u
\end{displaymath}
for every $t\in\mathbb R_+$.
\end{proof}
%


%
\begin{proposition}\label{EDS3ss}
Assume that $W = (W_t,t\in\mathbb R_+)$ is a $1$-dimensional centered Gaussian process with locally $\alpha$-H\"older continuous paths, and there exists $h > 0$ such that :
\begin{displaymath}
\forall\varepsilon > 0\textrm{, }
W_{\varepsilon .}
\stackrel{\mathcal D}{=}
\varepsilon^hW.
\end{displaymath}
Under Assumption \ref{Eass}, for $a > 0$ and $b\geqslant 0$, with any deterministic initial condition $x_0 > 0$ :
\begin{displaymath}
\tilde\pi_{V;0,\varepsilon t}
\left(0,x_0;W\right)
\stackrel{\mathcal D}{=}
\tilde\pi_{V_{\varepsilon,h};0,t}\left(0,x_0;W\right)
\end{displaymath}
for every $t\in\mathbb R_+$ and $\varepsilon > 0$, with :
\begin{displaymath}
\forall x\in\mathbb R_+\textrm{, }
\forall t,w\in\mathbb R\textrm{, }
V_{\varepsilon,h}(x).(t,w) =
\varepsilon (a - bx)t +\sigma\varepsilon^hx^{\beta}w.
\end{displaymath}
\end{proposition}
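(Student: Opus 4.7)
The plan is to mimic the strategy of Proposition \ref{EDS3si}, using the change of variable formula (Theorem \ref{CVformula}) to convert the rough differential equation into a form where the scaling property of $W$ can be applied directly, then inverting the transformation. Throughout, $X$ denotes $\tilde\pi_{V;0,.}(0,x_0;W)$, which by Proposition \ref{MRres} has almost surely continuous, strictly positive paths on $\mathbb R_+$.

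First, as in the proof of Proposition \ref{EDS3si}, I would apply Theorem \ref{CVformula} to almost every path of $X$ with the map $u\mapsto u^{1-\beta}$ (which is smooth on the range of $X$) to obtain, for every $t\in\mathbb R_+$,
\begin{displaymath}
X_t^{1-\beta} = x_0^{1-\beta} + (1-\beta)\int_0^t X_u^{-\beta}(a - bX_u)\,du + \sigma(1-\beta)W_t.
\end{displaymath}

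Next, fix $\varepsilon > 0$ and define $\tilde X_t = X_{\varepsilon t}$. Substituting $\varepsilon t$ for $t$ in the above and performing the change of variable $u = \varepsilon s$ in the drift integral yields
\begin{displaymath}
\tilde X_t^{1-\beta} = x_0^{1-\beta} + \varepsilon(1-\beta)\int_0^t \tilde X_s^{-\beta}(a - b\tilde X_s)\,ds + \sigma(1-\beta)W_{\varepsilon t}.
\end{displaymath}
Now invoking the self-similarity hypothesis $W_{\varepsilon .}\stackrel{\mathcal D}{=}\varepsilon^h W$, one gets $\tilde X\stackrel{\mathcal D}{=} Z$, where $Z$ is the (pathwise unique) solution on $\mathbb R_+$ of
\begin{displaymath}
Z_t^{1-\beta} = x_0^{1-\beta} + \varepsilon(1-\beta)\int_0^t Z_s^{-\beta}(a - bZ_s)\,ds + \sigma\varepsilon^h(1-\beta)W_t.
\end{displaymath}

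Finally, I would apply Theorem \ref{CVformula} to almost every path of $Z$ together with the map $u\mapsto u^{\gamma+1}$, exactly as in Lemma \ref{EDS3change}, to check that $Z$ satisfies
\begin{displaymath}
Z_t = x_0 + \int_0^t \varepsilon(a - bZ_s)\,ds + \sigma\varepsilon^h\int_0^t Z_s^{\beta}\,dW_s,
\end{displaymath}
i.e.\ that $Z = \tilde\pi_{V_{\varepsilon,h};0,.}(0,x_0;W)$ in the sense of the rough differential equation driven by the natural geometric rough path over $W$. Combined with the distributional identity $\tilde X\stackrel{\mathcal D}{=} Z$, this yields the claim.

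The only subtle point is that the local boundedness conditions needed to apply Theorem \ref{CVformula} (the vector fields involved must be Lipschitz in the sense of Stein on the relevant range) require knowing that $X$, $\tilde X$, and $Z$ remain in a compact subset of $\mathbb R_+^*$ up to each finite horizon; this is already guaranteed by Proposition \ref{MRres}, so the argument goes through by localization exactly as in the proof of Proposition \ref{EDS3si}.
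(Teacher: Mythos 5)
Your proposal follows the paper's proof essentially step for step: apply the change-of-variable formula with $u\mapsto u^{1-\beta}$, rescale time, invoke self-similarity of $W$, and invert with $u\mapsto u^{\gamma+1}$, using Proposition \ref{MRres} to justify the smoothness of the transformations on the relevant range. The only blemish is notational — in the last step the map $u\mapsto u^{\gamma+1}$ should be applied to $Z^{1-\beta}$ rather than to $Z$, mirroring the paper's $Z(\varepsilon)$ — but the mathematics is identical.
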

%


%
\begin{proof}
By Proposition \ref{MRres}, $X$ has almost surely continuous and strictly positive paths on $\mathbb R_+$. Then, by Theorem \ref{CVformula} applied to almost every paths of $X$ and to the map $u\mapsto u^{1-\beta}$ between $0$ and $t\in\mathbb R_+$ :
\begin{displaymath}
X_{t}^{1-\beta} =
x_{0}^{1-\beta} +
(1-\beta)\int_{0}^{t}
X_{u}^{-\beta}(a - bX_u)du +
\sigma(1-\beta)W_t.
\end{displaymath}
Therefore, for every $\varepsilon > 0$, $X_{\varepsilon .}^{1-\beta}\stackrel{\mathcal D}{=} Z(\varepsilon)$ where,
\begin{displaymath}
Z_t(\varepsilon) =
x_{0}^{1 -\beta} +
\varepsilon(1-\beta)\int_{0}^{t}Z_{u}^{-\gamma}(\varepsilon)
\left[a -bZ_{u}^{\gamma + 1}(\varepsilon)\right]du +
\varepsilon^h\sigma(1-\beta)W_t
\textrm{ ; }
t\in\mathbb R_+
\end{displaymath}
because $W_{\varepsilon .}\stackrel{\mathcal D}{=}\varepsilon^hW$.
\\
\\
In conclusion, by applying Theorem \ref{CVformula} to almost every paths of $Z(\varepsilon)$ and to the map $u\mapsto u^{\gamma + 1}$ :
\begin{displaymath}
X_{\varepsilon t}
\stackrel{\mathcal D}{=}
x_0 +
\varepsilon\int_{0}^{t}
\left(a - bX_{\varepsilon u}\right)du +
\sigma\varepsilon^h\int_{0}^{t}X_{\varepsilon u}^{\beta}dW_u
\end{displaymath}
for every $t\in\mathbb R_+$ and $\varepsilon > 0$.
\end{proof}
\noindent
\textbf{Remark.} Typically, mean-reverting equations driven by a fractional Brownian motion are concerned by propositions \ref{EDS3si} and \ref{EDS3ss}.
%


%
\begin{proposition}\label{HTfbm}
Consider $a > 0$, $b\geqslant 0$ and a $1$-dimensional fractional Brownian motion $(B_{t}^{H},t\in\mathbb R_+)$ with Hurst parameter $H\in ]0,1[$. Under Assumption \ref{Eass}, for every $\varepsilon > 0$ ($x_0 > 0$) :
\begin{displaymath}
\tau_{\varepsilon}^{4} =
\inf\left\{
t\geqslant 0 : \tilde\pi_V(0,x_0;B^H)_t = \varepsilon
\right\} < \infty
\textrm{ $\mathbb P$-p.s.}
\end{displaymath}
\end{proposition}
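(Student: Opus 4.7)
The plan is to reduce hitting times of $X := \tilde\pi_V(0,x_0;B^H)$ to those of the auxiliary process $Y$ solving (\ref{EDS4}), and then to exploit the unboundedness of the fluctuations of $B^H$. By Lemma~\ref{EDS3change} and Proposition~\ref{MRres}, $X = Y^{\gamma+1}e^{-b\cdot}$, so the event $\{X_t = \varepsilon\}$ coincides with $\{Y_t = \varepsilon^{1-\beta}e^{b(1-\beta)t}\}$. Since $X_0 = x_0$, the case $\varepsilon = x_0$ is trivial, and I treat $\varepsilon > x_0$ and $\varepsilon < x_0$ separately; by continuity of $X$ and the intermediate value theorem it suffices, in each case, to show that $X$ eventually lies strictly on the opposite side of the level $\varepsilon$ from $x_0$.

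For $\varepsilon > x_0$: the positivity of the drift integral in (\ref{EDS4}) gives
\begin{displaymath}
Y_t \geqslant y_0 + \tilde W_t, \qquad \text{hence} \qquad X_t \geqslant \bigl((y_0 + \tilde W_t)_+\,e^{-b(1-\beta)t}\bigr)^{\gamma+1}.
\end{displaymath}
I would then establish $\limsup_{t \to \infty} \tilde W_t\,e^{-b(1-\beta)t} = +\infty$ almost surely. For $b = 0$ this is the classical law of iterated logarithm $\limsup_{t\to\infty} B^H_t = +\infty$; for $b > 0$ the process $\tilde W_\cdot\, e^{-b(1-\beta)\cdot}$ is a fractional Ornstein--Uhlenbeck process whose asymptotically stationary Gaussian structure, together with an ergodicity argument, yields $\limsup = +\infty$ and $\liminf = -\infty$ almost surely. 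Combined with the lower bound above, $\limsup_{t\to\infty} X_t = +\infty$ a.s., so $X$ crosses $\varepsilon$ at some finite time.

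For $\varepsilon < x_0$: I would argue by contradiction, assuming $X_t > \varepsilon$ (equivalently $Y_t > \varepsilon^{1-\beta}e^{b(1-\beta)t}$) for every $t\geqslant 0$. Then $Y_s^{-\gamma} < \varepsilon^{-\beta}e^{-b\beta s}$, so the drift integral satisfies
\begin{displaymath}
a(1-\beta)\int_0^t Y_s^{-\gamma}\,e^{bs}\,ds \leqslant \frac{a\varepsilon^{-\beta}}{b}\bigl(e^{b(1-\beta)t} - 1\bigr)
\end{displaymath}
(linear in $t$ when $b = 0$). Substituting in (\ref{EDS4}) and using the standing assumption $Y_t > \varepsilon^{1-\beta}e^{b(1-\beta)t}$ yields the necessary condition
\begin{displaymath}
\tilde W_t > \varepsilon^{-\beta}(\varepsilon - a/b)\,e^{b(1-\beta)t} + \bigl(a\varepsilon^{-\beta}/b - y_0\bigr) \qquad \text{for all } t \geqslant 0.
\end{displaymath}
When $\varepsilon \geqslant a/b$, this forces $\liminf_{t\to\infty} \tilde W_t\,e^{-b(1-\beta)t} \geqslant \varepsilon^{-\beta}(\varepsilon - a/b) \geqslant 0$, contradicting $\liminf_{t\to\infty} \tilde W_t\,e^{-b(1-\beta)t} = -\infty$ almost surely, again by the ergodic behaviour of the fractional Ornstein--Uhlenbeck process.

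The main obstacle is the residual regime $0 < \varepsilon < a/b$ (and the analogous small-$\varepsilon$ regime for $b = 0$), where the right-hand side above tends to $-\infty$ and the direct contradiction breaks down. To cover it, I would combine two ingredients. First, the topological support of $B^H$ in $C^{\alpha\textrm{-H\"ol}}([0,T];\mathbb R)$ contains every $\alpha$-H\"older path starting at $0$, so by the continuity of the It\^o map (Proposition~\ref{MRcont}) one can construct a deterministic $\alpha$-H\"older path $h$ for which $\tilde\pi_V(0,x_0;h)$ reaches $\varepsilon$ at some finite time $T_0$; a small ball around $h$ in the $\alpha$-H\"older topology thus gives $\mathbb P(\tau_\varepsilon^4 \leqslant T_0) > 0$. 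Second, iterate this estimate along the time-shifted increments $B^H_{\cdot + kT_0} - B^H_{kT_0}$, which have the same law as $B^H$ by stationary increments (Proposition~\ref{EDS3si}), and use the ergodicity of fractional Brownian motion under the time-shift flow to promote the positive-probability hitting into the almost-sure finiteness of $\tau_\varepsilon^4$.
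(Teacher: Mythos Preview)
Your route differs from the paper's. The paper works not with $Y$ but with $Z = X^{1-\beta}$, which by the change of variable formula satisfies
\[
Z_t = Z_0 + (1-\beta)\int_0^t(aZ_s^{-\gamma} - bZ_s)\,ds + \sigma(1-\beta)B_t^H,
\]
so the noise term is $B_t^H$ itself rather than the Young integral $\tilde W$. Assuming the relevant hitting time is infinite, the paper extracts in each case a one-sided inequality of the form $B_t^H \geq ct + d$ (target below start) or $B_t^H \leq ct + d$ (target above start), valid for every $t\geq 0$, and then simply bounds $\mathbb P(\tau = \infty)$ by the single-time Gaussian tail $\mathbb P(\pm B_t^H \geq ct + d)$, using $B_t^H \sim \mathcal N(0, t^{2H})$ and letting $t \to \infty$. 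No fractional Ornstein--Uhlenbeck process, no ergodic theory, no pathwise $\limsup/\liminf$: just an explicit density computation. Your approach trades this elementary calculation for the heavier ergodicity of the fractional OU process; in exchange your upward case $\varepsilon > x_0$ is handled uniformly in $\varepsilon$, which is a genuine advantage.

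The residual regime $0 < \varepsilon < a/b$ (and, for $b = 0$, every $\varepsilon < x_0$) is a real gap in your proposal. The support-theorem step only yields $\mathbb P(\tau_\varepsilon^4 \leq T_0) > 0$ for the fixed starting point $x_0$; to iterate you would need a comparable lower bound for the solution started from the random position $X_{kT_0}$ and driven by $B^H_{\cdot + kT_0} - B^H_{kT_0}$ \emph{conditionally on} $\mathcal F_{kT_0}$. For $H \neq 1/2$ fractional Brownian motion is neither Markov nor has independent increments, so that conditional law is not the unconditional one and the first-block estimate cannot be recycled. Invoking ergodicity of the increment process under time shifts does not rescue the argument either: the event $\{\tau_\varepsilon^4 = \infty\}$ is anchored at the deterministic initial condition $x_0$ and is not shift-invariant, so no 0--1 law is available. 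Closing this gap would require either a hitting estimate uniform over starting points in a set visited infinitely often, or a genuine coupling of solutions from different initial data; you supply neither, and neither is immediate here.
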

%


%
\begin{proof}
Consider $\varepsilon > 0$ and
\begin{displaymath}
\tau_{\varepsilon}^{5} =
\inf\left\{t\geqslant 0 : Z_t =\varepsilon\right\}
\end{displaymath}
where, $Z = \tilde\pi_{V}^{1-\beta}(0,x_0;B^H)$.
\\
\\
\textbf{Case 1} ($\varepsilon\leqslant x_{0}^{1-\beta}$). On one hand, since $\tau_{\varepsilon}^{5} = \infty$ if and only if $Z_t > \varepsilon$ for every $t\in\mathbb R_+$, and
\begin{displaymath}
Z_t =
Z_0 +
(1-\beta)
\int_{0}^{t}
(aZ_{s}^{-\gamma} - bZ_s)ds +
\sigma(1 - \beta)B_{t}^{H},
\end{displaymath}
then $\tau_{\varepsilon}^{5} = \infty$ implies that :
\begin{displaymath}
\forall t\in\mathbb R_+
\textrm{, }
B_{t}^{H}\geqslant
\frac{(1-\beta)(b\varepsilon - \varepsilon^{-\gamma}a)t +\varepsilon - Z_0}{\sigma(1-\beta)}.
\end{displaymath}
Therefore,
\begin{eqnarray*}
 \mathbb P(\tau_{\varepsilon}^{5} = \infty)
 & \leqslant &
 \mathbb P\left[
 \forall t\in\mathbb R_+
 \textrm{, }
 B_{t}^{H}\geqslant
\frac{(1-\beta)(b\varepsilon - \varepsilon^{-\gamma}a)t +\varepsilon - Z_0}{\sigma(1-\beta)}\right]\\
 & \leqslant &
 \mathbb P\left[
 B_{t}^{H}\geqslant
\frac{(1-\beta)(b\varepsilon - \varepsilon^{-\gamma}a)t +\varepsilon - Z_0}{\sigma(1-\beta)}\right]
\end{eqnarray*}
for every $t\in\mathbb R_+$.
\\
\\
On the other hand, since $B_{t}^{H}\rightsquigarrow\mathcal N(0,t^{2H})$ :
\begin{displaymath}
\mathbb P\left[
B_{t}^{H}\geqslant
\frac{(1-\beta)(b\varepsilon - \varepsilon^{-\gamma}a)t +\varepsilon - Z_0}{\sigma^2(1-\beta)^2}\right] =
\frac{1}{t^H\sqrt{2\pi}}
\int_{0}^{\infty}
\varphi(\xi,t)d\xi
\end{displaymath}
with
\begin{displaymath}
\varphi(\xi,t) =
\exp\left[-
\frac{\left[\xi + (1-\beta)(b\varepsilon - \varepsilon^{-\gamma}a)t +\varepsilon - Z_0\right]^2}{\sigma^2(1-\beta)^2t^{2H}}
\right].
\end{displaymath}
For every $\xi\in\mathbb R$ and every $\varepsilon > 0$,
\begin{displaymath}
\lim_{t\rightarrow\infty}
\varphi(\xi,t) =
\lim_{t\rightarrow\infty}
\exp\left[-
\frac{\left[\xi + (1-\beta)(b\varepsilon - \varepsilon^{-\gamma}a)\right]^2}{\sigma^2(1-\beta)^2}t^{2(1 - H)}
\right] =
0,
\end{displaymath}
and $t\in\mathbb R_{+}^{*}\mapsto\varphi(\xi,t)$ is a continuous, decreasing map. Then, for every $t\geqslant 1$,
\begin{displaymath}
|\varphi(\xi,t)|
\leqslant
|\varphi(\xi,1)|
\sim_{\xi\rightarrow\infty}
\exp\left[
-\frac{\xi^2}{\sigma^2(1-\beta)^2}
\right]
\in L^1(\mathbb R ; d\xi).
\end{displaymath}
Therefore, by Lebesgue's theorem :
\begin{displaymath}
\lim_{t\rightarrow\infty}
\mathbb P\left[
B_{t}^{H}\geqslant
\frac{(1-\beta)(b\varepsilon - \varepsilon^{-\gamma}a)t +\varepsilon - Z_0}{\sigma(1-\beta)}\right] = 0,
\end{displaymath}
and for every $\varepsilon\in ]0,x_{0}^{1-\beta}]$, $\tau_{\varepsilon}^{5} < \infty$ almost surely.
\\
\\
\textbf{Case 2} ($\varepsilon > x_{0}^{1-\beta}$). In that case, $\tau_{\varepsilon}^{5} = \infty$ if and only if, $0 < Z_t < \varepsilon$ for every $t\in\mathbb R_+$. Then, with ideas of the first case :
\begin{eqnarray*}
 \mathbb P(\tau_{\varepsilon}^{5} = \infty)
 & \leqslant &
 \mathbb P\left[
 B_{t}^{H}\leqslant
\frac{(1-\beta)(b\varepsilon - \varepsilon^{-\gamma}a)t +\varepsilon - Z_0}{\sigma(1-\beta)}\right]\\
 & \leqslant &
 \frac{1}{t^H\sqrt{2\pi}}
 \int_{-\infty}^{0}\varphi(\xi,t)d\xi
\end{eqnarray*}
for every $t\in\mathbb R_+$.
\\
\\
Moreover, results on $\varphi$ have been established for every $\xi\in\mathbb R$ and every $\varepsilon > 0$ at case 1 then, by Lebesgue's theorem :
\begin{displaymath}
\lim_{t\rightarrow\infty}
\mathbb P\left[
B_{t}^{H}\leqslant
\frac{(1-\beta)(b\varepsilon - \varepsilon^{-\gamma}a)t +\varepsilon - Z_0}{\sigma(1-\beta)}\right] = 0,
\end{displaymath}
and for every $\varepsilon > x_{0}^{1-\beta}$, $\tau_{\varepsilon}^{5} < \infty$ almost surely.
\\
\\
In conclusion, since $\tau_{\varepsilon}^{4} = \tau_{\varepsilon^{1-\beta}}^{5}$ by Lemma \ref{EDS3change}, for every $\varepsilon > 0$, $\tau_{\varepsilon}^{4} < \infty$ almost surely.
\end{proof}
%


%
\subsection{Integrability and convergence results}
Consider the implicit Euler approximation $(Y^n,n\in\mathbb N^*)$ for the following SDE :
\begin{displaymath}
Y_t =
y_0 +
a(1-\beta)
\int_{0}^{t}
Y_{s}^{-\gamma}e^{bs}ds +
\tilde W_t
\textrm{ ; }
t\in [0,T]
\textrm{, }
y_0 > 0
\end{displaymath}
where,
\begin{displaymath}
\tilde W_t =
\int_{0}^{t}\vartheta_sdW_s
\textrm{ and }
\vartheta_t =
\sigma(1-\beta)e^{b(1-\beta)t}
\end{displaymath}
for every $t\in [0,T]$.
%


%
\begin{proposition}\label{INTsol_eul}
Under assumptions \ref{Eass} and \ref{HYPW}, for $a > 0$ and $b\geqslant 0$, with any deterministic initial condition $x_0 > 0$ :
\begin{enumerate}
 \item $\|\tilde\pi_V(0,x_0;W)\|_{\infty;T}$ belongs to $L^p(\Omega)$ for every $p\geqslant 1$.
 \item For every $p\geqslant 1$,
 \begin{displaymath}
 \sup_{n\in\mathbb N^*}
 \left\|X^n\right\|_{\infty;T}
 \in L^p(\Omega)
 \end{displaymath}
 where, for every $n\in\mathbb N^*$, $X^n = e^{-b.}(Y^n)^{\gamma + 1}$ with $y_0 = x_{0}^{1 -\beta}$.
\end{enumerate}
\end{proposition}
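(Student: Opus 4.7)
The plan is to combine the pathwise upper bounds already established in Proposition \ref{MRint} and Lemma \ref{EULcv} with the integrability of the uniform norm of a centered continuous Gaussian process. Both bounds are affine in $\|w\|_{\infty;T}$ up to the power $\gamma+1$, so the probabilistic content reduces entirely to controlling the moments of $\|W\|_{\infty;T}$.

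For assertion (1), I would apply Proposition \ref{MRint} pathwise to the random driving signal $W$ to obtain
\begin{displaymath}
\|\tilde\pi_V(0,x_0;W)\|_{\infty;T}
\leqslant
\left[C_1 + C_2\|W\|_{\infty;T}\right]^{\gamma+1},
\end{displaymath}
where $C_1,C_2 > 0$ are the deterministic constants read off from the explicit upper-bound of Proposition \ref{MRint}; they depend only on $x_0$, $a$, $b$, $\sigma$, $\beta$ and $T$. For assertion (2), since $b\geqslant 0$ we have $\|X^n\|_{\infty;T} \leqslant \|Y^n\|_{\infty;T}^{\gamma+1}$, and Lemma \ref{EULcv} delivers exactly the same $n$-independent pathwise bound
\begin{displaymath}
\sup_{n\in\mathbb N^*}\|X^n\|_{\infty;T}
\leqslant
\left[C_1 + C_2\|W\|_{\infty;T}\right]^{\gamma+1},
\end{displaymath}
with the same constants $C_1, C_2$ (after replacing $y_0$ by $x_{0}^{1-\beta}$).

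To conclude, it suffices to prove that $\|W\|_{\infty;T}$ has finite moments of all orders. Under Assumption \ref{HYPW}, $W$ is a centered Gaussian process with continuous paths, so Fernique's theorem (equivalently, Borell's inequality already invoked in the proof of Proposition \ref{MRprob}) yields $\mathbb E[\exp(\lambda\|W\|_{\infty;T}^2)] < \infty$ for some $\lambda > 0$; in particular $\|W\|_{\infty;T}\in L^q(\Omega)$ for every $q\geqslant 1$. Combining this with $(u+v)^q\leqslant 2^{q-1}(u^q+v^q)$ and the two previous bounds, taken with $q = p(\gamma+1)$, gives both $L^p$-integrability statements for arbitrary $p\geqslant 1$. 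There is no genuine obstacle here: the proof is essentially bookkeeping that leverages the deterministic bounds of Section 3 together with the Gaussian integrability of the uniform norm of $W$; the only point worth checking is that the constant in Lemma \ref{EULcv} is indeed independent of $n$, which it manifestly is.
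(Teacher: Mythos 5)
Your proof is correct and follows essentially the same route as the paper: apply the deterministic pathwise bounds from Proposition \ref{MRint} and Lemma \ref{EULcv} (noting $e^{-bt}\leqslant 1$ for $b\geqslant 0$), then invoke Fernique's theorem to get finite moments of all orders for $\|W\|_{\infty;T}$, and conclude by taking the exponent $q = p(\gamma+1)$. The paper's own proof is just a more compressed version of this same bookkeeping.
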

%


%
\begin{proof}
On one hand, by Proposition \ref{MRint} and Fernique's theorem :
\begin{eqnarray*}
 \left\|\tilde\pi_V(0,x_0;W)\right\|_{\infty;T}
 & \leqslant &
 \left[
 x_{0}^{1-\beta} + a(1-\beta)e^{bT}x_{0}^{-\beta}T +\right.\\
 & &
 \left.
 \sigma(b\vee 2)(1-\beta)(1 + T)e^{b(1-\beta)T}\|W\|_{\infty;T}
 \right]^{\gamma + 1}\in L^p(\Omega)
\end{eqnarray*}
for every $p\geqslant 1$.
\\
\\
On the other hand, by Lemma \ref{EULcv} and Fernique's theorem :
\begin{eqnarray*}
 \sup_{n\in\mathbb N^*}
 \left\|Y^n\right\|_{\infty;T}
 & \leqslant &
 y_0 + a(1-\beta)e^{bT}y_{0}^{-\gamma}T +\\
 & &
 \sigma(b\vee 2)(1-\beta)(1 + T)e^{b(1-\beta)T}\|W\|_{\infty;T}\in
 L^q(\Omega)
\end{eqnarray*}
for every $q\geqslant 1$. Then, by putting $q = (\gamma + 1)p$ for every $p\geqslant 1$,
\begin{displaymath}
\sup_{n\in\mathbb N^*}
\left\|X^n\right\|_{\infty;T}
\in L^p(\Omega).
\end{displaymath}
\end{proof}
%


%
\begin{corollary}\label{CVXprob}
Under assumptions \ref{Eass} and \ref{HYPW}, for $a > 0$ and $b\geqslant 0$, with any deterministic initial condition $x_0 > 0$, $(X^n,n\in\mathbb N^*)$ is uniformly converging on $[0,T]$ to $\tilde\pi_V(0,x_0;w)$ in $L^p(\Omega)$ for every $p\geqslant 1$.
\end{corollary}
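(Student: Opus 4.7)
The plan is to upgrade the pathwise convergence established in Corollary \ref{CVapx} to $L^p$-convergence by combining it with the uniform-in-$n$ integrability bounds furnished by Proposition \ref{INTsol_eul}, and closing the argument via Lebesgue's dominated convergence theorem.

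First, I would invoke Assumption \ref{HYPW} to note that $\mathbb P$-almost every trajectory of $W$ is $\alpha$-H\"older continuous on $[0,T]$, hence satisfies Assumption \ref{HYPw}. Applying Corollary \ref{CVapx} $\omega$-by-$\omega$ then gives
\begin{equation*}
\|X^n -\tilde\pi_V(0,x_0;W)\|_{\infty;T}\xrightarrow[n\to\infty]{} 0\quad\mathbb P\textrm{-a.s.},
\end{equation*}
in fact at pathwise rate $n^{-\alpha\min(1,\gamma)}$, though the rate itself is not used here.

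Second, for any $p\geqslant 1$, I would dominate the quantity of interest: by the triangle inequality and the elementary bound $(u+v)^p\leqslant 2^{p-1}(u^p + v^p)$,
\begin{equation*}
\|X^n -\tilde\pi_V(0,x_0;W)\|_{\infty;T}^{p}\leqslant 2^{p-1}\left[\left(\sup_{m\in\mathbb N^*}\|X^m\|_{\infty;T}\right)^{p} + \|\tilde\pi_V(0,x_0;W)\|_{\infty;T}^{p}\right].
\end{equation*}
By Proposition \ref{INTsol_eul}, both $\sup_{m\in\mathbb N^*}\|X^m\|_{\infty;T}$ and $\|\tilde\pi_V(0,x_0;W)\|_{\infty;T}$ belong to $L^q(\Omega)$ for every $q\geqslant 1$ (applied with $q = p$), so the right-hand side is an $n$-independent, $\mathbb P$-integrable dominating random variable.

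Finally, Lebesgue's dominated convergence theorem yields $\mathbb E(\|X^n -\tilde\pi_V(0,x_0;W)\|_{\infty;T}^{p})\to 0$, which is precisely the claimed $L^p$-convergence. No genuine obstacle arises here: the pathwise convergence and the uniform $L^p$ bound have both already been established, so the corollary is essentially a packaging step. The only point requiring minor care is to verify that the pathwise conclusion of Corollary \ref{CVapx} holds on an event of full $\mathbb P$-measure, which is immediate from Assumption \ref{HYPW}.
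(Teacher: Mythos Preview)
Your proof is correct and follows essentially the same approach as the paper: both combine the almost sure pathwise convergence from Corollary \ref{CVapx} with the uniform integrability bounds of Proposition \ref{INTsol_eul} to conclude $L^p$-convergence. The only cosmetic difference is that the paper invokes Vitali's convergence theorem whereas you build an explicit dominating random variable and apply Lebesgue's dominated convergence theorem; since Proposition \ref{INTsol_eul} already supplies a single $n$-independent $L^p$ majorant, your route is if anything slightly more direct.
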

%


%
\begin{proof}
By Corollary \ref{CVapx} :
\begin{displaymath}
\left\|X^n - \tilde\pi_V(0,x_0;W)\right\|_{\infty;T}
\xrightarrow[n\rightarrow\infty]{\mathbb P\textrm{-a.s.}}
0.
\end{displaymath}
Then, by Proposition \ref{INTsol_eul} and Vitali's convergence theorem, $(X^n,n\in\mathbb N^*)$ is uniformly converging to $ \tilde\pi_V(0,x_0;W)$ in $L^p(\Omega)$ for every $p\geqslant 1$.
\end{proof}
\noindent
\textbf{Remark.} Note that Proposition \ref{INTsol_eul} is crucial to ensure this convergence in $L^p(\Omega)$ for every $p\geqslant 1$. Indeed, inequality (\ref{EULmaj4}) doesn't allow to conclude because it is not sure that $\|e^{b.}Y^{-\gamma}\|_{\alpha\mu\textrm{-H\"ol};T}\in L^1(\Omega)$.
%


%
\subsection{A large deviation principle for the generalized M-R equation}
We establish a large deviation principle for the generalized mean-reverting equation (as P. Friz and N. Victoir at \cite{FV08}, Section 19.4).
\\
\\
First of all, let's remind basics on large deviations (for details, the reader can refer to \cite{DZ98}).
\\
\\
Throughout this subsection, assume that $\inf(\emptyset) = \infty$.
%


%
\begin{definition}\label{LDP}
Let $E$ be a topological space and let $I : E\rightarrow [0,\infty]$ be a good rate function (i.e. a lower semicontinuous map such that $\{x\in E : I(x)\leqslant\lambda\}$ is a compact subset of $E$ for every $\lambda\geqslant 0$).
\\
\\
A family $(\mu_{\varepsilon},\varepsilon > 0)$ of probability measures on $(E,\mathcal B(E))$ satisfies a large deviation principle with good rate function $I$ if and only if, for every $A\in\mathcal B(E)$,
\begin{displaymath}
-I(A^{\circ})\leqslant
\underline{\lim}_{\varepsilon\rightarrow 0}
\varepsilon\log\left[\mu_{\varepsilon}(A)\right]\leqslant
\overline{\lim}_{\varepsilon\rightarrow 0}
\varepsilon\log\left[\mu_{\varepsilon}(A)\right]\leqslant
-I(\bar A)
\end{displaymath}
where,
\begin{displaymath}
\forall A\in\mathcal B(E)\textrm{, }
I(A) =
\inf_{x\in A}
I(x).
\end{displaymath}
\end{definition}
%


%
\begin{proposition}\label{CPrinciple}
Consider $E$ and $F$ two Hausdorff topological spaces, a continuous map $f : E\rightarrow F$ and a family $(\mu_{\varepsilon},\varepsilon > 0)$ of probability measures on $(E,\mathcal B(E))$.
\\
\\
If $(\mu_{\varepsilon},\varepsilon > 0)$ satisfies a large deviation principle with good rate function $I : E\rightarrow [0,\infty]$, then $(\mu_{\varepsilon}\circ f^{-1},\varepsilon > 0)$ satisfies a large deviation principle on $(F,\mathcal B(F))$ with good rate function $J : F\rightarrow [0,\infty]$ such that :
\begin{displaymath}
J(y) =
\inf\left\{I(x) ; x\in E\textrm{ and }f(x) = y\right\}
\end{displaymath}
for every $y\in F$.
\end{proposition}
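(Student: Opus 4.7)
The plan is to split the proof into two parts: first verify that $J$ is itself a good rate function on $F$, and then transfer the large deviation bounds from $E$ to $F$ via the simple observation $(\mu_\varepsilon\circ f^{-1})(A) = \mu_\varepsilon(f^{-1}(A))$, exploiting the continuity of $f$.

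To show $J$ is a good rate function, I would establish the key level-set identity
\begin{displaymath}
\left\{y\in F : J(y)\leqslant\lambda\right\} = f\left(\left\{x\in E : I(x)\leqslant\lambda\right\}\right)
\quad\text{for every }\lambda\geqslant 0.
\end{displaymath}
The inclusion $\supset$ is immediate from the definition of $J$. For the inclusion $\subset$, if $J(y)\leqslant\lambda$, then for each $n\in\mathbb N^*$ there exists $x_n\in f^{-1}(\{y\})$ with $I(x_n)\leqslant\lambda + 1/n$; the sequence $(x_n)$ lies in the compact set $\{I\leqslant\lambda + 1\}$, hence admits a cluster point $x^*$, and the lower semicontinuity of $I$ together with the continuity of $f$ give $I(x^*)\leqslant\lambda$ and $f(x^*) = y$, so the infimum defining $J(y)$ is attained. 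Once the identity is proved, $\{J\leqslant\lambda\}$ is the continuous image of the compact set $\{I\leqslant\lambda\}$, hence compact; since $F$ is Hausdorff it is closed, which gives the lower semicontinuity of $J$.

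For the large deviation bounds, fix $A\in\mathcal B(F)$. Continuity of $f$ gives $f^{-1}(A^{\circ})\subset (f^{-1}(A))^{\circ}$ and $\overline{f^{-1}(A)}\subset f^{-1}(\bar A)$. Applying the LDP for $(\mu_\varepsilon)$ to the Borel set $f^{-1}(A)\in\mathcal B(E)$ and using monotonicity of $I$ on subsets, one obtains
\begin{displaymath}
\underline{\lim}_{\varepsilon\rightarrow 0}\varepsilon\log\left[(\mu_\varepsilon\circ f^{-1})(A)\right]\geqslant -I(f^{-1}(A^{\circ})) = -\inf_{x\in f^{-1}(A^{\circ})}I(x) = -\inf_{y\in A^{\circ}}J(y) = -J(A^{\circ}),
\end{displaymath}
and symmetrically
\begin{displaymath}
\overline{\lim}_{\varepsilon\rightarrow 0}\varepsilon\log\left[(\mu_\varepsilon\circ f^{-1})(A)\right]\leqslant -I(f^{-1}(\bar A)) = -J(\bar A).
\end{displaymath}
The equalities $\inf_{x\in f^{-1}(B)}I(x) = \inf_{y\in B}J(y)$ used here follow directly from the definition of $J$ as an infimum over fibres.

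The step I expect to demand the most care is the attainment argument establishing the level-set identity for $J$, since the existence of a minimizer $x^*$ for each $y\in\{J\leqslant\lambda\}$ is what ultimately forces compactness of $\{J\leqslant\lambda\}$ and hence the goodness of $J$; the LDP bounds themselves are then an almost mechanical consequence of continuity of $f$.
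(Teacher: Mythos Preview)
Your argument is correct and is the standard proof of the contraction principle. Note, however, that the paper does not give its own proof of this proposition: immediately after the statement it simply writes ``That result is called contraction principle. The reader can refer to \cite{DZ98}, Lemma 4.1.6 for a proof.'' Your proof is essentially the one found in Dembo--Zeitouni (level-set identity $\{J\leqslant\lambda\} = f(\{I\leqslant\lambda\})$ via attainment of the infimum on fibres, then transfer of the LDP bounds through $f^{-1}$), so there is no methodological divergence to discuss.
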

\noindent
That result is called contraction principle. The reader can refer to \cite{DZ98}, Lemma 4.1.6 for a proof.
\\
\\
Consider the space $C^{0,\alpha}([0,T];\mathbb R)$ of functions $\varphi\in C^{\alpha\textrm{-H\"ol}}([0,T];\mathbb R)$ such \mbox{that :}
\begin{displaymath}
\lim_{\delta\rightarrow 0^+}
\omega_{\varphi}(\delta) = 0
\textrm{ with }
\omega_{\varphi}(\delta) =
\sup_{
\begin{tiny}
\begin{array}{rcl}
(s,t) & \in & \Delta_T\\
|t - s| & \leqslant & \delta
\end{array}
\end{tiny}}
\frac{|\varphi(t) -\varphi(s)|}{|t - s|^{\alpha}}
\end{displaymath}
for every $\delta > 0$.
\\
\\
In the sequel, $C^{0,\alpha}([0,T];\mathbb R)$ is equipped with $\|.\|_{\alpha\textrm{-H\"ol};T}$ and the Borel $\sigma$-field generated by open sets of the $\alpha$-H\"older topology. The same way, $C^0([0,T];\mathbb R)$ is equipped with $\|.\|_{\infty;T}$ and the Borel $\sigma$-field generated by open sets of the uniform topology.
\\
\\
Now, suppose that $W$ satisfies :
%


%
\begin{assumption}\label{WSass}
There exists $h > 0$ such that :
\begin{displaymath}
\forall\varepsilon > 0\textrm{, }
W_{\varepsilon .}
\stackrel{\mathcal D}{=}
\varepsilon^hW.
\end{displaymath}
Moreover, $\mathcal H_{W}^{1}\subset C^{0,\alpha}([0,T];\mathbb R)$ and $(C^{0,\alpha}([0,T];\mathbb R),\mathcal H_{W}^{1},\mathbb P)$ is an abstract Wiener space.
\end{assumption}
\noindent
\textbf{Remarks :}
\begin{enumerate}
 \item The notion of abstract Wiener space is defined and detailed in \mbox{M. Ledoux \cite{LED94}.}
 \item Typically, the fractional Brownian motion with Hurst parameter $H > 1/4$ satisfies Assumption \ref{WSass} (cf. \cite{SSTT07}, Proposition 4.1).
\end{enumerate}
Consider the stochastic differential equation :
\begin{equation}\label{EDSldp}
X_t =
x_0 +
\frac{1}{\delta}
\int_{0}^{t}
\left(a - bX_s\right)ds +
\frac{\sigma}{\delta^{h - 1}}
\int_{0}^{t}
X_{s}^{\beta}dW_s
\textrm{ ; }
t\in [0,T]
\end{equation}
\noindent
where, $x_0 > 0$ is a deterministic initial condition, $a,b,\sigma,\delta > 0$ and $\beta\in ]0,1]$ satisfies Assumption \ref{Eass}.
\\
\\
Under assumptions \ref{Eass} and $\ref{HYPW}$, by propositions \ref{MRres} and \ref{INTsol_eul}, equation (\ref{EDSldp}) admits a unique solution belonging to $L^p(\Omega)$ for every $p\geqslant 1$.
\\
\\
Moreover, under Assumption \ref{WSass}, by Proposition \ref{EDS3ss} :
\begin{equation}\label{EDSTRldp}
X_{\varepsilon t} =
x_0 +
\frac{\varepsilon}{\delta}
\int_{0}^{t}
\left(a - bX_{\varepsilon s}\right)ds +
\frac{\sigma\varepsilon^h}{\delta^{h - 1}}
\int_{0}^{t}
X_{\varepsilon s}^{\beta}dW_s
\end{equation}
for every $t\in [0,T]$ and $\varepsilon > 0$.
\\
\\
In the sequel, assume that $\delta =\varepsilon$. Then, $X_{\varepsilon .}$ satisfies :
\begin{displaymath}
X_{\varepsilon .} =
\tilde\pi_V\left(0,x_0;\varepsilon W\right)
\end{displaymath}
where, $V$ is the map defined on $\mathbb R_+$ by :
\begin{displaymath}
\forall x\in\mathbb R_+\textrm{, }
\forall t,w\in\mathbb R\textrm{, }
V(x).(t,w) =
(a - bx)t +
\sigma x^{\beta}w.
\end{displaymath}
Let show that $(X_{\varepsilon .},\varepsilon > 0)$ satisfies a large deviation principle :
%


%
\begin{proposition}\label{LDPx}
Consider $x_0 > 0$. Under assumptions \ref{Eass}, \ref{HYPW} and \ref{WSass}, for $a > 0$ and $b\geqslant 0$, $(X_{\varepsilon .},\varepsilon > 0)$ satisfies a large deviation principle on $C^0([0,T];\mathbb R)$ with good rate function $J : C^0([0,T];\mathbb R)\rightarrow [0,\infty]$ defined by :
\begin{displaymath}
\forall y\in C^0([0,T];\mathbb R)
\textrm{, }
J(y) =
\inf\left\{
I(w) ;
w\in C^{0,\alpha}([0,T];\mathbb R)
\textrm{ and }
y = \tilde\pi_V(0,x_0;w)
\right\}
\end{displaymath}
where,
\begin{displaymath}
I(w) =
\left\{
\begin{array}{rcl}
\frac{1}{2}\|w\|_{\mathcal H_{W}^{1}} & \textrm{if} & w\in\mathcal H_{W}^{1}\\
 \infty & \textrm{if} & w\not\in\mathcal H_{W}^{1}
\end{array}
\right.
\end{displaymath}
for every $w\in C^{0,\alpha}([0,T];\mathbb R)$.
\end{proposition}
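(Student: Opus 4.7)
The plan is to realize $(X_{\varepsilon\cdot},\varepsilon>0)$ as the pushforward of a Gaussian family under the (continuous) It\^o map, and then invoke the contraction principle. First, the self-similarity in Assumption \ref{WSass} together with Proposition \ref{EDS3ss} applied with $\delta=\varepsilon$ yields the distributional identity
\begin{displaymath}
X_{\varepsilon\cdot}\stackrel{\mathcal D}{=}\tilde\pi_V(0,x_0;\varepsilon W),
\end{displaymath}
so the problem reduces to propagating an LDP for $(\varepsilon W,\varepsilon>0)$ through the map $w\mapsto\tilde\pi_V(0,x_0;w)$.

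Second, I would invoke the generalized Schilder theorem on the abstract Wiener space $(C^{0,\alpha}([0,T];\mathbb R),\mathcal H_{W}^{1},\mathbb P)$ (this is where Assumption \ref{WSass} is crucial): the laws of $(\varepsilon W,\varepsilon>0)$ satisfy a large deviation principle on $C^{0,\alpha}([0,T];\mathbb R)$ equipped with the $\alpha$-H\"older topology, with good rate function $I$ as defined in the statement. This is the classical Schilder-type result for abstract Wiener spaces established by M.~Ledoux \cite{LED94}, which in particular requires that $\mathcal H_{W}^{1}$ embeds continuously into the Banach space $C^{0,\alpha}([0,T];\mathbb R)$ and that $W$ induces a genuine Gaussian measure on that space.

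Third, by Proposition \ref{MRcont} the It\^o map $w\mapsto\tilde\pi_V(0,x_0;w)$ is continuous from $C^{\alpha\textrm{-H\"ol}}([0,T];\mathbb R)$ into $C^0([0,T];\mathbb R)$, and therefore its restriction to the closed subspace $C^{0,\alpha}([0,T];\mathbb R)$ (equipped with the same norm $\|\cdot\|_{\alpha\textrm{-H\"ol};T}$) is still continuous into $C^0([0,T];\mathbb R)$. The contraction principle (Proposition \ref{CPrinciple}) then transfers the LDP from $(\varepsilon W,\varepsilon>0)$ to $(\tilde\pi_V(0,x_0;\varepsilon W),\varepsilon>0)$ on $C^0([0,T];\mathbb R)$, and the induced good rate function is exactly the $J$ defined in the statement as the infimum of $I$ over preimages.

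The main obstacle is the first step: verifying that Assumption \ref{WSass} truly delivers a Schilder-type LDP in the $\alpha$-H\"older topology rather than only in the weaker uniform topology. This needs exponential tightness of $(\varepsilon W)$ with respect to $\|\cdot\|_{\alpha\textrm{-H\"ol};T}$, which is precisely what the abstract Wiener space structure of $(C^{0,\alpha}([0,T];\mathbb R),\mathcal H_{W}^{1},\mathbb P)$ encodes. Once this LDP in the stronger topology is granted, the remaining steps are routine, since continuity of the It\^o map was already established in Proposition \ref{MRcont} and the contraction principle has no additional hypotheses beyond continuity and the Hausdorff property of the target, both of which are satisfied here.
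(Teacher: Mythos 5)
Your proof is correct and follows essentially the same route as the paper: reduce $X_{\varepsilon\cdot}$ to $\tilde\pi_V(0,x_0;\varepsilon W)$ via Proposition \ref{EDS3ss}, invoke Ledoux's Schilder-type LDP on the abstract Wiener space $(C^{0,\alpha}([0,T];\mathbb R),\mathcal H_{W}^{1},\mathbb P)$ (this is \cite{LED94}, Theorem 4.5), combine with the continuity of the It\^o map from Proposition \ref{MRcont}, and apply the contraction principle (Proposition \ref{CPrinciple}). Your remark that the LDP must hold in the $\alpha$-H\"older topology for the contraction principle to mesh with Proposition \ref{MRcont} is exactly the role of Assumption \ref{WSass} in the paper's argument.
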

%


%
\begin{proof}
Since $C^{0,\alpha}([0,T];\mathbb R)\subset C^{\alpha\textrm{-H\"ol}}([0,T];\mathbb R)$ by construction, Proposition \ref{MRcont} implies that $\tilde\pi_V(0,x_0;.)$ is continuous from
\begin{displaymath}
C^{0,\alpha}([0,T];\mathbb R)
\textrm{ into }
C^0([0,T];\mathbb R).
\end{displaymath}
On the other hand, under Assumption \ref{WSass}, by M. Ledoux \cite{LED94}, Theorem 4.5 ; $(\varepsilon W,\varepsilon > 0)$ satisfies a large deviation principle on $C^{0,\alpha}([0,T];\mathbb R)$ with good rate function $I$.
\\
\\
Therefore, since $X_{\varepsilon .} =\tilde\pi_V(0,x_0;\varepsilon W)$ for every $\varepsilon > 0$, by the contraction principle (Proposition \ref{CPrinciple}), $(X_{\varepsilon .},\varepsilon > 0)$ satisfies a large deviation principle on $C^0([0,T];\mathbb R)$ with good rate function $J$.
\end{proof}
%


%
\subsection{Density with respect to Lebesgue's measure for the solution}
Via Bouleau-Hirsch's method, this subsection is devoted to show that $\tilde\pi_V(0,x_0;W)_t$ admits a density with respect to Lebesgue's measure on $(\mathbb R,\mathcal B(\mathbb R))$ for every $t\in ]0,T]$ and every $x_0 > 0$.
\\
\\
\textbf{Notation.} For two normed vector spaces $E$ and $F$, the embedment of $E$ in $F$ is denoted by $E\hookrightarrow F$.
\\
\\
Throughout this subsection, assume that $W$ satisfies :
%


%
\begin{assumption}\label{DSNass}
Cameron-Martin's space of $W$ satisfies :
\begin{displaymath}
C_{0}^{\infty}([0,T];\mathbb R)\subset
\mathcal H_{W}^{1}\hookrightarrow
C^{\alpha\textrm{-H\"ol}}([0,T];\mathbb R).
\end{displaymath}
\end{assumption}
\noindent
\textbf{Example.} A fractional Brownian motion with Hurst parameter $H > 1/4$ satisfies Assumption \ref{DSNass}.
%


%
\begin{proposition}\label{BHmre}
Under assumptions \ref{Eass}, \ref{HYPW} and \ref{DSNass}, for $a > 0$, $b\geqslant 0$ and any $t\in ]0,T]$, $\tilde\pi_V(0,x_0;W)_t$ admits a density with respect to Lebesgue's measure on $(\mathbb R,\mathcal B(\mathbb R))$.
\end{proposition}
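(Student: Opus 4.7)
Fix $t\in ]0,T]$ and set $F=\tilde\pi_V(0,x_0;W)_t$. The plan is to invoke Proposition \ref{BHcondition}: it will suffice to verify that $F$ is continuously $\mathcal H_W^1$-differentiable and to exhibit a single direction $h\in\mathcal H_W^1$, $h\not= 0$, for which $|D_hF|>0$ $\mathbb P$-almost surely.

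For the first point I would rely on Proposition \ref{ITOdiff}. Assumption \ref{DSNass} provides the continuous embedment $\mathcal H_W^1\hookrightarrow C^{\alpha\textrm{-H\"ol}}([0,T];\mathbb R)$, so for $\mathbb P$-a.e.\ $\omega$ the map $h\mapsto\tilde\pi_V(0,x_0;\omega+h)_t$ is the composition of a continuous linear injection with the evaluation at $t$ of the It\^o map; Proposition \ref{ITOdiff} then yields that it is continuously Fr\'echet-differentiable from $\mathcal H_W^1$ into $\mathbb R$.

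To obtain an explicit formula for $D_hF$ I would use the factorisation $\tilde\pi_V(0,x_0;W)=e^{-b.}Y^{\gamma+1}$ from Lemma \ref{EDS3change} and Proposition \ref{MRres}, with $Y$ the solution of (\ref{EDS4}) for $Y_0=x_0^{1-\beta}$ and driving Young integral $\tilde W=\mathcal Y(\vartheta,W)$. Restricting to $h\in C_0^{\infty}([0,T];\mathbb R)\subset\mathcal H_W^1$ (Assumption \ref{DSNass}) makes $\tilde h:=\mathcal Y(\vartheta,h)$ a classical $C^1$ function with $\tilde h'_s=\vartheta_s h'(s)$, so the rough integral reduces to a Lebesgue integral. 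Differentiating (\ref{EDS4}) in the direction $h$ at $\varepsilon=0$ gives the linear ODE
\begin{displaymath}
Z'_t+a(1-\beta)\gamma Y_t^{-\gamma-1}e^{bt}Z_t=\vartheta_th'(t),\qquad Z_0=0,
\end{displaymath}
whose variation-of-constants solution is
\begin{displaymath}
Z_t=\int_{0}^{t}\exp\left(-a(1-\beta)\gamma\int_{s}^{t}Y_u^{-\gamma-1}e^{bu}du\right)\vartheta_sh'(s)ds.
\end{displaymath}
The chain rule then yields $D_hF=(\gamma+1)e^{-bt}Y_t^{\gamma}Z_t$, and $Y_t>0$ $\mathbb P$-a.s.\ by Proposition \ref{MRres}.

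It remains to exhibit a witness $h$. Picking $h\in C_0^{\infty}([0,T];\mathbb R)$ with $h'\geqslant 0$ on $[0,t]$ and $h'>0$ on a non-trivial subinterval of $(0,t)$ makes the integrand in the expression of $Z_t$ pathwise nonnegative and strictly positive on a set of positive Lebesgue measure, so $Z_t>0$ for every outcome along which $Y$ stays in $\mathbb R_{+}^{*}$, hence $\mathbb P$-a.s. Combined with $Y_t>0$ a.s., this gives $D_hF>0$ almost surely, and Proposition \ref{BHcondition} concludes. The main obstacle I anticipate is the rigorous justification of the linearisation of (\ref{EDS4}) directly from rough-path differentiation of the It\^o map; however, by restricting to smooth $h$ at the outset, $\tilde h$ becomes $C^1$ and the linearisation is merely a classical ODE computation, so the Bouleau-Hirsch witness can be produced without any rough-path Gateaux calculus.
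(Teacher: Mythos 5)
Your proof is correct and follows essentially the paper's route: verify continuous $\mathcal H_W^1$-differentiability via Proposition \ref{ITOdiff}, linearize the transformed one-dimensional equation, obtain a variation-of-constants formula for $D_hF$ with a pointwise positive integrand, and exhibit a single direction $h$ meeting Bouleau-Hirsch's criterion. Two small advantages of your version over the paper's: you pick the witness $h\in C_0^\infty([0,T];\mathbb R)$, which Assumption \ref{DSNass} places inside $\mathcal H_W^1$ verbatim (whereas the paper's $h:=\textrm{Id}_{[0,T]}$ is not compactly supported, so it is not literally covered by the stated inclusion $C_0^\infty([0,T];\mathbb R)\subset\mathcal H_W^1$, even though it does lie in $\mathcal H_W^1$ in the fBm example), and your variation-of-constants formula correctly carries $h'$ in the integrand, whereas the paper's displayed formula writes $h_s$ where it should read $\dot h_s$ --- a typo which, however, does not affect the positivity conclusion for the paper's choice of $h$.
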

%


%
\begin{proof}
With notations of Proposition \ref{ITOdiff}, by Proposition \ref{BHcondition} and the transfer theorem, it is sufficient to show that $\omega\in\Omega\mapsto z_t[z_0,W(\omega)]$ satisfies Bouleau-Hirsch's condition for any $t\in ]0,T]$.
\\
\\
On one hand, by Proposition \ref{ITOdiff} (cf. Proof), $z(z_0,.)$ is continuously differentiable from $C^{\alpha\textrm{-H\"ol}}([0,T];\mathbb R)$ into $C^0([0,T];\mathbb R)$. Then, $z(z_0,.)$ is continuously differentiable on
\begin{displaymath}
\mathcal H_{W}^{1}
\hookrightarrow
C^{\alpha\textrm{-H\"ol}}([0,T];\mathbb R)
\subset
C^0([0,T];\mathbb R).
\end{displaymath}
By P. Friz and N. Victoir \cite{FV08}, Lemma 15.58, for almost every $\omega\in\Omega$,
\begin{displaymath}
\forall h\in\mathcal H_{W}^{1}
\textrm{, }
W(\omega + h) =
W(\omega) + h.
\end{displaymath}
Therefore, almost surely :
\begin{displaymath}
z\left[x_0;W(. + h)\right] =
z\left[x_0;W(.) + h\right],
\end{displaymath}
and $z(x_0,W)$ is continuously $\mathcal H_{W}^{1}$-differentiable.
\\
\\
On the other hand, by Proposition \ref{ITOdiff}, for every $h\in\mathcal H_{W}^{1}$,
\begin{eqnarray*}
 D_hz_t(z_0,W) & = &
 \sigma(1-\beta)h_t +
 \int_{0}^{t}
 \dot F\left[z_s(z_0,W)\right]D_hz_s(z_0,W)ds\\
 & = &
 \sigma(1-\beta)\int_{0}^{t}h_s\exp\left[
 \int_{s}^{t}\dot F\left[z_u(z_0,W)\right]du
 \right]ds.
\end{eqnarray*}
In particular, $D_hz_t(z_0,W) > 0$ for $h := \textrm{Id}_{[0,T]}\in\mathcal H_{W}^{1}$.
\\
\\
In conclusion, by Proposition \ref{BHcondition}, for every $t\in ]0,T]$, $z_t(z_0,W)$ and then $\tilde\pi_V(0,x_0;W)_t$, admit a density with respect to Lebesgue's measure on $(\mathbb R,\mathcal B(\mathbb R))$ respectively.
\end{proof}
%


%
\section{A generalized mean-reverting pharmacokinetic model}
\noindent
We study a pharmacokinetic model based on a particular generalized mean-reverting equation (inspired by K. Kalogeropoulos et al. \cite{KDP08}).
\\
\\
In order to study the absorption/elimination processes of a given drug, the following deterministic mono-compartment model is classically used :
\begin{equation}\label{monocp}
C_t =
\int_{0}^{t}
\left(\frac{A_0K_a}{v}e^{-K_as} - K_eC_s\right)ds
\textrm{ ; }
t\in [0,T]
\end{equation}
where :
\begin{itemize}
 \item $A_0 > 0$ is the dose administered to the patient at initial time.
 \item $v > 0$ is the volume of the elimination compartment $E$ (extra-vascular tissues).
 \item $K_a\geqslant 0$ is the rate of absorption in compartment $A$. If the drug is administered by rapid injection, an IV bolus injection, it is natural to take $K_a = 0$.
 \item $K_e > 0$ is the rate of elimination in compartment $E$, describing removal of the drug by all elimination processes including excretion and metabolism.
 \item $C_t$ is the concentration of the drug in compartment $E$ at time $t\in [0,T]$.
\end{itemize}
\textbf{Remark.} About deterministic pharmacokinetic models, the reader can refer to Y. Jacomet \cite{JAC89} and N. Simon \cite{SIM06}.
\\
\\
Recently, in order to modelize perturbations during the elimination processes, stochastic generalizations of (\ref{monocp}) has been studied :
\begin{displaymath}
C_t = \int_{0}^{t}\left(\frac{A_0K_a}{v}e^{-K_a s} - K_eC_s\right)ds +
\int_{0}^{t}\sigma\left(s,C_s\right)dB_s
\textrm{ ; }
t\in [0,T]
\end{displaymath}
where, $B$ is a standard Brownian motion and the stochastic integral is taken in the sense of It\^o. For example, in K. Kalogeropoulos et al. \cite{KDP08} :
\begin{displaymath}
C_t = \int_{0}^{t}\left(\frac{A_0K_a}{v}e^{-K_a s} - K_eC_s\right)ds +
\sigma\int_{0}^{t}C_{s}^{\beta}dB_s
\textrm{ ; }
t\in [0,T]
\end{displaymath}
with $\sigma > 0$ and $\beta\in [0,1]$.
\\
\\
However, these models aren't realistic (cf. M. Delattre and M. Lavielle \cite{DL11}), because the obtained process $C$ is too rough.
\\
\\
Since probabilistic properties of It\^o's integral aren't particularly interesting in that situation, if the drug is administered by rapid injection, $C$ could be the solution of equation (\ref{EDS3}) with $C_0 = A_0/v$, $a = 0$ and $b = K_e$.
\\
\\
In order to bypass the difficulty of the standard Brownian motion's paths roughness, one can take a Gaussian process $W$ satisfying Assumption \ref{HYPW} with $\alpha$ close to $1$. Typically, a fractional Brownian motion $B^H$ with a high Hurst parameter $H$ (cf. simulations below).
\\
\\
Precisely :
\begin{equation}\label{MRmonocp}
C_t = \frac{A_0}{v} - K_e\int_{0}^{t}C_sds +
\sigma\int_{0}^{t}C_{s}^{\beta}dW_s
\end{equation}
where the stochastic integral is taken pathwise, in the sense of Young. Moreover, since $a = 0$, we shown at Section 3 that until it hits zero, the solution of equation (\ref{MRmonocp}) is matching with the process $X$ defined by :
\begin{displaymath}
\forall t\in\mathbb R_+\textrm{, }
X_t =
\left|
\left(\frac{A_0}{v}\right)^{1-\beta} + \tilde W_t\right|^{\gamma + 1}e^{-K_et}
\textrm{ with }
\tilde W_t =
\sigma(1-\beta)
\int_{0}^{t}
e^{K_e(1-\beta)s}dW_s.
\end{displaymath}
It is natural to assume that when the concentration hits 0, the elimination process stops. Then, we put $C = X\mathbf 1_{[0,\tau_{0}^{1}\wedge T[}$ where $T > 0$ is a deterministic fixed time.
\\
\\
For example, let simulate that model with $A_0 = v$, $K_e = 4$, $\sigma = 1$, $\beta = 0.8$ and a fractional Brownian motion $B^H$ with Hurst parameter $H\in\{0.6,0.9\}$ :
\begin{figure}[H]
\centering
\includegraphics[scale = 0.5]{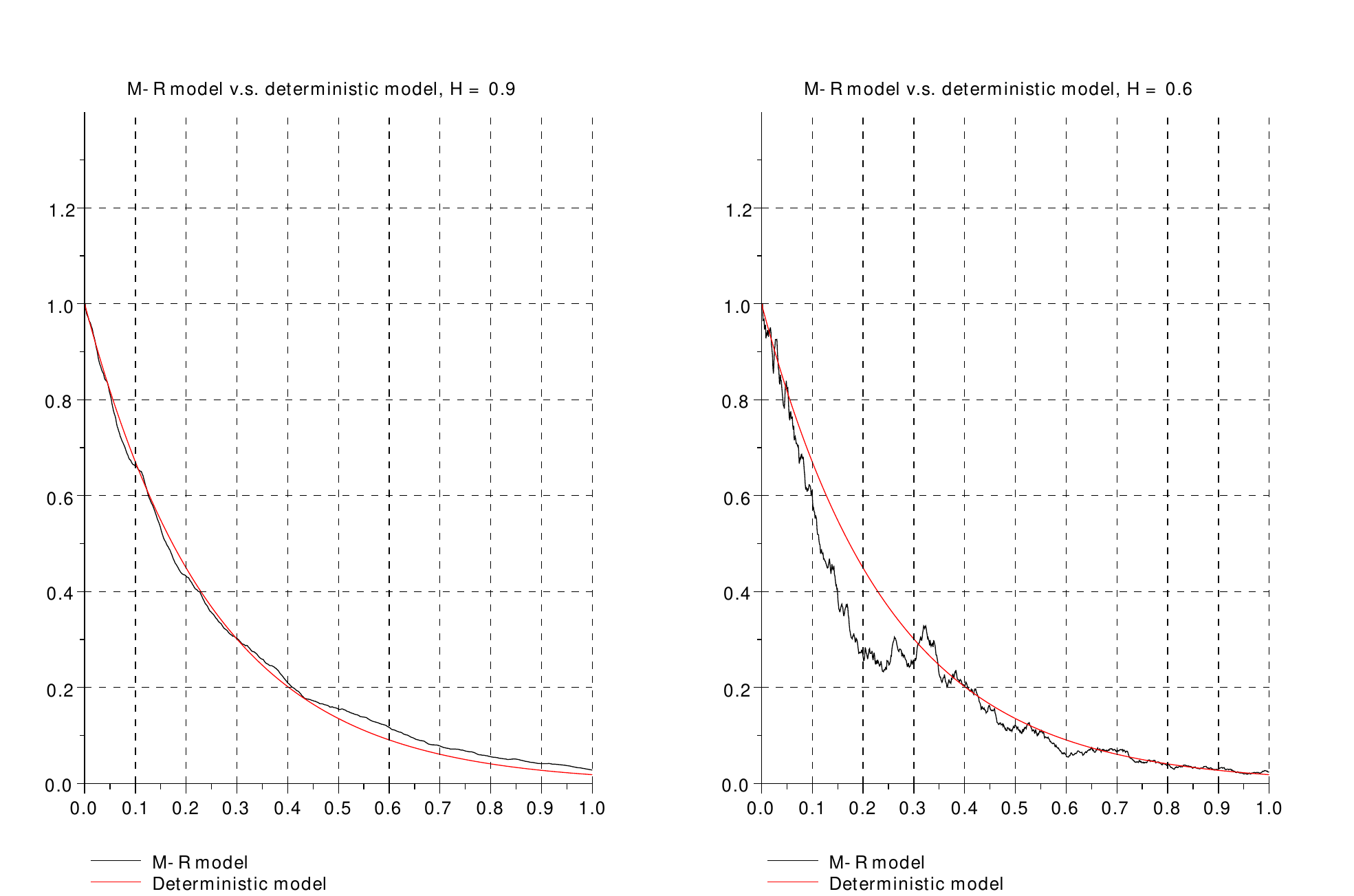}
\caption{GM-R model v.s. deterministic model currently used}
\end{figure}
\noindent
On one hand, remark that the stochastic model (black) keeps the trend of the deterministic model (red). On the other hand, remark that when the Hurst parameter is relatively close to $1$ ($H = 0.9$), perturbations in biological processes are taken in account by $C$, but more realistically than for $H = 0.6$.
\\
\\
In the sequel, we also consider the process $Z = X^{1-\beta}$. Its covariance function is denoted by $c_Z$.
\\
\\
For clinical applications, parameters $K_e$, $\sigma$ and $\beta$ have to be estimated. Consider a dissection $(t_0,\dots,t_n)$ of $[0,T]$ for $n\in\mathbb N^*$. We also put $x_i = X_{t_i}$ and $z_i = Z_{t_i}$ for $i = 0,\dots,n$. The following proposition provides the likelihood function of $(x_1,\dots,x_n)$ which can be approximatively maximized with respect to the parameter $\theta = (K_e,\sigma,\beta)$ by various numerical methods (not studied in this \mbox{paper) :}
%


%
\begin{proposition}\label{MLfunction}
Under assumptions \ref{Eass} and \ref{HYPW}, the likelihood function of $(x_1,\dots, x_n)$ is given by :
\begin{displaymath}
L(\theta ; x_1,\dots,x_n) =
\frac{2^n(1-\beta)^n\mathbf 1_{x_1 > 0,\dots, x_n > 0}}
{(2\pi)^{n/2}\sqrt{\left|\det\left[\Gamma(\theta)\right]\right|}}
\exp\left[-\frac{1}{2}\langle\Gamma^{-1}(\theta)U_{n}^{x},U_{n}^{x}\rangle\right]
\prod_{i = 1}^{n}
x_{i}^{-\beta}
\end{displaymath}
where, $\sigma^2(\theta) = \textrm{Var}(z_1,\dots,z_n)$,
\begin{displaymath}
\Gamma(\theta) =
\begin{bmatrix}
\sigma_{1}^{2}(\theta) & \dots & c_Z(t_1,t_n)\\
\vdots & \ddots & \vdots\\
c_Z(t_n,t_1) & \dots & \sigma_{n}^{2}(\theta)
\end{bmatrix}
\textrm{ and }
U_{n}^{x} =
\begin{pmatrix}
x_{1}^{1-\beta} - C_{0}^{1-\beta}e^{-K_e(1-\beta)t_1}\\
\vdots\\
x_{n}^{1-\beta} - C_{0}^{1-\beta}e^{-K_e(1-\beta)t_n}
\end{pmatrix}.
\end{displaymath}
\end{proposition}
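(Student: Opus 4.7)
The plan is to derive the joint density of $(X_{t_1},\dots,X_{t_n})$ by pushing forward the joint law of the Gaussian vector that drives $X$. The cornerstone is the explicit form of $X$ when $a = 0$, furnished by the first remark following Proposition~\ref{MRres}: before the hitting time $\tau_{0}^{1}$ one has $X_t = |C_0^{1-\beta} + \tilde W_t|^{\gamma + 1}e^{-K_et}$, where $\tilde W_t = \sigma(1-\beta)\int_0^t e^{K_e(1-\beta)s}\,dW_s$. On the event $\{x_1 > 0,\dots,x_n > 0\}$ every observation time $t_i$ precedes $\tau_{0}^{1}$, so $C_0^{1-\beta} + \tilde W_{t_i}$ has a definite sign and $z_i := x_i^{1-\beta} = (C_0^{1-\beta} + \tilde W_{t_i})e^{-K_e(1-\beta)t_i}$ on the positive branch.

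The first step is to establish joint Gaussianity of $(\tilde W_{t_1},\dots,\tilde W_{t_n})$. Since $\vartheta_s = \sigma(1-\beta)e^{K_e(1-\beta)s}$ is smooth and $W$ satisfies Assumption~\ref{HYPW}, each Young integral $\tilde W_{t_i} = \int_0^{t_i}\vartheta_s\,dW_s$ lies in the $L^2$-closure of $\mathrm{span}\{W_s;\,s\in[0,T]\}$, which consists of centered Gaussian variables; linearity then yields joint Gaussianity of the $n$-tuple. Hence, on the observation event, $(z_1,\dots,z_n)$ is a Gaussian vector with mean $\mu_i = C_0^{1-\beta}e^{-K_e(1-\beta)t_i}$ and covariance matrix $\Gamma(\theta)$ whose $(i,j)$-entry equals $c_Z(t_i,t_j) = e^{-K_e(1-\beta)(t_i+t_j)}\mathbb{E}(\tilde W_{t_i}\tilde W_{t_j})$, with diagonal entries $\sigma_i^2(\theta) = \mathrm{Var}(z_i)$. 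The multivariate Gaussian density at $(z_1,\dots,z_n)$ then produces the factor $(2\pi)^{-n/2}|\det\Gamma(\theta)|^{-1/2}\exp(-\tfrac12\langle\Gamma^{-1}(\theta)U_n^x,U_n^x\rangle)$, since $z_i - \mu_i$ coincides with the $i$-th entry of $U_n^x$ by construction.

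The density of $(x_1,\dots,x_n)$ is then obtained by the coordinatewise change of variables $x_i\mapsto z_i = x_i^{1-\beta}$, a $C^{\infty}$-diffeomorphism of $\mathbb{R}_{+}^{*}$ onto itself with derivative $(1-\beta)x_i^{-\beta}$, contributing the Jacobian $\prod_{i=1}^n(1-\beta)x_i^{-\beta}$. The residual factor $2^n$ should come from the two-to-one structure of $y\mapsto|y|^{\gamma+1}$ inherent in the absolute value of the explicit formula: for each positive $x_i$ there are two admissible signs $\pm x_i^{1-\beta}e^{K_e(1-\beta)t_i}$ for $C_0^{1-\beta}+\tilde W_{t_i}$, and folding them onto the positive branch used above doubles the contribution at every coordinate. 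The main difficulty I foresee is precisely the bookkeeping of this folding: strict summation of the Gaussian density over the $2^n$ sign branches does not literally reduce to $2^n$ times the single quadratic form at $(+,\dots,+)$, because $\tilde W$ is not symmetric about $-C_0^{1-\beta}$, so the $2^n$ must be interpreted as a multiplicity attached to the folded structure of $|C_0^{1-\beta}+\tilde W_{t_i}|$ on the dominant positive branch; once this convention is fixed, combining the Gaussian factor, the Jacobian and the folding multiplicity delivers the stated likelihood on $\{x_1>0,\dots,x_n>0\}$ and zero elsewhere.
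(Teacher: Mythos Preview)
Your approach coincides with the paper's: both obtain the density by pushing the joint Gaussian law forward through the coordinatewise map $a\mapsto |a|^{\gamma+1}$ and then reading off the Jacobian. Concretely, the paper writes
\[
\mathbb E[\varphi(x_1,\dots,x_n)] = \mathbb E[\varphi(|z_1|^{\gamma+1},\dots,|z_n|^{\gamma+1})]
= 2^n\int_{\mathbb R_+^n}\varphi(a_1^{\gamma+1},\dots,a_n^{\gamma+1})\,f_{1,\dots,n}(\theta;a)\,da
\]
``by reduction to canonical form of quadratic forms'', and then performs the substitution $u_i=a_i^{\gamma+1}$, producing the factor $(1-\beta)^n\prod_i u_i^{-\beta}$ exactly as you do.

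The difficulty you isolate with the $2^n$ is precisely the weak point of the paper's own argument. The paper justifies the folding by declaring that $(z_1,\dots,z_n)$ is a \emph{centered} Gaussian vector; but with $Z_t=X_t^{1-\beta}$ one has, on the positive branch, $z_i=(C_0^{1-\beta}+\tilde W_{t_i})e^{-K_e(1-\beta)t_i}$, whose mean $C_0^{1-\beta}e^{-K_e(1-\beta)t_i}$ is nonzero and indeed reappears in $U_n^x$ in the stated formula, so the proof is internally inconsistent on this point. Even granting centeredness, the density of a non-diagonal Gaussian is symmetric under $a\mapsto -a$ but not under the individual sign flips $a_i\mapsto -a_i$, and the diagonalising linear change of variables invoked by ``canonical form'' does not commute with the coordinatewise map $a\mapsto(|a_1|^{\gamma+1},\dots,|a_n|^{\gamma+1})$; hence the passage from $\mathbb R^n$ to $2^n$ copies on $\mathbb R_+^n$ is not justified by that device either. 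The honest density is a sum over the $2^n$ sign patterns of the shifted quadratic form, which reduces to $2^n$ times the positive-branch term only in degenerate situations (zero mean and diagonal $\Gamma$). In short, you have reproduced the paper's derivation and correctly located a gap that the paper asserts rather than resolves.
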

%


%
\begin{proof}
Since $\tilde W$ is a centered Gaussian process as a Wiener integral against $W$ ; $(z_1,\dots,z_n)$ is a centered Gaussian vector with covariance matrix $\Gamma(\theta)$. We denote by $f_{1,\dots,n}(\theta ;.)$ the natural density of $(z_1,\dots,z_n)$ with respect to Lebesgue's measure on $(\mathbb R^n,\mathcal B(\mathbb R^n))$.
\\
\\
Consider an arbitrary Borel bounded map $\varphi :\mathbb R^n\rightarrow\mathbb R$. By the transfer theorem :
\begin{eqnarray*}
 \mathbb E[\varphi(x_1,\dots,x_n)] & = &
 \mathbb E[\varphi(|z_1|^{\gamma + 1},\dots,|z_{n}^{\gamma + 1}|)]\\
 & = &
 2^n\int_{\mathbb R_{+}^{n}}
 \varphi(a_{1}^{\gamma + 1},\dots,a_{n}^{\gamma + 1})f_{1,\dots,n}(\theta; a_1,\dots,a_n)da_1\dots da_n
\end{eqnarray*}
by reduction to canonical form of quadratic forms.
\\
\\
Put $u_i = a_{i}^{\gamma + 1}$ for $a_i\in\mathbb R_{+}^{*}$ and $i = 1,\dots,n$. Then,
\begin{displaymath}
(a_1,\dots, a_n) = 
(u_{1}^{\frac{1}{\gamma + 1}},\dots,
u_{n}^{\frac{1}{\gamma + 1}})
\textrm{ and }
\left|J(u_1,\dots,u_n)\right| =
\frac{1}{(\gamma + 1)^n}
\prod_{i = 1}^{n}
u_{i}^{-\frac{\gamma}{\gamma + 1}}
\end{displaymath}
where, $J(u_1,\dots,u_n)$ denotes the Jacobian of :
\begin{displaymath}
(u_1,\dots,u_n)\in (\mathbb R_{+}^{*})^n
\longmapsto
(u_{1}^{\frac{1}{\gamma + 1}},\dots,
u_{n}^{\frac{1}{\gamma + 1}}).
\end{displaymath}
By applying that change of variable :
\begin{eqnarray*}
 \mathbb E[\varphi(x_1,\dots,x_n)] & = &
 \frac{2^n}{(\gamma + 1)^n}
 \int_{\mathbb R_{+}^{n}}du_1\dots du_n
 \varphi(u_1,\dots,u_n)\times\\
 & &
 f_{1,\dots,n}(\theta; u_{1}^{\frac{1}{\gamma + 1}},\dots,u_{n}^{\frac{1}{\gamma + 1}})
 \prod_{i = 1}^{n}
 u_{i}^{-\frac{\gamma}{\gamma + 1}}.
\end{eqnarray*}
Therefore, $\mathbb P_{(x_1,\dots,x_n)}(\theta ; du_1,\dots,du_n) = L(\theta; u_1,\dots,u_n)du_1\dots du_n$ with :
\begin{eqnarray*}
 L(\theta; u_1,\dots,u_n) & = &
 \frac{2^n}{(\gamma + 1)^n}
 f_{1,\dots,n}(\theta; u_{1}^{\frac{1}{\gamma + 1}},\dots,u_{n}^{\frac{1}{\gamma + 1}})
 \prod_{i = 1}^{n}
 u_{i}^{-\frac{\gamma}{\gamma + 1}}
 \mathbf 1_{u_1 > 0,\dots, u_n > 0}\\
 & = &
 \frac{2^n(1 -\beta)^n\mathbf 1_{u_1 > 0,\dots, u_n > 0}}
 {(2\pi)^{n/2}\sqrt{\left|\det\left[\Gamma(\theta)\right]\right|}}
 \exp\left[-\frac{1}{2}\langle\Gamma^{-1}(\theta)U_{n}^{u},U_{n}^{u}\rangle\right]
 \prod_{i = 1}^{n}
 u_{i}^{-\beta}.
\end{eqnarray*}
\end{proof}
\noindent
Finally, consider a random time $\tau\in [0,\tau_{0}^{1}\wedge T]$ and a deterministic function $F :\mathbb R_+\rightarrow\mathbb R$ satisfying the following assumption :
%


%
\begin{assumption}\label{Fhyp}
The function $F$ belongs to $C^1(\mathbb R_+;\mathbb R)$ and there exists $(K,N)\in\mathbb R_{+}^{*}\times\mathbb N^*$ such that :
\begin{displaymath}
\forall r\in\mathbb R_+\textrm{, }
|F(r)|\leqslant
K(1 + r)^N\textrm{ and }
|\dot F(r)|\leqslant
K(1 + r)^N.
\end{displaymath}
\end{assumption}
\noindent
Let show the existence and compute the sensitivity of $f_{\tau}(x) =\mathbb E[F(C_{\tau}^{x})]$ to variations of the initial concentration $x > 0$ in compartment $E$.
%


%
\begin{proposition}\label{PLsin}
Under assumptions \ref{Eass}, \ref{HYPW} and \ref{Fhyp}, the function $f_{\tau}$ is differentiable on $\mathbb R_{+}^{*}$ and,
\begin{displaymath}
\forall x > 0\textrm{, }
\dot f_{\tau}(x) =
x^{-\beta}
\mathbb E\left[e^{-K_e\tau}
\dot F(C_{\tau}^{x})(x^{1-\beta} + \tilde W_{\tau})^{\gamma}\right].
\end{displaymath}
\end{proposition}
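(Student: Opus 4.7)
The plan is to exploit the explicit pathwise representation available in the pharmacokinetic setting $a = 0$, $b = K_e$ and then justify a Leibniz rule under the expectation via dominated convergence.

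First, the first remark following Proposition \ref{MRres}, applied pathwise, yields, for every $\tau \in [0, \tau_0^1 \wedge T]$,
\[ C_\tau^x = \bigl(x^{1-\beta} + \tilde W_\tau\bigr)^{\gamma+1} e^{-K_e \tau}. \]
Since $\gamma = \beta/(1-\beta)$ gives $(\gamma+1)(1-\beta) = 1$, this expression is pathwise differentiable in $x > 0$ with
\[ \frac{\partial}{\partial x} C_\tau^x = x^{-\beta} \bigl(x^{1-\beta} + \tilde W_\tau\bigr)^\gamma e^{-K_e \tau}. \]
Applying the chain rule, together with the $C^1$ hypothesis on $F$ from Assumption \ref{Fhyp}, produces pointwise (in $\omega$) precisely the integrand claimed in the formula for $\dot f_\tau(x)$.

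It remains to justify the interchange of $\partial_x$ and the expectation. I would fix an arbitrary $x_0 > 0$, localize to a compact neighborhood $[x_0 - \delta, x_0 + \delta] \subset \mathbb R_+^*$, and apply the dominated convergence theorem to the difference quotients $h^{-1}[F(C_\tau^{x_0 + h}) - F(C_\tau^{x_0})]$ as $h \to 0$. By the mean value theorem applied to $F$, combined with the polynomial growth estimates of Assumption \ref{Fhyp} and the explicit formula for $\partial_x C_\tau^{x'}$ above, these quotients are dominated by a random variable of the form
\[ K\bigl(1 + \sup_{x' \in [x_0 - \delta, x_0 + \delta]} \|C^{x'}\|_{\infty;T}\bigr)^N \cdot (x_0 - \delta)^{-\beta}\bigl((x_0 + \delta)^{1-\beta} + \|\tilde W\|_{\infty;T}\bigr)^\gamma. \]
This random variable belongs to every $L^p(\Omega)$: the factor involving $\|C^{x'}\|_{\infty;T}$ is controlled uniformly in $x'$ thanks to the explicit upper bound of Proposition \ref{MRint} (which is continuous and monotone in $x_0$) together with Proposition \ref{INTsol_eul}, and the moments of $\|\tilde W\|_{\infty;T}$ are finite by Fernique's theorem.

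The main obstacle is a slight subtlety coming from the fact that the hitting time $\tau_0^1$ itself depends on $x$, so one must check that $\tau \leqslant \tau_0^1(x_0 + h) \wedge T$ still holds for small $h$ off a $\mathbb P$-null set (so that the explicit formula for $C_\tau^{x_0+h}$ remains valid). This follows from continuity of the hitting level in the initial condition and the fact that $\{\tau = \tau_0^1(x_0)\}$ is the only pathological event, on which $C_\tau^{x_0} = 0$ and continuity of $F$ suffices to treat the boundary contribution separately. Once these points are checked, the dominated convergence argument closes the proof and yields the announced formula for every $x_0 > 0$.
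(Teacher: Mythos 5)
Your proof follows essentially the same route as the paper's: the explicit pathwise representation $C_\tau^x=(x^{1-\beta}+\tilde W_\tau)^{\gamma+1}e^{-K_e\tau}$, pathwise differentiation in $x$, the polynomial growth bounds of Assumption \ref{Fhyp} (via Taylor's formula/the mean value theorem), Fernique's theorem for moments, and dominated convergence. Two small points worth noting: the $L^p$ domination should be read directly off the explicit formula, namely $\|C^{x'}\|_{\infty;T}\leqslant\left((x'+1)^{1-\beta}+\|\tilde W\|_{\infty;T}\right)^{\gamma+1}$, rather than through Propositions \ref{MRint} and \ref{INTsol_eul}, which are stated and proved for $a>0$ while the pharmacokinetic model here has $a=0$; and the concern about $\tau_0^1(x)$ depending on $x$ is in fact harmless, since $x\mapsto C_\tau^x$ is globally $C^1$ on $\mathbb R_+^*$ pathwise (the claimed derivative vanishes continuously across the transition $x^{1-\beta}+\tilde W_\tau=0$ because $\gamma>0$), which is what the paper uses implicitly without further comment.
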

%


%
\begin{proof}
First of all, the function $x\in\mathbb R_{+}^{*}\mapsto C_{\tau}^{x}$ is almost surely $C^1$ on $\mathbb R_{+}^{*}$ and,
\begin{displaymath}
\forall x > 0\textrm{, }
\partial_x C_{\tau}^{x} =
x^{-\beta}
\left(x^{1-\beta} +\tilde W_{\tau}\right)^{\gamma}e^{-K_e\tau}.
\end{displaymath}
Consider $x > 0$ and $\varepsilon\in ]0,1]$.
\\
\\
On one hand, since $F$ belongs to $C^1(\mathbb R_+;\mathbb R)$, from Taylor's formula :
\begin{eqnarray*}
 \left|\frac{F(C_{\tau}^{x + \varepsilon}) - F(C_{\tau}^{x})}{\varepsilon}\right| & = &
 \left|\int_{0}^{1}\dot F(C_{\tau}^{x + \theta\varepsilon})\partial_xC_{\tau}^{x + \theta\varepsilon}d\theta\right|\\
 & \leqslant &
 \sup_{\theta\in [0,1]}
 K(1 + \|C^{x + \theta\varepsilon}\|_{\infty;T})^N
 |\partial_xC_{\tau}^{x + \theta\varepsilon}|
\end{eqnarray*}
by Assumption \ref{Fhyp}.
\\
\\
On the other hand, since $\theta,\varepsilon\in [0,1]$ :
\begin{equation}\label{PLsin1}
 \|C^{x + \theta\varepsilon}\|_{\infty;T}\leqslant
 \left[(x + 1)^{1 -\beta} + \|\tilde W\|_{\infty;T}\right]^{\gamma + 1}
\end{equation}
and
\begin{equation}\label{PLsin2}
|\partial_xC_{\tau}^{x + \theta\varepsilon}|
\leqslant
x^{-\beta}\left[(x + 1)^{1-\beta} + \|\tilde W\|_{\infty;T}\right]^{\gamma}.
\end{equation}
By Fernique's theorem, the right hand sides of inequalities (\ref{PLsin1}) and (\ref{PLsin2}) belong to $L^p(\Omega)$ for every $p > 0$. Moreover, these upper-bounds are not depending on $\theta$ and $\varepsilon$.
\\
\\
Therefore, by Lebesgue's theorem, $f_{\tau}$ is derivable at point $x$ and,
\begin{displaymath}
\dot f_{\tau}(x) =
x^{-\beta}
\mathbb E\left[e^{-K_e\tau}
\dot F(C_{\tau}^{x})(x^{1-\beta} + \tilde W_{\tau})^{\gamma}\right].
\end{displaymath}
\end{proof}
\noindent
There is probably many ways to use that result in medical treatments. For example, assume that $f_{\tau}(x)$ modelize a part of patient's therapeutic response to the administered drug. Proposition \ref{PLsin} provides a way to minimize the initial dose for an optimal response.
\\
\\
\textbf{Remarks :}
\begin{enumerate}
 \item By the strong law of large numbers, there exists an almost surely converging estimator for that sensitivity.
 \item For any $x > 0$, one can show the existence of a stochastic process $h^x$ defined on $[0,T]$ such that $\dot f_{\tau}(x) = \mathbb E[F(C_{\tau}^{x})\delta(h^x)]$ where, $\delta$ denotes the divergence operator associated to the Gaussian process $W$. Then, $F$ has not to be derivable anymore by assuming that $F\in L^2(\mathbb R_{+}^{*})$. It is particularly useful if $F$ is not continuous at some points.
 \\
 \\
 We don't develop it in that paper because the Malliavin calculus framework has to be introduced before. To understand that idea, please refer to E. Fourni\'e et al. \cite{FLLLT99} in Brownian motion's case and N. Marie \cite{MARIE11}.
\end{enumerate}
%


%

%
\end{document}